\newtheorem{theorem}{Theorem}[section]
\newtheorem{lemma}[theorem]{Lemma}
\newtheorem{assumption}[theorem]{Assumption}
\newtheorem{problem}[theorem]{Problem}
\newtheorem{remark}[theorem]{Remark}
\newtheorem{corollary}[theorem]{Corollary}
\numberwithin{equation}{section}
\renewcommand\paragraph[1]{\@startsection{paragraph}{4}{\parindent}%
{0.1ex \@plus 0.1ex \@minus 0.2ex}
{-1em}
{\normalfont\itshape\normalsize}*{#1.\quad}}
\begin{document}
    \title[Multilevel Stochastic Gradient Descent]
    {\Large Multilevel Stochastic Gradient Descent \\ for Optimal Control Under Uncertainty}
    \author{Niklas Baumgarten}
    \author{David Schneiderhan}
    \date{\today}

    \maketitle

    \let\thefootnote\relax\footnotetext{\texttt{niklas.baumgarten@uni-heidelberg.de} \\ \indent \texttt{david.schneiderhan@kit.edu}}

    \maketitle

    \vspace{-1cm}

    \begin{abstract}
    We present a multilevel stochastic gradient descent method
    for the optimal control of systems governed
    by partial differential equations under uncertain input data.
    The gradient descent method used to find the optimal control leverages
    a parallel multilevel Monte Carlo method as stochastic gradient estimator.
    As a result, we achieve precise control over the stochastic gradient’s bias,
    introduced by numerical approximation, and its sampling error,
    arising from the use of incomplete gradients,
    while optimally managing computational resources.
    We show that the method exhibits linear convergence in the number
    of optimization steps while avoiding the cost of
    computing the full gradient at the highest fidelity.
    Numerical experiments demonstrate that the method significantly
    outperforms the standard (mini-)batched stochastic gradient
    descent method in terms of convergence speed and accuracy.
    The method is particularly well-suited for high-dimensional
    control problems, taking advantage of parallel computing resources
    and a distributed multilevel data structure.
    Additionally, we evaluate and implement different step size strategies,
    optimizer schemes, and budgeting techniques.
    The method’s performance is studied using a two-dimensional
    elliptic subsurface diffusion problem with log-normal
    coefficients and Matérn covariance.
\end{abstract}

    \section{Introduction}\label{sec:introduction}
The state of a physical, technological, or economical process,
as a function of space and time, is often described by partial differential equations (PDEs).
Controlling the state of such processes, for example,
by imposing boundary conditions or external forces,
is of significant interest in all the aforementioned applications.
However, determining the optimal control of a
PDE-governed system is a computationally demanding task,
particularly when the PDE involves high-dimensional, uncertain input data
and the control has to be found with high precision.

Known models to uncertain optimal control problems (OCPs)
focus on minimizing the expected distance between the state and a desired target.
Finding solutions to such problems often requires three key functionalities:
solving the minimization problem, e.g.~by using stochastic gradient descent (SGD) methods;
addressing the uncertainty through sufficient sampling of the input data;
and discretizing the PDEs with finite element (FE) methods.
Approaches which fall under this description can be found
in~\cite{geiersbach2019projected, geiersbach2023nonlinear, geiersbach2020stochastic, martin2021complexity, toraman2023momentum}.

Motivated by this work, we take an integrated approach, combining all three functionalities
in a single algorithm which leverages multilevel variance reduction,
as in multilevel Monte Carlo (MLMC) methods~\cite{cliffe2011multilevel, giles2015multilevel},
and parallel computing resources.
Even though other sampling methods to discretize the input space involve
sparse grids~\cite{kouri2014multilevel, nobile2024combination} and quasi-Monte Carlo
methods~\cite{guth2021quasi, guth2024parabolic},
we based our approach on the budgeted multilevel
Monte Carlo (BMLMC) method~\cite{baumgarten2023fully, baumgarten2024fully, baumgarten2025budgeted},
which provides high-performance and broad applicability.
As a baseline, we consider a step size controlled parallel (mini-)
batched stochastic gradient descent (BSGD) method,
e.g.~used in~\cite{chen2024minibatch, geiersbach2023optimization},
and show that our method improves it significantly in terms of convergence speed,
achievable accuracy, scalability and robustness
(cf.~\Cref{fig:metod-comparison} for a direct comparison).

As a model problem, we consider a two-dimensional elliptic subsurface
diffusion problem with log-normal coefficients (illustrations in~\Cref{fig:log-normal-fields-and-control}),
generated using the memory efficient stochastic PDE sampling technique of~\cite{kutri2024dirichlet}.
We intentionally choose a high-dimensional elliptic problem
to not lay the focus on the PDE, but rather on the algorithm and its properties.
To get the gradients for the SGD, BSGD or the here introduced
multilevel stochastic gradient descent (MLSGD) method,
we solve the adjoint system corresponding to the PDE constraint
as outlined in~\cite{geiersbach2019projected}, also related
to the adjoint Monte Carlo method described in~\cite{caflisch2024adjoint}.

Previous multilevel ideas for OCPs can be found
in~\cite{ali2017multilevel} for pathwise control,
or in~\cite{ciaramella2024multigrid} for sample average approximation (SAA)
assembled into a large linear system
— similar to~\cite{baumgarten2023fully} —
then solved using a multigrid algorithm.
The results in~\cite{guth2023multilevel, van2019robust} provide a foundation
for multilevel gradient estimation of OCPs, e.g.,
then used in a nonlinear conjugate gradient method.
The approaches in~\cite{geiersbach2020stochastic, martin2021complexity}
address the optimization problem using stochastic approximation (SA),
tracing back to~\cite{robbins1951stochastic}, in the form of a SGD method.
Our method embeds the mentioned multilevel gradient estimation in this SGD approach.
In~\cite{noufel2016multilevelstochasticapproximation},
the SA is also extended to a multilevel setting,
considering~\cite{giles2008multilevel},
and a central limit theorem is shown
for a multilevel iteration scheme similar to the one we propose.
From this we draw further motivation to extend the
application from stochastic differential
equations as in~\cite{noufel2016multilevelstochasticapproximation}
to PDE constrained optimization under uncertainty,
and to develop a scalable and adaptive algorithm.
Related ideas can also be found in the context of Bayesian inverse problems,
where the Stein variation gradient descent has also been
extended by multilevel ideas~\cite{weissmann2022multilevel,weissmann2024mean}.
We point out that~\cite{nobile2025multilevel} follows along those lines, too.
Here, the objective functional is decomposed with a telescoping sum
and the results of optimization problems on different levels are determined and combined.
Our approach decomposes the gradient estimation into multiple
Monte Carlo estimators and
includes similar adaptive sampling
strategies as~\cite{beiser2023adaptive}.
Running the algorithm has then the advantage that
little a priori knowledge on the problem
is required by leveraging collected multilevel data
to guide the adaptivity.

Machine learning has been the main driver for the development of novel SGD methods
to enable large-scale training of neural networks,
often striking the balance between per-iteration cost
and expected improvements~\cite{bottou2018optimization}.
Features such as variance (noise) reduction and parallelism through
BSGD~\cite{khirirat2017mini}, adaptive moment estimation (ADAM)~\cite{kingma2014adam},
adaptive step sizes~\cite{koehne2024adaptivestepsizespreconditioned},
or averaging and aggregation schemes~\cite{polyak1992averaging, shamir2013optimalAveragingSGD},
have been successfully applied to various problems.
We take inspiration in these approaches, incorporating them into the proposed MLSGD method for OCPs
and conjecture that the algorithm is applicable in machine learning as well.

In conclusion, recent developments, theory and existing algorithms motivate
the combination of multilevel variance reduction and SGD methods,
however, we have not found an adaptive and parallel method
for high-dimensional control problems yet.
With this paper, we propose new ways to realize such a method
which in particular features:

\paragraph{Algorithmic description and convergence analysis}
Building on the assumptions and notations in~\Cref{sec:assumptions-and-notation},
we present a detailed description of the MLSGD method for OCPs
in~\Cref{alg:bsgd} and~\Cref{alg:mlsgd},
incorporating the standard BSGD method recalled in~\Cref{sec:batched-stochastic-gradient-descent}
and the MLMC estimation from~\Cref{sec:multilevel-monte-carlo}.
In~\Cref{sec:multilevel-stochastic-gradient-descent},
we establish the linear convergence of the proposed method in terms of optimization steps
through \Cref{thm:convergence-mlsgd} and \Cref{cor:convergence-mlsgd}.
This result follows from bounding the error of the gradient estimation through an appropriate
choice of the multilevel batch size (see~\Cref{lem:error-gradient-estimation}).
The method achieves this convergence rate (like an SAA approach)
with a complexity similar to the standard MLMC method,
as it circumvents full gradient evaluations (as done for SA) at the highest level.
As a result, MLSGD improves convergence rates, speed, and accuracy
by adapting the multilevel batches.

\paragraph{Adaptivity, budgeting and error control by resources}
As an extension to the new MLSGD method presented in~\Cref{sec:multilevel-stochastic-gradient-descent},
we incorporate the adaptive step size rule from~\cite{koehne2024adaptivestepsizespreconditioned},
adaptive multilevel batches similar to~\cite{van2019robust},
the optimal distribution of the computational load as in~\cite{baumgarten2025budgeted},
and a posteriori error control with dynamic
programming~\cite{baumgarten2024fully} into~\Cref{alg:bmlsgd}.
We impose the given computational resources,
such as the total available memory and the reserved CPU-time budget,
as additional constraints to the optimization problem resulting in~\Cref{problem:knapsack-ocp}.
The final Budgeted Multilevel Stochastic Gradient Descent (BMLSGD) method
allows for total error control by the given computational resources
through~\Cref{cor:upper-and-lower-bound} and is presented in~\Cref{alg:bmlsgd}.

\paragraph{Numerical experiments with HPC resources}
Lastly, we note that the method is designed for High-Performance Computing (HPC) resources
implemented on a domain- and sample-distributed multiindex data
structure~\cite{baumgarten2025budgeted},
enabling efficient use of resources as in~\cite{baumgarten2024fully}.
While our study focuses on an elliptic model problem,
the algorithm and implementation are designed for broader applicability,
demonstrating parallel scalability through the usage of the FE software
M++~\cite{baumgarten2021parallel}, here used in version~\cite{wieners2025mpp350}.
In our numerical experiments, presented across several sections
(see Sections~\ref{subsec:bsgd-experiments},~\ref{subsec:mlsgd-experiments}
and~\ref{subsec:bmlsgd-experiments}),
we compare all introduced algorithms
(see~Sections~\ref{subsec:bsgd-algorithm}, \ref{subsec:mlsgd-algorithm},
\ref{subsec:adaptivity-and-budgeting-algorithm})
to support the theoretical findings mentioned in the previous paragraphs.
We particularly highlight again~\Cref{fig:metod-comparison} illustrating
the superior performance of BMLSGD over BSGD, and~\Cref{fig:nodes}
which demonstrates that this method also scales well with
increased computational resources.

    \section{Assumptions and Notations}\label{sec:assumptions-and-notation}

The goal of the proposed methodology is to find the optimal control
to a PDE governed system with uncertain input data.
Similar problems are considered for example
in~\cite{geiersbach2020stochastic, guth2023multilevel, martin2021complexity},
presenting theoretical foundation for our approach;
however, the algorithm and the notation are closely related
to~\cite{baumgarten2023fully, baumgarten2024fully, baumgarten2025budgeted}.
The PDE system of interest is defined on a bounded polygonal spatial domain $\cD \subset \RR^d$ with $d \in \set{1, 2, 3}$,
while the uncertainty of the input data is captured by a complete probability space $(\Omega, \cF, \PP)$.
Let further $(V, \sprod{\cdot, \cdot}_V), (W, \sprod{\cdot, \cdot}_W)$ denote Hilbert spaces with
$V\subseteq W \subseteq \rL^2(\cD)$,
where $V$ is an appropriate space for an PDE sample solution.
Lastly, let $\rL^2(\Omega, V)$ and $\rL^2(\Omega, W)$ denote Bochner spaces
containing all $\rL^2$-integrable maps from the probability space
$(\Omega, \cF, \PP)$ to $V$ and $W$, respectively.

    \subsection{Optimization Problem}\label{subsec:optimization-problem}

We search for an admissible control $\bz \in Z$ to an optimal control problem (OCP),
where $Z$ is a non-empty, closed and convex set
\begin{align*}
    Z \coloneqq \set{\bz \in W \colon {\bz_{\mathrm{ad}}^{\mathrm{low}}(\bx)}\leq\bz(\bx)
    \leq {\bz_{\mathrm{ad}}^{\mathrm{up}}(\bx)}, \text{ a.e. }
    \bx \in \cD}
\end{align*}
with $\bz_{\mathrm{ad}}^{\mathrm{low}}, \bz_{\mathrm{ad}}^{\mathrm{up}}\in W$.
The control is found if the distance of some prescribed
target $\bd~\in~W$ to the state solution $\bu \in \rL^2(\Omega, V)$
of the PDE is minimal in expectation.

\begin{problem}[Optimal Control under Uncertainty]
    \label{problem:ocp}
    Given the desired target state $\bd \in W$ and a cost factor $\lambda\geq 0$, find the optimal,
    admissible and deterministic control $\bz \in Z$, such that
    \begin{equation}
        \label{eq:ocp-objective}
        \min_{\bz \in Z} J(\bz)
        \,\,\, \text{with} \,\,\,
        J(\bz) \coloneqq \EE \squarelr{j(\cdot, \bz)}
        \,\,\, \text{and} \,\,\,
        j(\omega, \bz) \coloneqq
        \tfrac{1}{2} \norm{\bu[\omega] - \bd}^2_{W}
        + \tfrac{\lambda}{2} \norm{\bz}_{W}^2
    \end{equation}
    under the constraint that the state $\bu \in \rL^2(\Omega, V)$ is the solution of
    \begin{equation}
        \label{eq:ocp-constraint}
        \cG[\omega] \, \bu(\omega, \bx) = \bz(\bx)
    \end{equation}
    with $\cG[\omega]$ representing a linear uncertain PDE system.
\end{problem}

To ensure a unique solution to~\Cref{problem:ocp}
and to show convergence of the used SGD methods,
we suppose the following conditions on the optimization problem.

\clearpage

\begin{assumption}
    \label{assumption:solvable}
    \, \\[-4mm]
    \begin{enumerate}
        \item The functional $j \colon \Omega \times W \to \RR$ is $\rL^2$-Fréchet differentiable on $W$,
        i.e.,~for every open $\cZ \subset W$ containing $\bz$, there exists a linear operator
        $\cA \colon \Omega \times \cZ \to \cL(W, \RR)$, such that
        \begin{equation*}
            \lim_{W \ni \bh \to 0} \frac{\norm{j(\cdot, \bz + \bh) - j(\cdot, \bz) + \cA(\cdot, \bz)\bh }_{\rL^2(\Omega)}}{\norm{\bh}_W}=0.
        \end{equation*}

        \smallskip

        \item The mapping $\bz \mapsto j(\omega, \bz)$ is strongly $\mu$-convex on
        the admissible set $Z$ for almost every $\omega \in \Omega$,
        implying $\bz \mapsto J(\bz)$ being strongly $\mu$-convex,
        too, i.e., there exists a constant $\mu > 0$,
        such that for all $\bz^{(1)}\!, \bz^{(2)} \in Z$,
        \begin{align}
            \label{eq:strongly-convex}
            \bsprod{\nabla J[\bz^{(1)}]  -  \nabla J[\bz^{(2)}], \, \bz^{(1)}  - \bz^{(2)}}_W
            \geq \mu \bnorm{\bz^{(1)}  - \bz^{(2)}}_W^2 \quad \text{and} \,\,\, \\
            \label{eq:strongly-convex-equivalent}
            J(\bz^{(2)}) \geq J(\bz^{(1)}) + \bsprod{\nabla J[\bz^{(1)}], \, \bz^{(2)}  - \bz^{(1)}}_W
            + \tfrac{\mu}{2} \bnorm{\bz^{(2)} - \bz^{(1)}}_W^2.
        \end{align}

        \smallskip

        \item $\nabla J$ is Lipschitz continuous, i.e.,
        there exists a constant $c_{\mathrm{Lip}} > 0$, such that
        \begin{equation}
            \label{eq:lipschitz-continuity-gradient}
            \bnorm{\nabla J[\bz^{(1)}] - \nabla J[\bz^{(2)}]}_{W} \leq c_{\mathrm{Lip}} \bnorm{\bz^{(1)} - \bz^{(2)}}_W,
            \quad \forall \, \bz^{(1)}\!, \bz^{(2)} \in Z.
        \end{equation}
    \end{enumerate}
\end{assumption}

Under~\Cref{assumption:solvable}, the solution $\bz^* \in Z$ to~\Cref{problem:ocp}
satisfies besides~\eqref{eq:ocp-constraint} also the
variational inequality (cf. \cite{hinze2009optimization})
\begin{equation}
     \label{eq:variational-inequality}
     \bsprod{{\nabla J[\bz^{*}]} , \bz - \bz^*}_W \geq 0 \quad \forall \bz \in Z
\end{equation}
and if $\bz^*\in Z$ with
$\bz_{\mathrm{ad}}^{\mathrm{low}}(\bx) < \bz(\bx) < \bz_{\mathrm{ad}}^{\mathrm{up}}(\bx)$
for a.e.~$\bx \in \cD$, we even get
\begin{equation}
    \label{eq:optimality-condition}
    {\nabla J[\bz^{*}](\bx)}=0 \quad \text{with} \quad \nabla J[\bz] (\bx) = \lambda \bz(\bx) - \EE[\bq](\bx),
\end{equation}
where $\bq \in \rL^2(\Omega, V)$ is the solution of the adjoint PDE system
\begin{equation}
    \label{eq:adjoint-pde}
    \cG^*[\omega] \, \bq(\omega, \bx) = \bd(\bx) - \bu(\omega, \bx)
\end{equation}
and $\cG^*[\omega]$ is the uncertain adjoint system of $\cG[\omega]$.
Note that the equivalent
assumptions~\eqref{eq:strongly-convex} and~\eqref{eq:strongly-convex-equivalent}
are already implied if $\lambda > 0$ and
the operator $\bz \mapsto \bu(\omega)$ is linear.
    \subsection{Approximation}\label{subsec:approximation-problem}

We want to find approximate solutions to~\Cref{problem:ocp},
which involves three main components:
(i)
computing finite element (FE) solutions $\bu_\ell \in V_\ell$
of system~\eqref{eq:ocp-constraint} and $\bq_\ell \in V_\ell$ of
system~\eqref{eq:adjoint-pde} for a fixed $\omega \in \Omega$,
$V_\ell$ denoting a suitable finite element space associated with $V$ at discretization level $\ell$;
(ii)
sampling finite-dimensional representations of stochastic events
$\Omega \ni \omega \mapsto \by_\ell \in V_\ell$ to generate the input data for the PDE systems;
(out of simplicity, we take $V_\ell$ for all functions with a discrete representation,
but remark that different spaces can be chosen as well.
For an illustration of some $\by_\ell \in V_\ell$,
we refer to~\Cref{fig:log-normal-fields-and-control}) and
(iii)
finding the solution of~\eqref{eq:optimality-condition}
using an iterative stochastic approximation indexed by $k \in \NN_0$.
The iteration scheme of a standard SGD method is given
with step sizes $t_k > 0$ by
\begin{equation}
    \label{eq:sgd-iteration}
    \bz^{(k+1)}_\ell \!\leftarrow\! \pi_{Z} \big(\bz_{\ell}^{(k)}
    \!- t_k (\lambda \bz_{\ell}^{(k)} \!- \bq_{\ell}^{(k)}) \big),
    \quad \text{where} \quad
    \pi_{Z} (\bz_\ell) = \argmin_{\bw \in Z} \norm{\bz_\ell - \bw}_{W}
    \vspace{-2mm}
\end{equation}
ensures that the control $\bz_{\ell}^{(k+1)}$ remains in
the admissible discretized space.
This approach, using $\bg_\ell^{(k)} = \lambda \bz_{\ell}^{(k)} \!- \bq_{\ell}^{(k)}$
as stochastic gradient, is motivated by~\cite{geiersbach2019projected},
with the methodology further developed in~\cite{geiersbach2020stochastic},
where the step size $t_k$ is assumed to satisfy
\begin{equation}
    \label{eq:step-size-assumption}
    t_k > 0, \quad \sum_{k=1}^{\infty} t_k = \infty, \quad \sum_{k=1}^{\infty} t_k^2 < \infty,
\end{equation}
to ensure convergence
(cf.~\cite{bottou2018optimization, koehne2024adaptivestepsizespreconditioned}
for further reading on step size control).
This is combined with an adaptive mesh refinement and a step size decay
to control the sampling error and the bias introduced by the FE method.
Here, we approach this by introducing a MLMC estimator for the gradients,
controlling both errors while drastically speeding up the computations
and reducing the variance to make the choice of the step sizes less critical.

To outline all necessary assumptions for this approach,
we consider an increasing sequence of sub-$\sigma$-algebras
$\tset{\cF_k}_{k\in \NN_0}$ of $\cF$ (a filtration),
such that $\bz_\ell^{(k)}$ and $\bz^{(k)}$ are $\cF_k$-measurable.
Since all quantities updated in the optimization depend upon the previous steps,
we introduce with the conditional expectation $\EE_k[\, \cdot \, ]~\coloneqq~\EE[\, \cdot \, | \cF_k]$
(cf. \cite{lord2014computationalSPDEs, pflug1996optimization})
the positive random variable
$\tnorm{\bv^{(k)}}_{\rL^2_k(\Omega, W)}^2 \coloneqq \EE_k \big[\tnorm{\bv^{(k)}}_{W}^2 \big]$
inheriting the properties of the $W$ norm and the space
\begin{align*}
    \rL^2_k(\Omega, W) \coloneqq
    \set{\bv^{(k)} \in \rL^2(\Omega, W) \, \colon \,
    \tnorm{\bv^{(k)}}_{\rL_{{k}}^2(\Omega, W)}^2 < \infty}.
\end{align*}
We remark that the computed control in~\eqref{eq:sgd-iteration}
is a discretized random field
$\bz_{\ell}^{(k)}\in\rL^2_k(\Omega, Z)$ unlike
the true solution $\bz^* \in Z$ to~\Cref{problem:ocp}, which is deterministic.

As a gradient estimator, the MLMC method gives its computational advantage
through the variance reduction of the state and adjoint level differences
\begin{align}
    \label{eq:definition-vell}
    \bv_\ell^{(k)} \! \coloneqq \bu_\ell^{(k)} \! - \rP_{\ell - 1}^{\ell} \bu_{\ell - 1}^{(k)},
    \, \bv_0^{(k)} \! \coloneqq \bu_0^{(k)}
    \text{ and }
    \bp_\ell^{(k)} \! \coloneqq \bq_{\ell}^{(k)} \! - \rP_{\ell - 1}^{\ell} \bq_{\ell-1}^{(k)},
    \, \bp_0^{(k)} \! \coloneqq \bq_{0}^{(k)} \!,
\end{align}
where we made use of a linear isometric projection operator
$\rP_{\ell - 1}^{\ell} \colon V_{\ell - 1} \rightarrow V_{\ell}$
by evaluating the coarser level FE function at the nodes of the finer level
for linear FE spaces.
The assumptions required for the MLMC estimation are summarized below.

\begin{assumption}
    \label{assumption:mlmc}
    The FE approximations of~\eqref{eq:ocp-constraint} and~\eqref{eq:adjoint-pde}
    with mesh diameter $h_\ell = h_0 \, 2^{-\ell}$ satisfy in every optimization step $k$

    \vspace{-2mm}

    \hspace{-7mm}
    \begin{minipage}{0.49\textwidth}
        \begin{subequations}
            \label{eq:assumptions-random-field}
            \begin{align}
                \label{eq:assumption-alpha-u}
                \bnorm{\EE_k[{\bu}_\ell^{(k)} - \bu^{(k)}]}_W \quad\, &\leq u_k h_\ell^{\alpha_{\bu}} \\
                \label{eq:assumption-beta-u}
                \bnorm{\bv_\ell^{(k)} \!\!- \!\EE_k[\bv_\ell^{(k)}]}^2_{\rL_k^2(\Omega, W)} \!\! &\leq v_k h_\ell^{\beta_{\bv}}
            \end{align}
        \end{subequations}
    \end{minipage}
    \begin{minipage}{0.49\textwidth}
        \begin{subequations}
            \label{eq:assumptions-qoi}
            \begin{align}
                \label{eq:assumption-alpha-q}
                \bnorm{\EE_k \big[{\bq}_\ell^{(k)} - \bq^{(k)} \big]}_W \quad\, &\leq q_k h_\ell^{\alpha_{\bq}} \\
                \label{eq:assumption-beta-q}
                \bnorm{\bp_\ell^{(k)} \!\! - \! \EE_k[\bp_\ell^{(k)}]}^2_{\rL_k^2(\Omega, W)} \!\! &\leq p_k h_\ell^{\beta_{\bp}}
            \end{align}
        \end{subequations}
    \end{minipage}

    \vspace{2mm}

    \noindent
    with exponents $\alpha_{\bu}, \alpha_{\bq}, \beta_{\bv}, \beta_{\bp} > 0$
    and $u_k, q_k, v_k, p_k > 0$
    being independent of the discretization level $\ell$.
    The state $\bu^{(k)}$ and the adjoint $\bq^{(k)}$  represent the true solutions to
    the continuous problems~\eqref{eq:ocp-constraint} and~\eqref{eq:adjoint-pde}
    given some control $\bz^{(k)}$.
    We further assume that the computation of the state-adjoint pair $(\bv_\ell^{(k)}, \bp_\ell^{(k)})$
    is bounded with $\gamma_{\rC\rT}, \gamma_{\mathrm{Mem}} > 0$
    and $c_k, m_k > 0$ in its computing-time and memory footprint

    \vspace{-2mm}
    \hspace{-7mm}
    \begin{minipage}{0.48\textwidth}
        \begin{equation}
            \label{eq:assumption-gamma-CT}
            \rC^{\rC\rT} \big(({\bv}_\ell^{(k)}, \bp_\ell^{(k)}) \big) \,\,
            \leq c_k h_\ell^{-\gamma_{\rC\rT}}
        \end{equation}
    \end{minipage}
    \begin{minipage}{0.51\textwidth}
        \begin{equation}
            \label{eq:assumption-gamma-mem}
            \rC^{\mathrm{Mem}} \big((\bv_\ell^{(k)}, \bp_\ell^{(k)})\big)
            \leq m_k h_\ell^{-\gamma_{\mathrm{Mem}}}.
        \end{equation}
    \end{minipage}

    \vspace{0.2cm}

    \noindent Lastly, with $c_\cG, z_k>0$ and $\alpha_{\bz} > 0$ being also independent of $\ell$,
    we suppose
    \begin{equation}
        \label{eq:assumption-alpha-z}
        \bnorm{\bz_\ell^{(k)} - \bz^{(k)}}_{\rL_k^2(\Omega, W)}^2
        \leq c_{\cG} \EE_k \big[\bnorm{\bq_\ell^{(k)} - \bq^{(k)}}_W^2 \big]
        \leq z_k h_{\ell}^{2 \alpha_{\bz}}.
    \end{equation}
\end{assumption}

\begin{remark}
    \label{remark:assumptions}
    \begin{enumerate}
        \item
        \Cref{assumption:mlmc} is based
        on~\cite{baumgarten2025budgeted} and adapted
        to fit to the MLSGD method by extending it to
        every optimization step.

        \item
        The constants $u_k, q_k, v_k, p_k, c_k, m_k$ and $z_k$
        possess $k$-dependence due to changing right-hand sides of~\eqref{eq:ocp-constraint}
        and~\eqref{eq:adjoint-pde} as the optimization runs
        (cf.~arguments of~\cite{barth2011multi, charrier2013finite, teckentrup2013further} for an elliptic PDE).

        \item
        The exponents $\alpha_{\bu}, \alpha_{\bq}, \beta_{\bv}, \beta_{\bp}, \gamma_{\rC\rT}, \gamma_{\mathrm{Mem}}$
        and $\alpha_{\bz}$ are assumed to be independent of $k$ as the regularity
        of the PDE solutions is not expected to change during the optimization.

        \item 
        In the setting of this paper we denote
        $\alpha \coloneq \alpha_\bq, \beta \coloneq \beta_\bp$
        and $\gamma \in \{\gamma_{\rC\rT}, \gamma_{\mathrm{Mem}}\}$.

        \item
        Though assumptions~\eqref{eq:assumption-alpha-u} and~\eqref{eq:assumption-beta-u}
        are not used in theory, we present numerical estimates on them in
        \Cref{subsec:bmlsgd-experiments}

        \item
        The cost bounds~\eqref{eq:assumption-gamma-CT} and~\eqref{eq:assumption-gamma-mem},
        while in~\cite{baumgarten2025budgeted} imposed for full field MLMC estimation,
        also might find justification in machine learning
        through a formulation with respect to the problem size.
        Commonly, computing time budgets and memory constraints
        are among the limiting factors in the training of neural networks.

        \item
        The inequality on the control error~\eqref{eq:assumption-alpha-z}
        is used in~\Cref{lem:error-gradient-estimation}
        to avoid specific assumptions on the
        operators $\cG[\omega]$ and $\cG^*[\omega]$
        as well as the applied discretization schemes.
        This is motivated by
        inserting~\eqref{eq:ocp-constraint} and~\eqref{eq:adjoint-pde}
        into the left-hand side of~\eqref{eq:assumption-alpha-z} and
        thereof, $c_{\cG}$ encodes information about the operators.
        We note that this includes the assumption that $c_{\cG}$
        is independent of the optimization step $k$.

        \item
        By the work
        in~\cite{baumgarten2024fully, baumgarten2025budgeted, collier2015continuation, giles2015multilevel}
        the asymptotic behaviour of the inequalities in~\Cref{assumption:mlmc} can
        be estimated during runtime of the algorithm,
        enabling adaptivity and verification of the assumptions
        as done in~\Cref{subsec:bmlsgd-experiments}.
        Note that for~\eqref{eq:assumption-alpha-z}, we only
        measure the right-hand side indicated as gray dots.
    \end{enumerate}
\end{remark}

    \subsection{Example PDE}\label{subsec:example-problem}

As an example of the system $\cG[\omega]$, we consider the 
elliptic diffusion equation on $W=\rL^2(\cD)$ and $V=\rH^1_0(\cD)$ with log-normal coefficients
and homogeneous Dirichlet boundary conditions
\begin{equation}
    \pdeProblem{
        -\div\big(\exp(\by(\omega, \bx)) \nabla \bu(\omega, \bx) \,\big) &=& \bz(\bx) &\text{on } \,\, \Omega\times\cD \\
        \bu(\omega, \bx) &=& 0 &\text{on } \,\,  \Omega\times \partial \cD \,.
    }\label{eq:elliptic-pde}
\end{equation}
The adjoint operator $\cG^*[\omega]$ of this system is the same as $\cG[\omega]$ due to the problem's symmetry.
This problem will serve as an example for the numerical experiments in this paper
(cf.~\cite{geiersbach2020stochastic, guth2023multilevel} for more analysis);
however, we emphasize that the methodology is not limited to this specific PDE
as we do not rely on any properties other than already stated in the
Sections~\ref{subsec:optimization-problem} and~\ref{subsec:approximation-problem}.
The majority of the functionality used within the presented algorithms has already been used for
hyperbolic PDE systems~\cite{baumgarten2025budgeted, baumgarten2024fully, baumgarten2021parallel}.
Further analysis on other PDEs is also given, for example,
in~\cite{geiersbach2023optimization, guth2024parabolic, toraman2023momentum}.

For our numerical experiments, we consider the unit square $\cD = (0, 1)^2$ as domain,
impose the target state $\bd(\bx) =  \sin(2 \pi x_1) \sin(2 \pi x_2)$,
consider the cost factor $\lambda=10^{-8}$
and set the admissible bounds to $\bz_{\mathrm{ad}}^{\mathrm{low}} \equiv -1000, \bz_{\mathrm{ad}}^{\mathrm{up}}\equiv 1000$.
This leads to $\pi_Z$ having no impact on the presented experiments,
but can be adapted to the setting of~\cite{geiersbach2020stochastic}.
To define the log-normal diffusion coefficient in~\eqref{eq:elliptic-pde},
we consider a Gaussian random field (GRF), denoted $\by(\omega, \bx)$, with
mean-zero and the Matérn covariance function
\begin{align*}
    \Cov(\bx_1, \bx_2) = \frac{\sigma^2}{2^{\nu - 1} \Gamma(\nu)} (\kappa \br)^{\nu} \cK_{\nu}(\kappa \br), \quad
    \br = \norm{\bx_1 - \bx_2}_2, \quad \kappa = \frac{\sqrt{2\nu}}{\lambda_\kappa},
\end{align*}
where $\Gamma$ is the gamma function and $\cK_\nu$ the modified Bessel function of the second kind.
Realizations of the GRFs are computed with SPDE sampling~\cite{lindgren2011explicit},
particularly with the method introduced in~\cite{kutri2024dirichlet}.
Throughout the experiments, we set the parameters
$\sigma^2 = 1.5$, $\nu = 1$ and $\lambda_\kappa = 0.1$.
For two independent realizations $\by_\ell^{(m)}$ of the GRF with different mesh diameters,
we refer to the four plots on the left of \Cref{fig:log-normal-fields-and-control}.
The plots on the right show the computed control $\bz^{(k)}_\ell$ after $k=10$ and $k=100$
iterations of~\eqref{eq:bsgd-iteration}.

We solve all PDE systems
(needed for the Dirichlet-Neumann averaging of~\cite{kutri2024dirichlet}
as well as the state and adjoint equations)
with standard Lagrange linear FE,
geometric V-cycle Jacobi multigrid preconditioning
and CG methods.
We note that a large selection of other solvers and FE spaces,
as explored in~\cite{baumgarten2023fully}, are available and applicable
in the used software~\cite{baumgarten2021parallel} and within the proposed algorithms.

We remark that most numerical experiments are performed
on a single node on the HoreKa supercomputer utilizing $64$ CPUs.
In \Cref{subsec:bmlsgd-experiments}, we present scaling experiments
of moderate size up to $16$ nodes with a total of $1024$ CPUs.
Though the presented numerical results are for two-dimensional domains $\cD$,
the method is designed with three-dimensional domains in mind.
First numerical results for this were already achieved, however,
omitted as this also requires an in-depth discussion of the HPC
techniques and the memory layout which is,
as well as the PDE system,
not the main focus of this paper.

\begin{figure}
    \begin{center}
        \raisebox{0.55in}{\rotatebox{90}{$h_\ell=2^{-7}$}}
        \includegraphics[trim={7cm 1.8cm 7cm 0}, clip, width=0.25\textwidth]{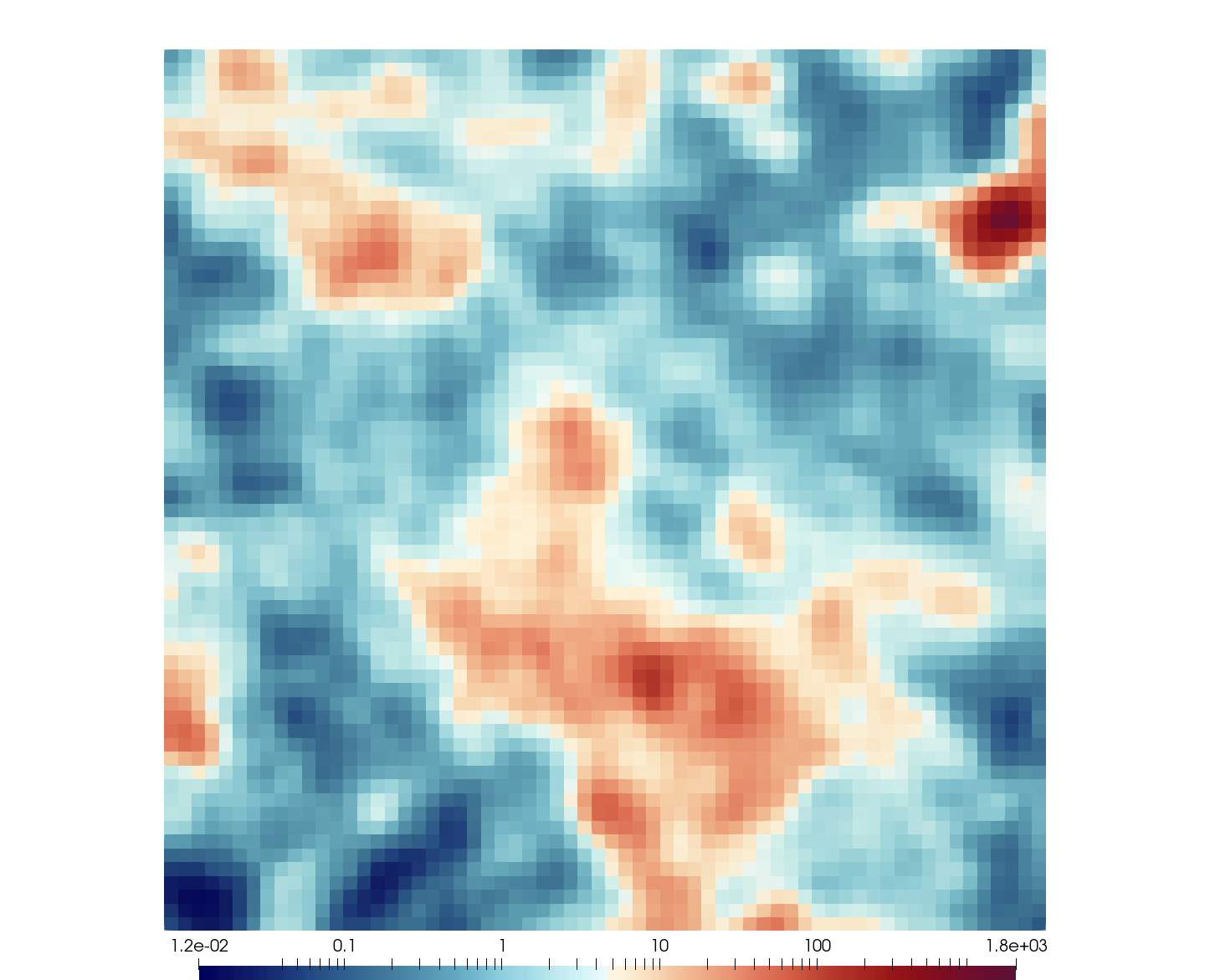}
        \includegraphics[trim={7cm 1.8cm 7cm 0}, clip, width=0.25\textwidth]{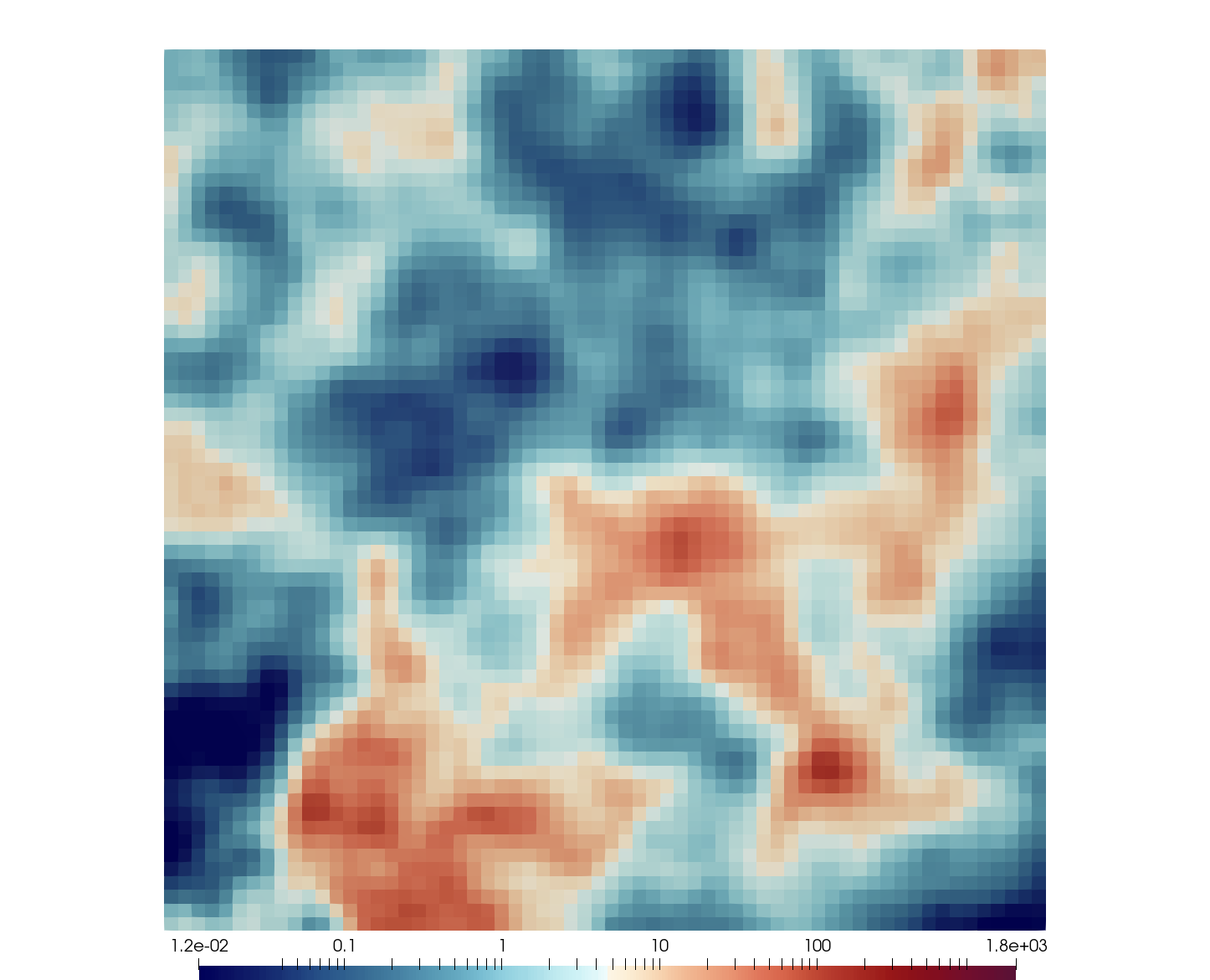}
        \hspace{0.1mm}
        \includegraphics[trim={7cm 1.8cm 7cm 0}, clip, width=0.25\textwidth]{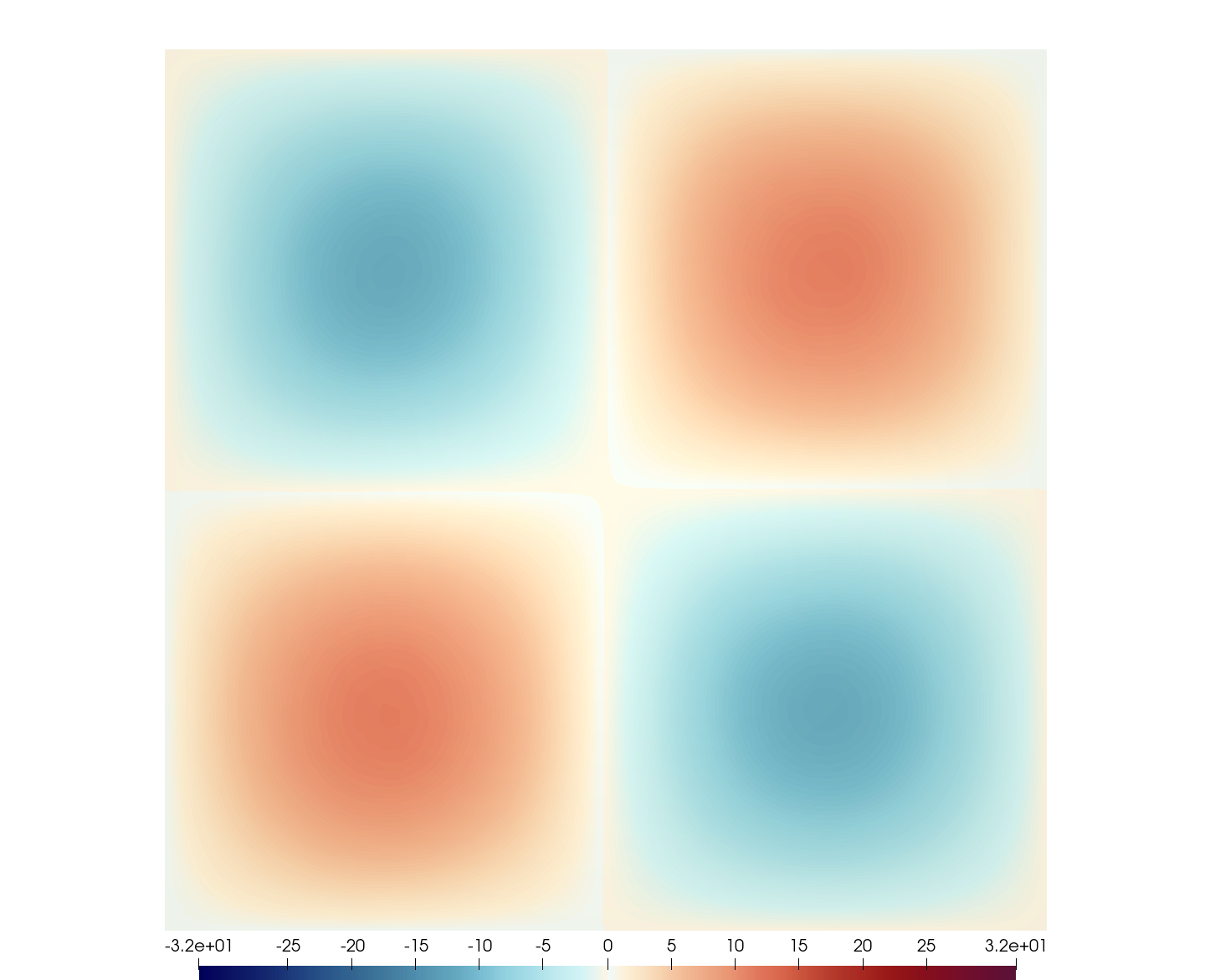}
        \raisebox{1.0in}{\rotatebox{-90}{$k=10$}}

        \vspace*{-1.5mm}

        \raisebox{0.55in}{\rotatebox{90}{$h_\ell=2^{-8}$}}
        \includegraphics[trim={7cm 2cm 7cm 0.4cm}, clip, width=0.25\textwidth]{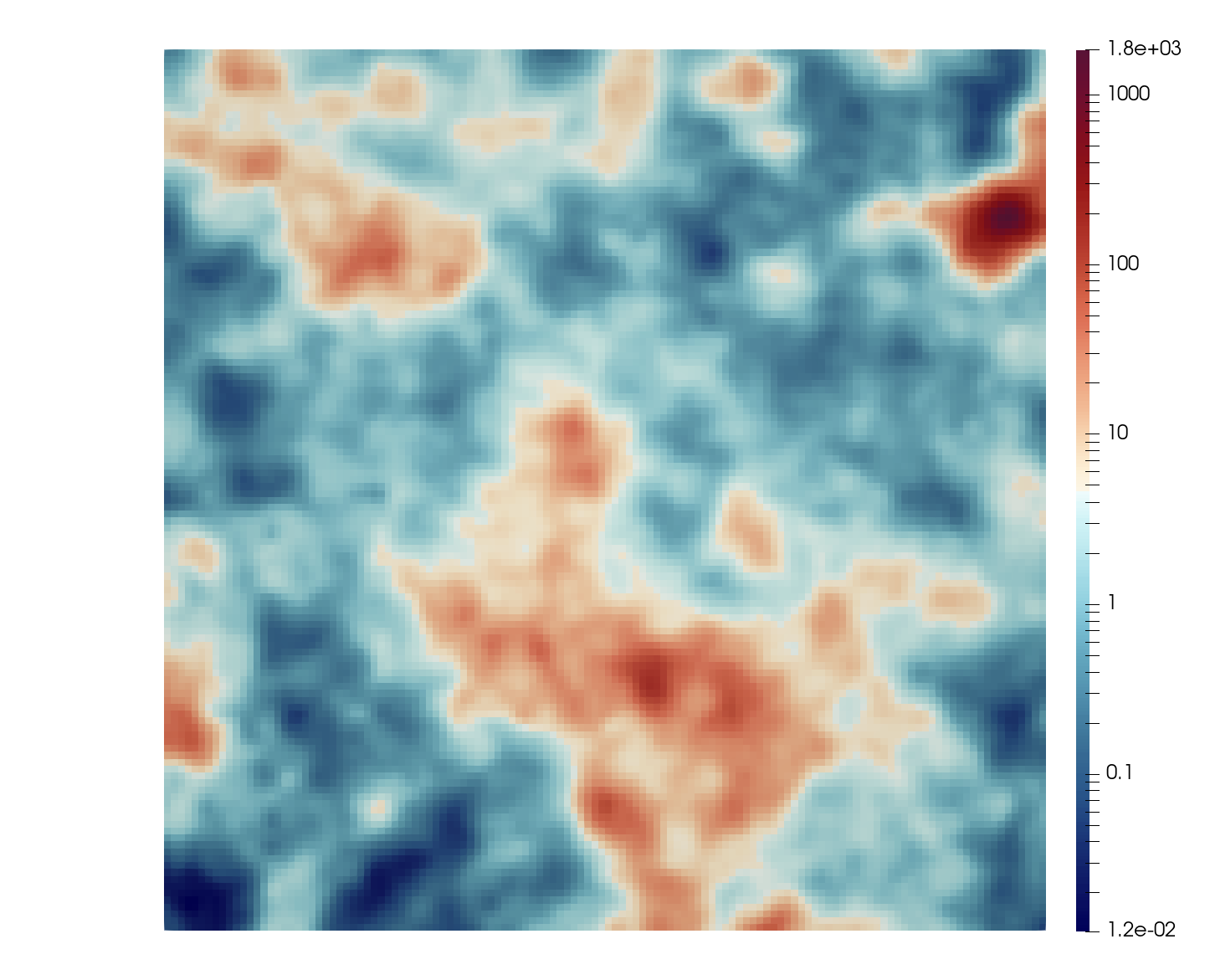}
        \includegraphics[trim={7cm 2cm 7cm 0.4cm}, clip, width=0.25\textwidth]{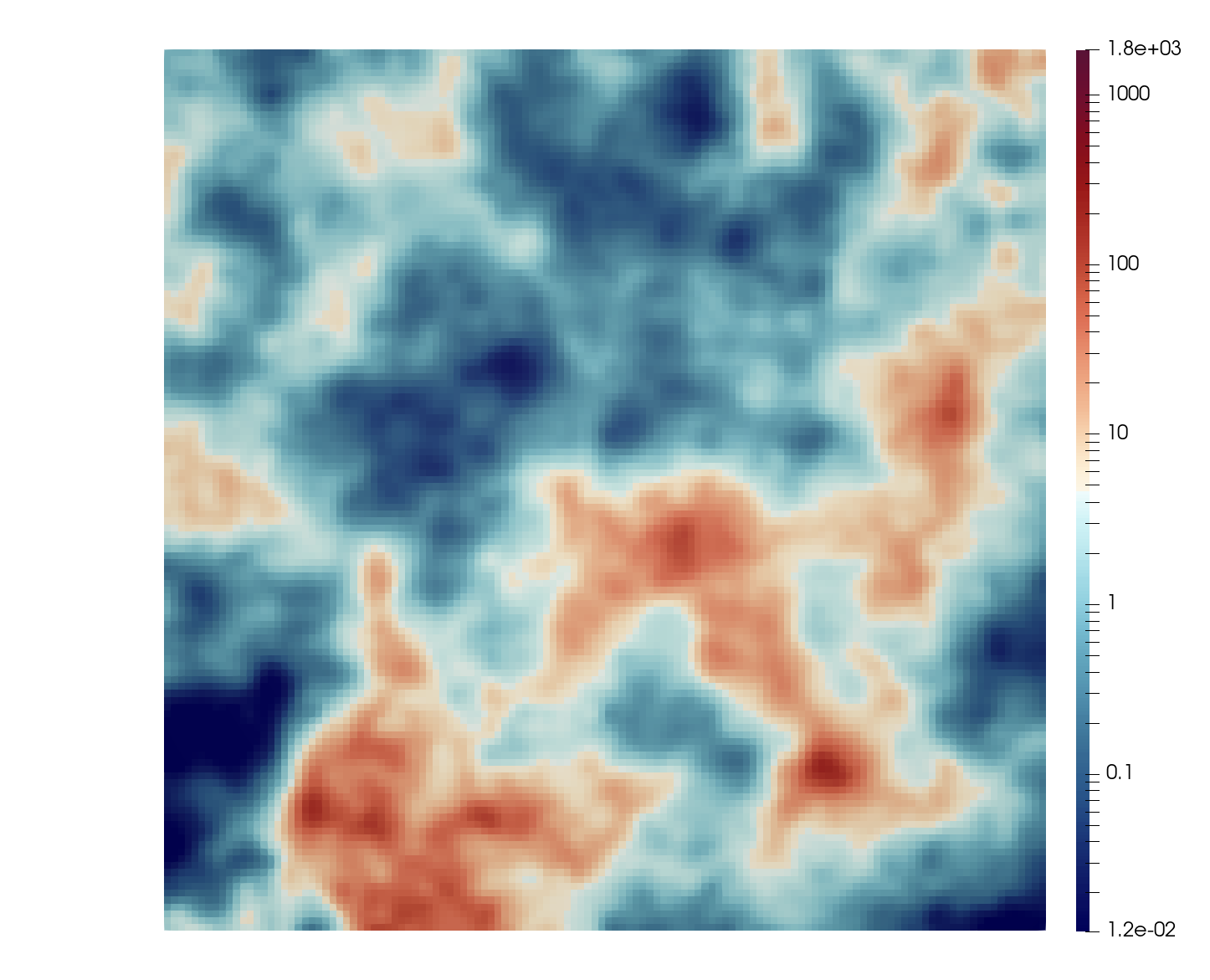}
        \hspace{0.1mm}
        \includegraphics[trim={7cm 2cm 7cm 0.4cm}, clip, width=0.25\textwidth]{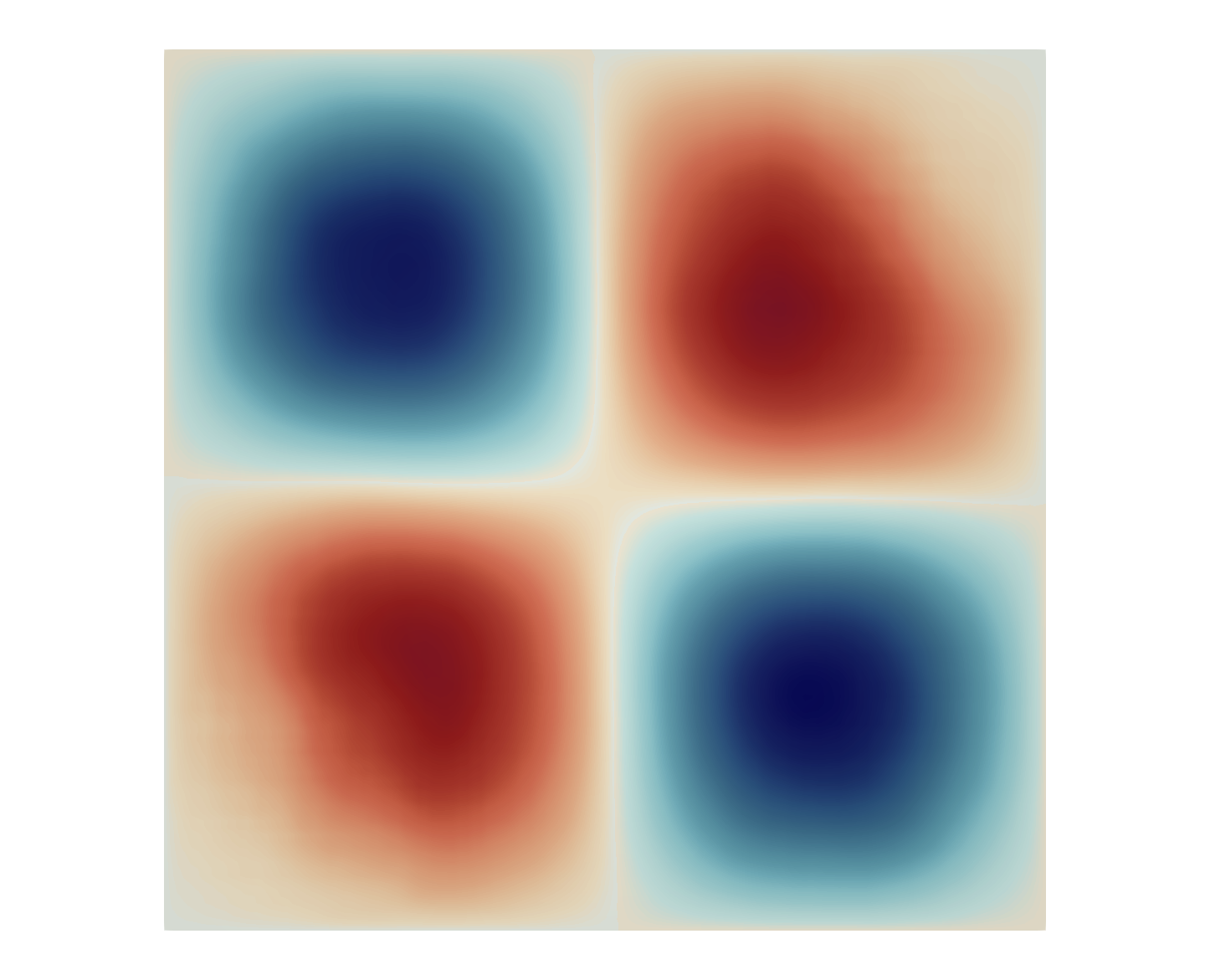}
        \raisebox{1.0in}{\rotatebox{-90}{$k=100$}}
    \end{center}
    \vspace*{-0.2cm}
    \caption{Left to right: Two GRF samples $\by_\ell^{(m)}$ on different mesh diameters;
    Computed control $\bz^{(k)}_\ell$ after $k=10$ and $k=100$ iterations of~\Cref{alg:mlsgd}.}
    \label{fig:log-normal-fields-and-control}
\end{figure}

    \section{Batched Stochastic Gradient Descent}
\label{sec:batched-stochastic-gradient-descent}
As a foundation and baseline for MLSGD, we first introduce the BSGD method.
We begin with an algorithmic description in~\Cref{subsec:bsgd-algorithm},
followed by the first numerical insights
and experiments in~\Cref{subsec:bsgd-experiments},
and conclude with a discussion of its convergence rate
and computational complexity in~\Cref{subsec:bsgd-discussion}.

    \subsection{Algorithm}
\label{subsec:bsgd-algorithm}

At its core, BSGD generates, similar to~\eqref{eq:sgd-iteration}, a minimizing sequence
$\tset{\bz_\ell^{(k)}}_{k=0}^K$ of approximations to find the optimum of~\Cref{problem:ocp}.
We refer to~\Cref{alg:bsgd} for a high-level functional pseudocode generating this sequence.

\begin{algorithm}
    \caption{Batched Stochastic Gradient Descent (BSGD)}
    \begin{align*}
        &\texttt{function BSGD}(\bz_{\ell}^{(0)} \!, \tset{t_k}_{k=0}^{K-1} \!, M) \colon \\[-1mm]
        &\quad
        \begin{cases}
            \texttt{for } k=0,\dots,K-1 \colon \\
            \quad
            \begin{cases}
                E^{\text{MC}}_M[\bq_{\ell}^{(k)}], \, J^{\text{MC}}_M(\bz_{\ell}^{(k)})
                &\!\leftarrow \texttt{BatchEstimation}(\bz_{\ell}^{(k)} \!, M) \\[1mm]
                \hspace{1.75cm} \bz_{\ell}^{(k+1)}
                &\!\leftarrow \pi_{Z}
                \big(\bz_{\ell}^{(k)} - t_k (\lambda \bz_{\ell}^{(k)} - E^{\text{MC}}_M [\bq_{\ell}^{(k)}]) \big)
            \end{cases} \\
            \texttt{return } \bz_{\ell}^{(K)}
        \end{cases} \\[1mm]
        &\texttt{function BatchEstimation}(\bz_{\ell}^{(k)} \!, M) \colon \\[-1mm]
        &\quad
        \begin{cases}
            \texttt{for } m=1,2,\dots,M \colon  \\
            \quad
            \begin{cases}
                \text{// Sampling method for } \omega \mapsto \by^{(m)}_{\ell}
                \text{ cf.~\cite{baumgarten2025budgeted, kutri2024dirichlet} for details} \\
                \by^{(m)}_\ell \hspace{2mm} \leftarrow [\hspace{25mm}]
                \begin{cases}
                    \qquad \qquad \vdots
                \end{cases} \\[3mm]
                \text{// Find state } \bu_{\ell}^{(m, k)} \text{ to control }
                \bz_{\ell}^{(k)} \text{ and realization } \by^{(m)}_\ell \\[1mm]
                \bu_{\ell}^{(m, k)} \leftarrow [\by^{(m)}_\ell, \hspace{10mm} \bz_{\ell}^{(k)}]
                \begin{cases}
                    \texttt{Find } \bu_{\ell}^{(m,k)} \in V_{\ell} \texttt{ such that:} \\
                    \quad \cG_{\ell} [\by^{(m)}_{\ell}] \,\, \bu_{\ell}^{(m, k)} = \bz_{\ell}^{(k)}
                \end{cases} \\[5mm]
                \text{// Find adjoint } \bq_{\ell}^{(m,k)} \text{ to state }
                \bu_{\ell}^{(m, k)} \text{ and realization } \by^{(m)}_\ell \\[1mm]
                \bq_{\ell}^{(m,k)} \leftarrow [\by^{(m)}_{\ell}, \bd - \bu_{\ell}^{(m, k)}]
                \begin{cases}
                    \texttt{Find } \bq_{\ell}^{(m,k)} \in V_{\ell} \texttt{ such that:} \\
                    \quad \cG^{*}_{\ell} [\by^{(m)}_{\ell}] \,\, \bq_{\ell}^{(m, k)} = \bd - \bu_{\ell}^{(m, k)}
                \end{cases}
            \end{cases} \\ \\[-2mm]
            \text{// Return result of estimators defined in \eqref{eq:bsgd-iteration} and \eqref{eq:ocp-objective-estimation}} \\
            \texttt{return } E^{\text{MC}}_M [\bq_{\ell}^{(k)}], \,\,  J^{\text{MC}}_M(\bz_{\ell}^{(k)})
        \end{cases}
    \end{align*}
    \label{alg:bsgd}
\end{algorithm}

\paragraph{BSGD function}
The algorithm starts in the \texttt{BSGD} function
taking an initial guess $\bz_{\ell}^{(0)}$,
an appropriate step size rule $\tset{t_k}_{k=0}^{K-1}$
(here directly given as a sequence of length $K$, which may be replaced by a function)
and the batch size $M$ as inputs.
The sequence $\tset{\bz_\ell^{(k)}}_{k=0}^K$ is generated by iteratively
solving~\eqref{eq:optimality-condition}
with estimates to $\EE_k[\bq^{(k)}] \in {\rL^2_k(\Omega, V)}$
\begin{equation}
    \label{eq:bsgd-iteration}
    \bz^{(k+1)}_\ell \!\leftarrow\! \pi_{Z} \big(\bz_{\ell}^{(k)}
    \!- t_k (\lambda \bz_{\ell}^{(k)} \!- E^{\text{MC}}_M[\bq_{\ell}^{(k)}]) \big) \\
    \,\,\, \text{with} \,\,\,
    E_{M}^{\text{MC}}[\bq_{\ell}^{(k)}] \coloneqq
    \frac{1}M \sum_{m=1}^M \bq_\ell^{(m, k)}.
\end{equation}
Here, we represent the batch estimation as a MC method using $M$
independent and identically distributed samples of
the approximated adjoint solutions $\bq_{\ell}^{(m, k)}$.
These samples are generated within the \texttt{BatchEstimation} function
called in each optimization step.

\paragraph{Batch estimation function}
Every call to \texttt{BatchEstimation} takes the current control
$\bz_{\ell}^{(k)}$ and the batch size $M$ as input.
Then, $M$ independent realizations of $\by_\ell^{(m)}$ are drawn
and used as input, together with the fixed $\bz_{\ell}^{(k)}$,
to approximate the solution of the systems~\eqref{eq:ocp-constraint} and~\eqref{eq:adjoint-pde}.
This computation can be fully parallelized for all samples,
provided sufficient memory and processing resources are available.
The function returns the Monte Carlo estimate of the adjoint solution,
$E^{\text{MC}}_M[\bq_{\ell}^{(k)}]$, to be used in~\eqref{eq:bsgd-iteration},
as well as an estimate of the objective~\eqref{eq:ocp-objective}
\begin{equation}
    \label{eq:ocp-objective-estimation}
    J^{\text{MC}}_M(\bz_{\ell}^{(k)}) \coloneqq \frac{1}{M} \sum_{m=1}^M \tfrac{1}{2} \bnorm{\bu_{\ell}^{(m, k)} - \bd}^2_{W}
    + \tfrac{\lambda}{2} \bnorm{\bz_{\ell}^{(k)}}_{W}^2
\end{equation}
using the state approximations $\bu_{\ell}^{(m, k)}$.
In machine learning, this is often referred to as
empirical risk~\cite{bottou2018optimization},
which we use here to experimentally monitor convergence
(see Subsections~\ref{subsec:bsgd-experiments},~\ref{subsec:mlsgd-experiments}, and~\ref{subsec:bmlsgd-experiments}).
However, in the convergence analysis, we prefer using $J(\bz_\ell^{(k)})$
to avoid introducing an additional estimation error.
Note that we also use $J(\bz^{(k)})$, for instance in~\Cref{lem:error-gradient-estimation},
to express the objective~\eqref{eq:ocp-objective} for a non-spatially discretized
control $\bz^{(k)} \in \rL^2_k(\Omega, Z)$ of~\eqref{eq:ocp-constraint},
which is not spatially discretized but still subject to stochastic approximation.

    \subsection{Experiments} \label{subsec:bsgd-experiments}

To convey the functionality of the BSGD method
and its implementation in M++~\cite{baumgarten2021parallel},
we present the first numerical experiments to evaluate
the method and to motivate potential improvements.

\paragraph{Investigation of the batch size}
We examine the influence of a variable batch size $M$ on a grid with mesh diameter $h_\ell = 2^{-7}$
combined with the constant step size $t_k \equiv 100$ and the initial control $\bz^{(0)}\equiv 0$.
\Cref{fig:batch-size-mc} illustrates two plots:
the left one presents estimates of $J^{\text{MC}}_M(\bz_{\ell}^{(k)})$
according to~\eqref{eq:ocp-objective-estimation} over the iteration $k$,
while the right plot shows evaluations of the gradient
used in~\eqref{eq:bsgd-iteration} in the $\rL^2$-norm
$\tnorm{E^{\text{MC}}_M [\bg_{\ell}^{(k)}]}_{\rL^2(\cD)}$
plotted against the total computing time.
A higher noise level in $J^{\text{MC}}_M(\bz_{\ell}^{(k)})$
is observed for smaller batch sizes $M$,
yet all methods oscillate around the same value.
The impact of the batch size on the gradient norm is even more pronounced:
the optimality condition~\eqref{eq:optimality-condition}
is visibly better satisfied with larger batches,
but this comes at the cost of an
increased computational effort.
Beyond this, a smoothed descent and arguably better convergence
rates are observed for larger batches.
Thus, if high accuracy is desired, larger batches are more reliable
in achieving the result.
However, as soon as we impose a tight limit on the computational cost,
the choice for the smaller batch might be preferable.

\begin{figure}
    \begin{center}
        \includegraphics[width=0.8\textwidth]{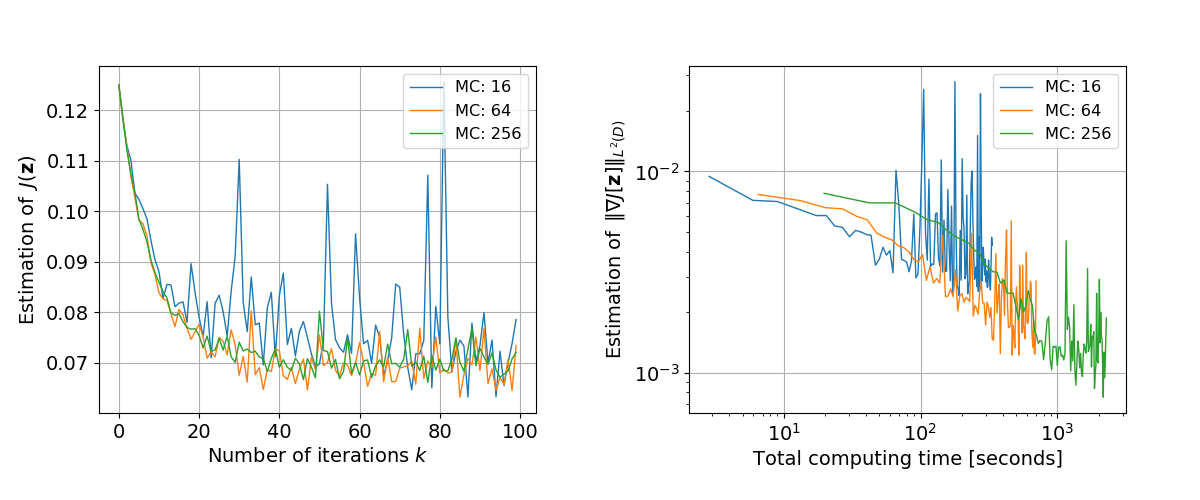}
    \end{center}

    \vspace{-0.4cm}

    \caption{Comparison of different batch sizes $M$.}
    \label{fig:batch-size-mc}
\end{figure}

\paragraph{Further experiments}
Running~\Cref{alg:bsgd} requires the choice of
several hyperparameters and adaptions to
the given computational environment.
This includes, besides the batch size $M$,
the appropriate choice of the discretization level $\ell$,
the step size $t_k$, the number of optimization steps $K$,
the parallelization strategy, and further configurations
of the involved numerical solvers and discretization schemes.
We omit a detailed presentation of our numerical
experiments conducted on this,
and refer to~\Cref{sec:adaptivity-and-budgeting}
where we develop an adaptive algorithm determining
several of the hyperparameters automatically.
Before we turn to this, we recall
the convergence and complexity theory of BSGD methods.

    \subsection{Properties and Discussion}
\label{subsec:bsgd-discussion}

As just experimentally observed in~\Cref{subsec:bsgd-experiments},
the batch size $M$ is crucial,
$M$ being small leads to a high variance in the gradient estimate,
while $M$ being large increases the computational cost.
This leads to a trade-off between the per-iteration cost
and expected per-iteration improvements.
To navigate this trade-off, we recall the convergence
properties of SAA and SA methods,
here distinguished through
the batch size $M^{\text{SAA}} \gg M^{\text{SA}}$.

\paragraph{Convergence and complexity}
Assuming strong $\mu$-convexity~\eqref{eq:strongly-convex}, {$\bz^* \in Z$}
satisfying~\eqref{eq:optimality-condition} a.e.~in $\cD$
and an appropriate choice for $t_k$ (e.g.~satisfying~\eqref{eq:step-size-assumption}),
an SGD method (including mini-batches of size $M^{\text{SA}}$) converges at a sub-linear rate
(cf.~\cite[Sec.~3.3.]{bottou2018optimization})
\begin{equation}
    \label{eq:sub-linear-convergence-bsgd}
    {\EE \big[}J(\bz^{(K)}_{\ell}) - J(\bz^*) {\big]} \leq \cO(K^{-1})
    \,\, \text{ at cost } \,\,
    \rC_{\epsilon}^{\text{SA}} \lesssim M^{\text{SA}} h_\ell^{-\gamma} \epsilon^{-1}.
\end{equation}
Here, $h_\ell$ is assumed to be sufficiently small
leading to a batch cost of $h_\ell^{-\gamma} M^{\text{SA}}$.
$\rC_{\epsilon}^{\text{SA}}$ represents the cost
required to reach an accuracy of at least $\epsilon > 0$
and $h_\ell^{-\gamma}$ denotes the cost per sample,
as assumed in~\eqref{eq:assumption-gamma-CT} and~\eqref{eq:assumption-gamma-mem}.

On the other hand, if the batch size is large enough to fall within the SAA regime,
classical optimization methods (e.g., conjugate gradient descent, BFGS, \dots)
achieve at least linear convergence.
Thus, for some $\rho \in (0, 1)$, we get
(cf.~\cite[Sec.~3.3.]{bottou2018optimization})
\begin{equation}
    \label{eq:linear-convergence-bsgd}
    \EE \big[J(\bz^{(K)}_{\ell}) - J(\bz^*) {\big]} \leq \cO(\rho^{K})
    \,\, \text{ at cost } \,\,
    \rC_{\epsilon}^{\text{SAA}} \lesssim M^{\text{SAA}} h_\ell^{-\gamma} \log (\epsilon^{-1})
\end{equation}
to reach an accuracy $0 < \epsilon < 1$.
Thus, SAA methods offer superior convergence rates compared to SA methods.
However, since $M^{\text{SAA}} \gg M^{\text{SA}}$,
each optimization step in SAA is significantly more expensive.
In cases where the input data exhibits high approximate similarity,
SA methods are in practice more efficient~\cite{bottou2018optimization}.
But, once the optimization error falls within the same order as the approximate similarity,
SAA methods become preferable, as SA methods can no longer guarantee an improvement in expectation.
We refer again to~\Cref{fig:batch-size-mc}
where aspects of this discussion can be observed, too.

\paragraph{Adaptivity and step sizes}
To combat the increased variance of SA methods,
but also to reduce the total computational cost of SAA methods,
adaptive batch sizes
(cf.~{\cite{chen2024minibatch, geiersbach2023optimization, van2019robust, beiser2023adaptive}} for optimal control),
adaptive step sizes~\cite{koehne2024adaptivestepsizespreconditioned}
and the ADAM optimizer~\cite{kingma2014adam} have been proposed,
giving more robust paths of iterates $\bz_\ell^{(k)}$,
objectives $J^{\text{MC}}_M(\bz_{\ell}^{(k)})$
and gradient evaluations.
This also enables a stopping criterion
other than a maximum number of optimization steps $K$.
We also note that convergence can still be shown for the convex case,
by considering the averaging scheme of~\cite{polyak1992averaging},
for details see~\cite{geiersbach2020stochastic}.
This also requires a decreasing or sufficiently small sequence of $t_k$,
which has to be chosen carefully to ensure convergence.

    \section{Multilevel Monte Carlo}\label{sec:multilevel-monte-carlo}
As a remedy to the observed challenges in the previous section,
we propose the use of an MLMC estimator to determine
the expected discretized adjoint solution in each optimization step $k$.
Expanding $\EE_k[\bq_L^{(k)}]$ in a telescoping sum over discretization levels $\ell=0, \dots, L$ of the FE mesh,
inserting equation~\eqref{eq:definition-vell}
and using the isometric transfer operators
$\rP_{\ell}^L \colon V_\ell \rightarrow V_L$,
constructed through $\rP_{\ell}^L = \rP_{L-1}^L \dots \rP_{1}^2 \rP_{0}^1$, gives
\begin{align*}
    \EE_k[\bq_L^{(k)}] = \rP_{0}^L \EE_k[\bq^{(k)}_0]
    + \sum_{\ell=1}^{L} \rP_{\ell}^L \EE_k[\bq^{(k)}_{\ell} - \rP_{\ell-1}^\ell \bq^{(k)}_{\ell-1}]
    = \sum_{\ell=0}^L \EE_k[\bp^{(k)}_{\ell}]
    \, \approx \, \EE_k[\bq^{(k)}].
\end{align*}
This motivates the MLMC estimator (cf.~\cite{baumgarten2025budgeted} for details in similar notation)
\begin{equation}
    \label{eq:mlmc-estimator}
    E^{\text{ML}}[\bq_L^{(k)}] \coloneqq \sum_{\ell=0}^L \rP_{\ell}^{L} E^{\text{MC}}_{M_\ell}[\bp_\ell^{(k)}]
    \quad \text{with} \quad
    E^{\text{MC}}_{M_\ell}[\bp_\ell^{(k)}] \coloneqq \frac{1}{M_\ell} \sum_{m=1}^{M_\ell} \bp_\ell^{(m, k)},
\end{equation}
where $\set{M_\ell}_{\ell=0}^L$ denotes the number of samples on each level.
Note that for $\ell \neq 0$ the difference in~\eqref{eq:definition-vell}
is the same sample computed on two different levels, i.e.,
$\bp_\ell^{(m,k)} = \bq_\ell^{(m,k)} - \rP_{\ell-1}^{\ell} \bq_{\ell-1}^{(m,k)}$
(cf.~\Cref{fig:log-normal-fields-and-control} for an illustration of the same input data on two different levels).
The mean squared error in the above approximation is expressed by
(cf.~e.g.~\cite[Theorem 3.1]{bierig2015convergence} for a similar setting)
\begin{equation}
    \label{eq:mse-mlmc}
    \mathrm{err}^{\mathrm{MSE}}_k \big(E^{\text{ML}}[\bq_L^{(k)}]\big)
    = \underbrace{\sum_{\ell=0}^L M_{\ell}^{-1}
        \bnorm{{\bp_{\ell}^{(k)} - \EE_k[\bp_\ell^{(k)}]}}^2_{\rL^2_k(\Omega, W)}}_{\eqqcolon {\err}_k^{\mathrm{sam}}}
    + \,{\underbrace{\bnorm{\EE_k[\bq_{L}^{(k)} - \bq^{(k)}]}_W^2}_{\eqqcolon {\err}_k^{\mathrm{num}}}}.
\end{equation}
If~\eqref{eq:assumption-alpha-q} and~\eqref{eq:assumption-beta-q} are satisfied, we can control both
the sampling error ${\err}^{\mathrm{sam}}_k$ and the numerical error ${\err}^{\mathrm{num}}_k$,
either by increasing the level-dependent number of samples $M_\ell$ or by refining the mesh.
Naturally, both measures come at an increased computational cost captured
by~\eqref{eq:assumption-gamma-CT} and~\eqref{eq:assumption-gamma-mem}.
However, the multilevel structure allows for the construction of a sample sequence $\set{M_\ell}_{\ell=0}^L$,
such that the computational cost $\rC_\epsilon$ (measured in memory or computing-time)
to reach a desired accuracy of $0 < \epsilon < \re^{-1}$ is bounded by
(cf.~\cite[Theorem 1]{cliffe2011multilevel} and~\cite[Theorem 3.1]{giles2008multilevel})
\begin{equation}
    \label{eq:mlmc-complexity}
    \sqrt{\err^{\mathrm{MSE}} \big(E^{\text{ML}}[\bq_L^{(k)}]}\big) \overset{!}{<} \epsilon
    \quad \Rightarrow \quad
    \rC_\epsilon \big(E^{\text{ML}}[\bq_L^{(k)}]\big) \lesssim
    \begin{cases}
        \epsilon^{-2} & \beta > \gamma, \\
        \epsilon^{-2} (\log(\epsilon))^2 & \beta = \gamma, \\
        \epsilon^{-2-(\gamma-\beta)/\alpha} & \beta < \gamma.
    \end{cases}
\end{equation}
This stands in contrast to the combination of FE with standard MC methods which has the cost of
$\rC_\epsilon \big({E^{\text{MC}}_M [\bq_L^{(k)}]}\big) \lesssim \epsilon^{-2- \gamma/\alpha}$,
giving at least an improvement of $\beta/\alpha$ in the rate
for the same accuracy~\cite{cliffe2011multilevel}.
The central idea of this paper is to leverage this improvement for the batch estimation within an SGD method.
Before we outline this approach in further detail in~\Cref{sec:multilevel-stochastic-gradient-descent},
we briefly state how to estimate $\err^{\mathrm{sam}}$ and $\err^{\mathrm{num}}$.
This is done by a posteriori error estimators motivated in~\cite{giles2015multilevel}
via regression and extrapolation arguments.
In~\cite{baumgarten2025budgeted} the adaption to full field estimates is presented
\begin{equation}
    \label{eq:squared-bias-estimate-mlmc}
    \widehat{\err}^{\text{num}}_k \! \coloneqq \!\!
    \max_{\ell = 1, \dots, L} \!\!
    \roundlr{\!\frac{{\|E_{M_{\ell}}^{\text{MC}}[\bp_{\ell}^{(k)}]\|_W}}{(2^{\widehat{\alpha}} - 1) 2^{{\widehat{\alpha} (L - \ell)}}}\!}^2
    \!\text{with} \,\,
    \argmin_{(\widehat{\alpha}, \widehat{c})} \sum_{\ell=1}^L
    \! \big( \! \log_2 {\|E_{M_{\ell}}^{\text{MC}}[\bp_{\ell}^{(k)}]\|_W} \! + \widehat{\alpha} \ell - \widehat{c} \big)^2.
\end{equation}
Thus $\widehat{\alpha}$ is determined by regression
and $\widehat{\err}^{\text{num}}_k$ follows from extrapolation arguments.
The estimation of the sampling error is similar to computing a sample variance, i.e.,
\begin{equation}
    \label{eq:sampling-error-estimator}
    \widehat{\err}^{\mathrm{sam}}_k \coloneqq \sum_{\ell=0}^L \frac{s^2_{\ell}[\bp_\ell^{(k)}]}{M_{\ell} (M_{\ell} - 1)}
    \,\,\,\,
    \text{with}
    \,\,\,\,
    s^2_{\ell}[\bp_\ell^{(k)}] \coloneqq \sum_{m=1}^{M_{\ell}}
    \bnorm{\bp_{\ell}^{(m,k)} - E^{\text{MC}}_{M_{\ell}}[\bp_{\ell}^{(k)}]}_W^2.
\end{equation}

    \section{Multilevel Stochastic Gradient Descent}
\label{sec:multilevel-stochastic-gradient-descent}

Based on the presented MLMC and BSGD method,
we now introduce one main contribution of this work,
the Multilevel Stochastic Gradient Descent (MLSGD) method.
We outline the algorithm in~\Cref{subsec:mlsgd-algorithm},
present the first experimental results in~\Cref{subsec:mlsgd-experiments}
and analyze its convergence and complexity in~\Cref{subsec:mlsgd-analysis}.

    \subsection{Algorithm}
\label{subsec:mlsgd-algorithm}

To explain~\Cref{alg:mlsgd}, we first note its similarity
to the previously discussed BSGD method outlined in~\Cref{alg:bsgd}
and draw on knowledge from~\Cref{sec:multilevel-monte-carlo}
to explain the differences and advantages.
Again, we follow a function-wise explanation of the algorithm.

\paragraph{The MLSGD function}
The \texttt{MLSGD} function serves as the entry point to~\Cref{alg:mlsgd} and
accepts an initial guess for $\bz_{L}^{(0)}$, an appropriate sequence of step sizes $\tset{t_k}_{k=0}^{K-1}$,
and a multilevel batch $\tset{M_\ell}_{\ell=0}^L$ as input arguments.
Similar to~\Cref{alg:bsgd}, the MLSGD method iteratively optimizes the control $\bz_{L}^{(k)}$.
However, instead of using~\eqref{eq:bsgd-iteration}, we employ the iteration scheme
\begin{equation}
    \label{eq:mlsgd-iteration}
    \bz_{L}^{(k+1)} \leftarrow \pi_{Z} \big(\bz_{L}^{(k)} - t_k E^{\text{ML}}[\bg^{(k)}_{L}] \big)
    \quad \text{with} \quad
    E^{\text{ML}}[\bg_{L}^{(k)}] \coloneqq \lambda \bz_{L}^{(k)} - E^{\text{ML}}[\bq_{L}^{(k)}]
\end{equation}
to approximate the solution to~\Cref{problem:ocp}.
We note that the gradient estimation $E^{\text{ML}}[\bg_{L}^{(k)}]$
is now computed using $E^{\text{ML}}[\bq_{L}^{(k)}]$,
as given through~\eqref{eq:mlmc-estimator}.

\paragraph{Multilevel batch estimation}
To compute the adjoint $E^{\text{ML}}[\bq_{L}^{(k)}]$,
the function \texttt{MultiLevelEstimation} is called in each optimization step,
taking the current control $\bz_{L}^{(k)}$ and a multilevel batch $\tset{M_\ell}_{\ell=0}^L$ as input arguments.
By~\eqref{eq:definition-vell}, the adjoint and the state are computed
with the \texttt{BatchEstimation} function, as in~\Cref{alg:bsgd},
on the lowest level $\ell=0$.
For the higher levels $\ell=1,\dots,L$, the estimation is performed
for the level pair $(\ell, \ell-1)$ using the function \texttt{LevelPairEstimation}.
This also motivates the definitions of $\by_{\ell, \ell-1}^{(m, k)}$, $\bu_{\ell, \ell-1}^{(m, k)}$
and $\bq_{\ell, \ell-1}^{(m, k)}$ in the top row of~\Cref{alg:mlsgd}.

\paragraph{Level pair batch estimation}
Similar to the \texttt{BatchEstimation} function in~\Cref{alg:bsgd},
the \texttt{LevelPairEstimation} function approximates the
state~\eqref{eq:ocp-constraint} and adjoint~\eqref{eq:adjoint-pde} systems,
taking the current control $\bz_{L}^{(k)}$, the level-specific batch size $M_{\ell}$,
and the level $\ell$ as input arguments.
Since the control $\bz_{L}^{(k)}$ is always stored at the highest level $L$,
the first step is to restrict it to levels $\ell$ and $\ell-1$
with the restriction operators $\rR^{\ell}_L \colon V_L \rightarrow V_\ell$
and $\rR^{\ell-1}_L \colon V_L \rightarrow V_{\ell-1}$, respectively.
Additionally, the control must be distributed across the
multiple processes executing the batch loop in parallel.
We omit the details on the parallelization and refer
to~\cite{baumgarten2024fully, baumgarten2025budgeted}
as well as to a short discussion in~\Cref{subsec:mlsgd-experiments}.
Having the distributed and restricted controls $\bz_{\ell, \ell-1}^{(k)}$,
we can approximate the state and adjoint on both levels $\ell$ and $\ell-1$,
taking the realizations $\by_{\ell, \ell-1}^{(m, k)}$ as input.
Finally, the function returns with
$\bp_{\ell}^{(m,k)} = \bq_{\ell}^{(m,k)} - \rP_{\ell-1}^\ell \bq_{\ell-1}^{(m,k)}$
the MC estimate of the adjoint level difference as in~\eqref{eq:mlmc-estimator},
and with $\rQ_{\ell}^{(m, k)} \coloneqq \tfrac{1}{2} \bnorm{\bu_{\ell}^{(m, k)} - \bd}^2_{W}$,
$\rY_{\ell}^{(m, k)} \coloneqq \rQ_{\ell}^{(m, k)} - \rQ_{\ell-1}^{(m, k)}$ for $\ell \geq 1$
and $\rY_{0}^{(m, k)} \coloneqq \rQ_{0}^{(m, k)}$ for $\ell = 0$ the estimate
\begin{equation}
    \label{eq:ocp-objective-level-pair-estimation}
    J^{\text{MC}}_{M_\ell}(\bz_{\ell, \ell-1}^{(k)})
    \coloneqq \frac{1}{M_\ell} \sum_{m=1}^{M_\ell} \rY_{\ell}^{(m, k)}.
\end{equation}
This enables the estimation for~\eqref{eq:ocp-objective},
then returned by the \texttt{MultiLevelEstimation} function
\begin{equation}
    \label{eq:ocp-objective-multilevel-estimation}
    J^{\text{ML}}(\bz_{L}^{(k)})
    \coloneqq \sum_{\ell=0}^L J^{\text{MC}}_{M_\ell}(\bz_{\ell, \ell-1}^{(k)})
    + \tfrac{\lambda}{2} \bnorm{\bz_{L}^{(k)}}_{W}^2.
\end{equation}

In conclusion,~\Cref{alg:mlsgd} utilizes a MLMC method
as replacement for the batch estimator.
Care has to be taken to ensure that the control is distributed and restricted
to the correct data structures; and that the level pairs are solved
with the same input realizations.
The overall algorithm, however, is still strongly related to the BSGD method.

\begin{algorithm}
    \caption{Multilevel Stochastic Gradient Descent (MLSGD)}
    \label{alg:mlsgd}
    \begin{align*}
        &\texttt{def }
        \by_{\ell, \ell - 1}^{(m)} \coloneqq (\by^{(m)}_{\ell} \!\!, \by_{\ell - 1}^{(m)}), \,\,
        \bu_{\ell, \ell - 1}^{(m,k)} \coloneqq (\bu_{\ell}^{(m,k)} \!\!, \bu_{\ell - 1}^{(m,k)}), \,\,
        \bq_{\ell, \ell - 1}^{(m,k)} \coloneqq (\bq_{\ell}^{(m,k)} \!\!, \bq_{\ell - 1}^{(m,k)}) \\[4mm]
        &\texttt{function MLSGD}(\bz_{L}^{(0)} \!, \tset{M_\ell}_{\ell=0}^{L}, \tset{t_k}_{k=0}^{K-1}) \colon \\[-1mm]
        &\quad
        \begin{cases}
            \texttt{for } k=0,\dots,K-1 \colon \\
            \quad
            \begin{cases}
                E^{\text{ML}}[\bq_{L}^{(k)}], \, J^{\text{ML}}(\bz_{L}^{(k)})
                &\!\leftarrow \texttt{MultiLevelEstimation}(\bz_{L}^{(k)} \!, \tset{M_{\ell}}_{\ell=0}^{L}) \\[1mm]
                \hspace{0.65cm} \bz_{L}^{(k+1)} \!, \, E^{\text{ML}}[\bg_{L}^{(k)}]
                &\!\leftarrow {\texttt{GradientDescent}}(\bz_{L}^{(k)} \!, E^{\text{ML}}[\bq_L^{(k)}], t_k)
            \end{cases} \\
            \texttt{return } \bz_{L}^{(K)}
        \end{cases} \\ \\[-2mm]
        &\texttt{function GradientDescent}(\bz_{L}^{(k)} \!, E^{\text{ML}}[\bq_L^{(k)}], t_k) \colon \\[-1mm]
        &\quad
        \begin{cases}
            E^{\text{ML}} [\bg_{L}^{(k)}]
            &\!\leftarrow \lambda \bz_{L}^{(k)} - E^{\text{ML}} [\bq_{L}^{(k)}] \\[1mm]
            \bz_{L}^{(k+1)}
            &\!\leftarrow \pi_{Z}
            \big(\bz_{L}^{(k)} - t_k E^{\text{ML}} [\bg_{L}^{(k)}] \big) \\[1mm]
            \texttt{return } &\hspace{0.4cm} \bz^{(k+1)}, \,\, E^{\text{ML}} [\bg_{L}^{(k)}]
        \end{cases} \\ \\[-2mm]
        &\texttt{function MultiLevelEstimation}(\bz_{L}^{(k)} \!, \tset{M_{\ell}}_{\ell=0}^{L}) \colon \\[-1mm]
        &\quad
        \begin{cases}
            \text{// Solve state and adjoint system on lowest level } \ell=0 \text{ (cf.~\Cref{alg:bsgd})} \\[1mm]
            E^{\text{MC}}_{M_0}[\bp_{0}^{(k)}], \,\, J^{\text{MC}}_{M_0}(\bz_{0}^{(k)})
            \leftarrow \texttt{BatchEstimation}(\bz_{L}^{(k)} \!, M_{0}) \\
            \texttt{for } \ell = 1, \dots, L \colon \\
            \quad
            \begin{cases}
                \text{// Solve state and adjoint system for level pair } (\ell, \ell - 1) \\[1mm]
                E^{\text{MC}}_{M_\ell}[\bp_{\ell}^{(k)}], \,\, J^{\text{MC}}_{M_\ell}(\bz_{\ell,\ell-1}^{(k)})
                \leftarrow \texttt{LevelPairEstimation}(\bz_{L}^{(k)} \!, M_{\ell}, \, \ell) \\[1mm]
            \end{cases} \\ \\[-3mm]
            \text{// Return result of multilevel sums as in \eqref{eq:mlmc-estimator} and \eqref{eq:ocp-objective-multilevel-estimation}} \\
            \texttt{return }
            \sum_{\ell=0}^L \rP_{\ell}^{L} E^{\text{MC}}_{M_\ell}[\bp_\ell^{(k)}], \,\,\,\,
            \sum_{\ell=0}^L J^{\text{MC}}_{M_\ell}(\bz_{\ell, \ell-1}^{(k)})
            + \tfrac{\lambda}{2} \bnorm{\bz_{L}^{(k)}}_{W}^2
        \end{cases} \\ \\[-2mm]
        &\texttt{function } \texttt{LevelPairEstimation}(\bz_{L}^{(k)} \!, M_{\ell}, \, \ell) \colon \\[-1mm]
        &\quad
        \begin{cases}
            \bz^{(k)}_{\ell, \ell-1} \leftarrow (\rR^{\ell}_L \bz_{L}^{(k)}, \hspace{1mm} \rR^{\ell-1}_L \bz_{L}^{(k)})
            \hspace{12mm} \text{// Restrict and distribute control} \\[1mm]
            \texttt{for } m=1,2,\dots,M_{\ell} \colon
            \hspace{19.5mm} \text{// Run in parallel with optimal distr.} \\[1mm]
            \quad
            \begin{cases}
                \text{// Sampling method for } \omega \mapsto \by_{\ell, \ell-1}^{(m)} \\
                \by^{(m)}_{\ell, \ell-1} \hspace{0.1mm} \leftarrow [\hspace{27mm}]
                \begin{cases}
                    \qquad \qquad \vdots
                \end{cases} \\[3mm]
                \text{// Find states } \bu_{\ell, \ell-1}^{(m, k)} \text{ to controls }
                \bz^{(k)}_{\ell, \ell-1} \text{ and realizations } \by^{(m)}_{\ell, \ell-1} \\
                \bu_{\ell, \ell-1}^{(m, k)} \leftarrow
                [\by^{(m)}_{\ell, \ell-1}, \hspace{7mm} \bz^{(k)}_{\ell, \ell-1}]
                \begin{cases}
                    \texttt{Find } \bu_{\ell, \ell-1}^{(m,k)} \in V_{\ell, \ell-1} \texttt{ such that:} \\
                    \quad \cG_{\ell, \ell-1} [\by^{(m)}_{\ell, \ell-1}] \,\,
                    \bu_{\ell, \ell-1}^{(m, k)} = \bz_{\ell, \ell-1}^{(k)}
                \end{cases} \\ \\[-3mm]
                \text{// Find adjoints } \bq_{\ell, \ell - 1}^{(m,k)} \text{ to states }
                \bu_{\ell, \ell-1}^{(m, k)} \text{ and realizations } \by^{(m)}_{\ell, \ell-1} \\
                \bq_{\ell, \ell - 1}^{(m,k)} \leftarrow [\by^{(m)}_{\ell, \ell-1}, \bd - \bu_{\ell, \ell-1}^{(m, k)}]
                \begin{cases}
                    \texttt{Find } \bq_{\ell, \ell-1}^{(m,k)} \in V_{\ell, \ell-1} \texttt{ such that:} \\
                    \quad \cG^{*}_{\ell, \ell-1} [\by^{(m)}_{\ell, \ell-1}] \,\, \bq_{\ell, \ell-1}^{(m, k)} = \bd - \bu_{\ell, \ell-1}^{(m, k)}
                \end{cases}
            \end{cases} \\ \\[-3mm]
            \text{// Return result of estimators defined in \eqref{eq:mlmc-estimator} and \eqref{eq:ocp-objective-level-pair-estimation}} \\
            \texttt{return } E^{\text{MC}}_{M_\ell}[\bp_\ell^{(k)}], \,\,\,\,
            J^{\text{MC}}_{M_\ell}(\bz_{\ell, \ell-1}^{(k)})
        \end{cases}
    \end{align*}
\end{algorithm}

    \subsection{Experiments}
\label{subsec:mlsgd-experiments}
Having introduced~\Cref{alg:mlsgd}, we present first experiments to evaluate the method.
Particularly, we compare the MLSGD with the BSGD method
and further investigate the influence of the multilevel batch $\set{M_\ell}_{\ell=0}^L$
at hand of the PDE example introduced in~\Cref{subsec:example-problem}.

\paragraph{Comparing multilevel estimation and batch estimation}
The layout of~\Cref{fig:multilevel-vs-batch}
follows the figures in~\Cref{subsec:bsgd-experiments},
illustrating a comparison between MLMC and MC gradient estimation.
Both methods use the constant step size $t_k \equiv 100$,
the same initial control $\bz^{(0)}\equiv 0$,
the same number of iterations $K=100$,
and the same number of CPUs $\abs{\cP}=64$.
The mesh resolution starts with $h_0=2^{-4}$ and ends with $h_L=2^{-7}$ for the MLMC case,
whereas MC estimation operates as in~\Cref{subsec:bsgd-experiments} on $h_\ell=2^{-7}$.
Neither of the estimated objectives, $J^{\text{ML}}(\bz_{L}^{(k)})$
and $J^{\text{MC}}_M(\bz_{L}^{(k)})$ in~\Cref{fig:multilevel-vs-batch},
shows a clear advantage over the iterations $k$,
as both oscillate around the same value with a similar noise level.
However, the plot of the gradient norms reveals a significant
speedup—greater than factor 10 in this example—for the MLMC gradient estimation.
The MLSGD method achieves a similar convergence rate
and gradient quality by offloading variance reduction
through batching on the lower discretization levels
at significantly reduced computational cost.
This serves as an initial indication that MLMC
is a promising alternative to batch estimation.

\begin{figure}
    \begin{center}
        \includegraphics[width=0.8\textwidth]{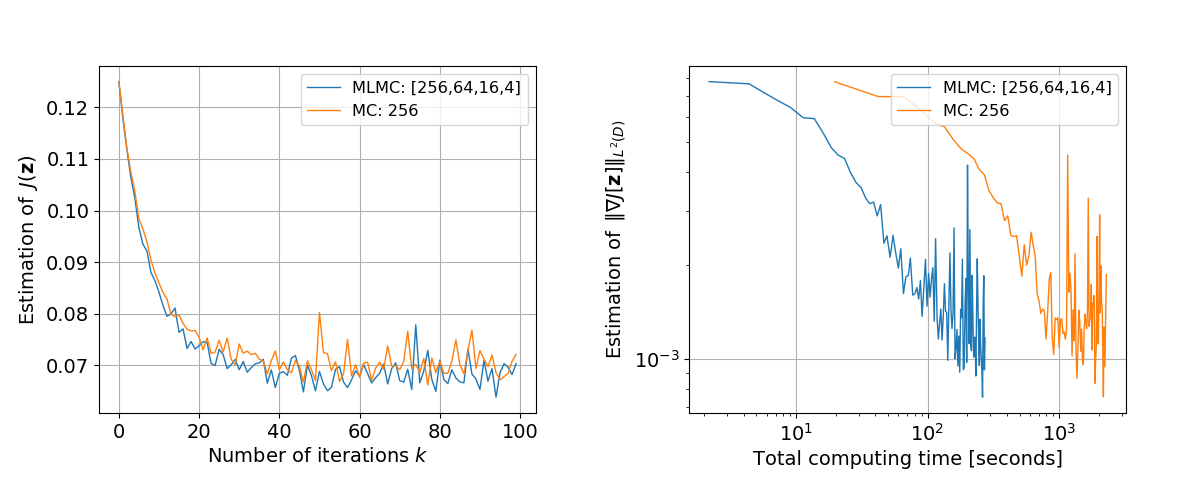}
    \end{center}

    \vspace{-0.6cm}

    \caption{Comparison of MC and MLMC gradient estimation.}
    \label{fig:multilevel-vs-batch}
\end{figure}

\paragraph{Investigation of the multilevel batch size}
Next, we examine the influence of the multilevel batch size
$\set{M_\ell}_{\ell=0}^L$ illustrated in~\Cref{fig:multilevel-batch-size}.
The blue lines in~\Cref{fig:multilevel-vs-batch} and~\Cref{fig:multilevel-batch-size}
correspond to the same batch with $h_0=2^{-4}$ and $h_L=2^{-7}$,
whereas the orange and green lines in~\Cref{fig:multilevel-batch-size}
also include $h_L=2^{-8}$ and $h_L=2^{-9}$, respectively.
We note that larger multilevel batches yield smoother and more robust
estimates of the objective $J^{\text{ML}}(\bz_{L}^{(k)})$.
The gradient norm plotted over computing time motivates
a similar discussion as in~\Cref{subsec:bsgd-experiments},
i.e., the larger the multilevel batch,
the better is~\eqref{eq:optimality-condition} satisfied
due to the reduced variance, but this time also due to the reduced bias.
Yet, the higher quality of the computational
results naturally comes at increased cost.

\begin{figure}
    \begin{center}
        \includegraphics[width=0.8\textwidth]{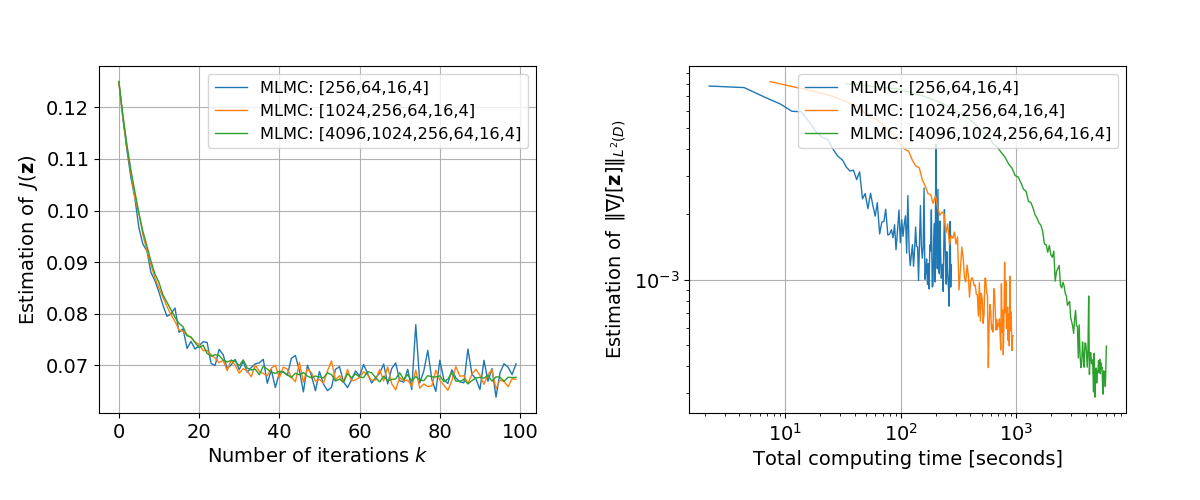}
    \end{center}

    \vspace{-0.6cm}

    \caption{Comparison of different multilevel batch sizes $\tset{M_\ell}_{\ell=0}^L$.}
    \label{fig:multilevel-batch-size}
\end{figure}

\paragraph{Discussion and remaining challenges}
Though the MLSGD method shows promising results in the numerical experiments,
several questions remain as well as new challenges arise:
(i)
We note that picking the optimal multilevel batch size
still depends upon the available computational resources.
Smaller batches are still to be favored for small computational time budgets.
In~\Cref{sec:adaptivity-and-budgeting}, we develop a hardware-aware method
that automatically finds the optimal batch size for a
given computational budget in CPU-time and memory.
(ii)
To incorporate the computational budgets,
an adaptive strategy for the step size $t_k$,
the multilevel batch $\tset{M_\ell}_{\ell=0}^L$,
the total number of iterations $K$
and the largest level $L$ has to be developed.
Before we propose ways to approach (i) and (ii)
algorithmically in~\Cref{sec:adaptivity-and-budgeting},
we examine the method's convergence and complexity
behavior analytically in~\Cref{subsec:mlsgd-analysis}.
(iii)
Lastly, leveraging parallel computing resources within a
multilevel setting is a non-trivial task.
The reason is that the MC estimates on the lower levels
are much more efficient with a sample parallelization,
whereas the MC estimates on the higher levels
often require a parallelization over the spatial domain $\cD$.
Yet, all estimates must be synchronized to compute the adjoint estimate
in~\eqref{eq:mlmc-estimator}.
To leverage the full potential of MLMC estimation
without sacrificing parallel efficiency,
careful algorithmic design is required.
We use the proposed multiindex data structure
of~\cite{baumgarten2025budgeted},
in which the discretization level~$\ell$ is
paired with a communication index~$s$.
The index~$s$ is chosen to minimize inter-processor communication,
enabling optimal parallelization when computing full field estimates.

    \subsection{Analysis}
\label{subsec:mlsgd-analysis}
Finally, within this section, we present comprehensive convergence and complexity
analysis of the MLSGD method.
The main result is given in~\Cref{thm:convergence-mlsgd},
leveraging~\Cref{lem:error-gradient-estimation} as a key idea.
\Cref{cor:convergence-mlsgd} generalizes the convergence result
on the objective and on the gradient.

\begin{lemma}
    \label{lem:error-gradient-estimation}
    There exists a multilevel batch $\tset{M_\ell}_{\ell=0}^L$,
    such that the error $\br_L^{(k)}$ between the gradient
    $\nabla J(\bz^{(k)})$ in step $k$ and its estimation
    $E^{\text{ML}}[\bg_L^{(k)}]$ in~\eqref{eq:mlsgd-iteration}
    \begin{equation}
        \label{eq:error-gradient-estimation}
        \br_L^{(k)} \coloneqq E^{\text{ML}}[\bg_L^{(k)}] - \nabla J(\bz^{(k)})
    \end{equation}
    as well as $\bnorm{\bz_L^{(k)} - \bz^{(k)}}_{\rL^2_k(\Omega, W)}$
    can be bounded by $\epsilon_k > 0$, particularly
    \begin{equation}
        \label{eq:gradient-control}
        \bnorm{\br_L^{(k)}}_{\rL^2_k(\Omega, W)}^2 \leq 2 \, \epsilon_k^2
        \quad \text{and} \quad
        \bnorm{\bz_L^{(k)} - \bz^{(k)}}_{\rL_k^2(\Omega, W)}^2 \leq \frac{c_{\cG} \, (1 - \theta)}{\lambda^2 c_{\cG} + 1} \, \epsilon_k^2,
    \end{equation}
    with $\theta \in (0, 1)$ and $c_{\cG}$ from~\eqref{eq:assumption-alpha-z} at computational complexity
    \begin{equation}
        \label{eq:gradient-complexity}
        \rC_k \roundlr{E^{\text{ML}}[\bg_L]} \lesssim
        \begin{cases}
            \epsilon_k^{-2} & \beta > \gamma, \\
            \epsilon_k^{-2} (\log(\epsilon_k))^2 & \beta = \gamma, \\
            \epsilon_k^{-2-(\gamma-\beta)/\alpha} & \beta < \gamma,
        \end{cases}
    \end{equation}
    where $\beta$ and $\gamma$ describe the decay of the sampling error
    and the increase of the computational cost as
    in~\eqref{eq:assumption-beta-q} and~\eqref{eq:assumption-gamma-CT}.
\end{lemma}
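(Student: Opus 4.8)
The plan is to split the gradient error into a part governed by the finest discretization level $L$ (the numerical bias of the adjoint together with the spatial discretization of the control) and a part governed by the per-level sample counts $\tset{M_\ell}_{\ell=0}^L$ (the multilevel sampling fluctuation), and then to distribute the tolerance $\epsilon_k^2$ between them through the parameter $\theta$. First I would insert the optimality representation $\nabla J(\bz^{(k)}) = \lambda \bz^{(k)} - \EE_k[\bq^{(k)}]$ from~\eqref{eq:optimality-condition} and the definition of $E^{\text{ML}}[\bg_L^{(k)}]$ from~\eqref{eq:mlsgd-iteration}, and add and subtract $\EE_k[\bq_L^{(k)}]$ to write
\begin{equation*}
    \br_L^{(k)} = \underbrace{\lambda\big(\bz_L^{(k)} - \bz^{(k)}\big) - \EE_k\big[\bq_L^{(k)} - \bq^{(k)}\big]}_{=:\, \bA_k} \; - \; \underbrace{\big(E^{\text{ML}}[\bq_L^{(k)}] - \EE_k[\bq_L^{(k)}]\big)}_{=:\, \bB_k}.
\end{equation*}
Here $\bA_k$ is $\cF_k$-measurable and, by the telescoping identity behind~\eqref{eq:mlmc-estimator}, the estimator is conditionally unbiased, so $\EE_k[\bB_k] = 0$. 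The cross term therefore vanishes under $\EE_k$ and the decomposition is orthogonal in $\rL^2_k(\Omega, W)$, giving
\begin{equation*}
    \bnorm{\br_L^{(k)}}^2_{\rL^2_k(\Omega, W)} = \bnorm{\bA_k}^2_{\rL^2_k(\Omega, W)} + \err_k^{\text{sam}},
\end{equation*}
with $\err_k^{\text{sam}}$ exactly the sampling error of~\eqref{eq:mse-mlmc}.

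Next I would control $\bA_k$ entirely through the level $L$. By the triangle inequality, Jensen's inequality $\bnorm{\EE_k[\bq_L^{(k)} - \bq^{(k)}]}_W^2 \le \EE_k[\bnorm{\bq_L^{(k)} - \bq^{(k)}}_W^2]$, and~\eqref{eq:assumption-alpha-z}, both summands of $\bA_k$ are dominated by the single quantity $\EE_k[\bnorm{\bq_L^{(k)} - \bq^{(k)}}_W^2]$. I would then choose $L$ large enough that $\EE_k[\bnorm{\bq_L^{(k)} - \bq^{(k)}}_W^2] \le (1-\theta)\,\epsilon_k^2$. By the first inequality in~\eqref{eq:assumption-alpha-z} this is exactly the second claim $\bnorm{\bz_L^{(k)} - \bz^{(k)}}_{\rL^2_k(\Omega,W)}^2 \le c_\cG(1-\theta)\epsilon_k^2$, and it simultaneously forces the numerical error $\err_k^{\text{num}} \le (1-\theta)\epsilon_k^2$; hence $\bnorm{\bA_k}^2_{\rL^2_k(\Omega,W)} \le \big(\lambda\sqrt{c_\cG} + 1\big)^2 (1-\theta)\,\epsilon_k^2$.

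Having fixed $L$, I would choose the multilevel batch $\tset{M_\ell}_{\ell=0}^L$ by the cost-optimal MLMC allocation so that $\err_k^{\text{sam}} \le \theta\,\epsilon_k^2$; combining with the previous paragraph yields $\bnorm{\br_L^{(k)}}^2_{\rL^2_k(\Omega,W)} \le \big[(\lambda\sqrt{c_\cG}+1)^2(1-\theta) + \theta\big]\epsilon_k^2$, and since the bracket tends to $1$ as $\theta \to 1$, fixing $\theta \in (0,1)$ close enough to $1$ (depending only on the fixed $\lambda, c_\cG$) gives the first claim $\bnorm{\br_L^{(k)}}^2 \le 2\epsilon_k^2$. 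The complexity bound~\eqref{eq:gradient-complexity} is then inherited verbatim from the MLMC complexity estimate~\eqref{eq:mlmc-complexity} applied with tolerance $\epsilon_k$, because the allocation realizing $\err_k^{\text{sam}} \le \theta\epsilon_k^2$ together with the chosen $L$ is precisely the standard MLMC construction, with the rates $\beta = \beta_{\bp}$ and $\gamma$ supplied by~\eqref{eq:assumption-beta-q}, \eqref{eq:assumption-gamma-CT} and~\eqref{eq:assumption-gamma-mem}.

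I expect the main obstacle to be conceptual rather than computational: the \emph{true} gradient is evaluated at the continuous control $\bz^{(k)}$, whereas the estimator uses the \emph{discretized} control $\bz_L^{(k)}$. Bridging this gap is exactly the role of~\eqref{eq:assumption-alpha-z} (cf.~\Cref{remark:assumptions}(7)), and the crucial structural point is that one single finest-level accuracy, $\EE_k[\bnorm{\bq_L^{(k)} - \bq^{(k)}}_W^2]$, controls both the control-discretization error and the multilevel bias at once. A secondary care point is the rigorous use of the filtration: one must check that $\bA_k$ is $\cF_k$-measurable and $\bB_k$ conditionally centred, so that the two contributions genuinely add without a cross term in $\rL^2_k(\Omega,W)$, and that the $k$-dependent constants $z_k, p_k, c_k, m_k$ only affect constants and not the per-step rates in~\eqref{eq:gradient-complexity}.
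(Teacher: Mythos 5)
Your proposal is correct, and its overall architecture coincides with the paper's proof: split $\br_L^{(k)}$ into a sampling contribution and a bias-type contribution (numerical adjoint bias plus control discretization), reduce both bias-type terms to the single quantity $\EE_k\big[\bnorm{\bq_L^{(k)} - \bq^{(k)}}_W^2\big]$ via Jensen's inequality and~\eqref{eq:assumption-alpha-z}, distribute $\epsilon_k^2$ with the trade-off parameter $\theta$, and inherit~\eqref{eq:gradient-complexity} from the standard MLMC allocation. The one genuine difference is the decomposition step. The paper uses the Young-type inequality $\tnorm{a+b}^2 \le 2\tnorm{a}^2 + 2\tnorm{b}^2$, which is where the factor $2$ in~\eqref{eq:gradient-control} originates; it then allocates the bias tolerance as $(1-\theta)\epsilon_k^2/(\lambda^2 c_\cG + 1)$, so the bound $\tnorm{\br_L^{(k)}}^2_{\rL^2_k(\Omega,W)} \le 2\epsilon_k^2$ holds for \emph{every} $\theta \in (0,1)$. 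You instead prove an exact conditional Pythagoras identity, using that your $\bA_k$ is $\cF_k$-measurable and that the MLMC estimator is conditionally unbiased (which indeed follows from the telescoping identity and the fact that fresh samples are drawn conditionally on $\cF_k$); this is sharper and avoids the artificial doubling. The price is your bookkeeping of constants: you impose only $\EE_k\big[\bnorm{\bq_L^{(k)} - \bq^{(k)}}_W^2\big] \le (1-\theta)\epsilon_k^2$, so your final constant is $(\lambda\sqrt{c_\cG}+1)^2(1-\theta) + \theta$, and the claimed bound $2\epsilon_k^2$ only emerges for $\theta$ close enough to $1$ (depending on $\lambda, c_\cG$). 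Since the lemma is existential in the batch and $\theta$, this is acceptable, though note that the budgeted algorithm later fixes $\theta = 0.5$, for which the paper's allocation — but not yours — guarantees the stated constant; with $\lambda = 10^{-8}$ in the experiments this is immaterial. Both proofs share the same mild imprecision regarding the complexity step: the level $L$ needed to drive the strong error $\EE_k\big[\bnorm{\bq_L^{(k)} - \bq^{(k)}}_W^2\big]$ below tolerance is governed by the rate $\alpha_{\bz}$ from the right-hand side of~\eqref{eq:assumption-alpha-z}, while~\eqref{eq:gradient-complexity} is stated with $\alpha = \alpha_{\bq}$, so you have introduced no gap beyond what the paper itself accepts.
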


\begin{proof}
    By $\nabla J(\bz^{(k)}) = \lambda \bz^{(k)} - \EE_{k}[\bq^{(k)}]$,~\eqref{eq:mlsgd-iteration} and the triangle inequality, we have
    \begin{align*}
        \bnorm{\br_L^{(k)}}_{\rL_k^2(\Omega, W)}^2
        &= \EE_k \left[\bnorm{\lambda \bz_L^{(k)}  - \lambda \bz^{(k)} + \EE_k[\bq^{(k)}] - E^{\text{ML}}[\bq_L^{(k)}]}_W^2\right] \\
        &\leq 2 \lambda^2 \bnorm{\bz_L^{(k)} - \bz^{(k)}}_{\rL_k^2(\Omega, W)}^2
        + 2 \, \bnorm{\EE_k[\bq^{(k)}] - E^{\text{ML}}[\bq_L^{(k)}]}_{\rL^2_k(\Omega, W)}^2
    \end{align*}
    Since $\mathrm{err}^{\mathrm{MSE}}_k \coloneqq \EE_k \squarelr{\bnorm{\EE_k[\bq^{(k)}] - E^{\text{ML}}[\bq_L^{(k)}]}_W^2}$,
    we have with the decomposition~\eqref{eq:mse-mlmc}
    \begin{align*}
        \tfrac{1}{2} \bnorm{\br_L^{(k)}}_{\rL_k^2(\Omega, W)}^2
        &\leq \sum_{\ell=0}^L M_{\ell}^{-1}  \bnorm{{\bp_\ell^{(k)} - \EE_k[\bp_\ell^{(k)}]}}^2_{\rL_k^2(\Omega, W)}
        + \bnorm{\EE_k[\bq_{L}^{(k)} - \bq^{(k)}]}_W^2 \\
        &\quad+ \lambda^2 \bnorm{\bz_L^{(k)} - \bz^{(k)}}_{\rL^2_k(\Omega, W)}^2 \,.
    \end{align*}
    Considering the left side of assumption~\eqref{eq:assumption-alpha-z}
    and applying Jensen's inequality gives
    \begin{align*}
        \tfrac{1}{2} \bnorm{\br_L^{(k)}}_{\rL_k^2(\Omega, W)}^2
        \leq \sum_{\ell=0}^L M_{\ell}^{-1}  \bnorm{{\bp_\ell^{(k)} - \EE_k[\bp_\ell^{(k)}]}}^2_{\rL_k^2(\Omega, W)}
        \! + (\lambda^2 c_{\cG} + 1) \, \EE_k \big[\bnorm{\bq_L^{(k)} - \bq^{(k)}}_{W}^2 \big]
    \end{align*}
    Thus, with~\eqref{eq:assumption-beta-q}
    and~\eqref{eq:assumption-alpha-z} both error
    contributions are controlled and similar as 
    in~\cite{cliffe2011multilevel, giles2008multilevel},
    we can achieve with some bias-variance trade-off $\theta \in (0, 1)$
    \begin{align*}
        \sum_{\ell=0}^L M_{\ell}^{-1}  \bnorm{{\bp_\ell^{(k)} - \EE_k[\bp_\ell^{(k)}]}}^2_{\rL_k^2(\Omega, W)}
        &\leq \theta \epsilon_k^2
        \quad \text{and} \quad
        \EE_k \big[\bnorm{\bq_L^{(k)} - \bq^{(k)}}_{W}^2 \big]
        \leq \frac{(1 - \theta)}{(\lambda^2 c_{\cG} + 1)} \, \epsilon_k^2
    \end{align*}
    through an appropriate choice of $\tset{M_\ell^{(k)}}_{\ell=0}^L$
    such that~\eqref{eq:gradient-complexity} holds
    following \eqref{eq:mlmc-complexity}.
    Details on this construction are given in~\cite{cliffe2011multilevel}
    in the appended proof on the generalized MLMC theorem and
    in~\Cref{alg:bmlsgd} which presents a way on how to find
    $\tset{M_\ell^{(k)}}_{\ell=0}^L$ algorithmically.
\end{proof}

\begin{theorem}[Convergence and $\epsilon$-Cost of MLSGD]
    \label{thm:convergence-mlsgd}
    Suppose $\bz^* \in {Z}$ denotes
    a solution to~\Cref{problem:ocp} satisfying~\eqref{eq:variational-inequality}.
    There exists a sequence of multilevel batches
    $\bset{\tset{M_{k,\ell}}_{\ell=0}^{L}}_{k=0}^{K-1}$,
    step sizes $\tset{t_k}_{k=0}^{K-1} \subset \RR_{\geq 0}$,
    and some $\rho \in (0, 1)$, such that
    \begin{equation}
        \label{eq:convergence-mlsgd}
        e_K \coloneqq \bnorm{\bz^{(K)}_L - \bz^*}_{\rL^2(\Omega, W)}
        \quad \text{converges linearly, i.e.,} \quad
        e_K \leq \cO \big( \rho^{K} \big).
    \end{equation}
    Further, reaching an error of $e_K < \epsilon$ smaller than accuracy $\epsilon > 0$ comes at the cost
    \begin{equation}
        \label{eq:cost-mlsgd}
        \rC_{\epsilon} \lesssim
        \begin{cases}
            \epsilon^{-2} & \beta > \gamma, \\
            \epsilon^{-2} (\log(\epsilon))^2 & \beta = \gamma, \\
            \epsilon^{-2 -(\gamma-\beta)/\alpha} & \beta < \gamma.
        \end{cases}
    \end{equation}
\end{theorem}

\begin{proof}
    The proof is organized in two steps,
    first we show the linear convergence
    and then we estimate the cost of the algorithm.
    We define for the further proof
    \begin{align*}
        e_{k+1} \coloneqq \bnorm{\bz^{(k+1)}_L - \bz^*}_{\rL_k^2(\Omega, W)}, \quad
        e_k \coloneqq \bnorm{\bz^{(k)}_L - \bz^*}_{\rL_k^2(\Omega, W)}.
    \end{align*}

    \smallskip

    \paragraph{(i)} Adapting the arguments of~\cite{geiersbach2020stochastic}
    to the iteration scheme~\eqref{eq:mlsgd-iteration} gives
    \begin{align*}
        e_{k+1}^2 \!
        &= \bnorm{\pi_{Z}
            \big(\bz^{(k)}_L - t_k E^{\text{ML}}[\bg_L^{(k)}]\big)
            - \pi_{Z}\left(\textcolor{black}{\bz^*} - t_k \nabla J(\bz^*) \right)}_{\rL_k^2(\Omega, W)}^2 \\
        &\leq \bnorm{\bz^{(k)}_L-t_k E^{\text{ML}}[\bg_L^{(k)}]-\bz^* {+ t_k \nabla J(\bz^*)}}_{\rL_k^2(\Omega, W)}^2\\
        &= e_k^2
        + t_k^2\bnorm{E^{\text{ML}}[\bg_L^{(k)}] - \nabla J(\bz^*)}_{\rL_k^2(\Omega, W)}^2 \\
        &\quad - 2 t_k \bsprod{\bz^{(k)}_L \!\! - \bz^* \!,  E^{\text{ML}}[\bg_L^{(k)}] {- \nabla J(\bz^*)} }_{\rL_k^2(\Omega, W)} \,.
    \end{align*}
    By~\eqref{eq:error-gradient-estimation},
    it is $E^{\text{ML}}[\bg_L^{(k)}] = \br^{(k)}_L + \nabla J(\bz^{(k)})$ and we can write
    \begin{align*}
        e_{k+1}^2
        &\leq e_k^2
        -  2 t_k  \bsprod{\bz^{(k)}_L -\bz^*,  \br^{(k)}_L +  \nabla J(\bz^{(k)}) { - \nabla J(\bz^*)} }_{\rL_k^2(\Omega, W)}\\
        &\quad+  t_k^2  \bnorm{\br^{(k)}_L +  \nabla J(\bz^{(k)}){ - \nabla J(\bz^*)}}_{\rL_k^2(\Omega, W)}^2.
    \end{align*}
    Since $J$ is $\mu$-strongly convex~\eqref{eq:strongly-convex}
    and by adding and subtracting $\nabla J (\bz_L^{(k)})$,
    it follows
    \begin{align*}
        -\bsprod{\bz^{(k)}_L - \bz^*, \nabla J(\bz^{(k)}) { - \nabla J(\bz^*)}}_{\rL_k^2(\Omega, W)}
        &\leq -\mu e_k^2 + e_k \bnorm{\nabla J(\bz^{(k)}) - \nabla J (\bz_L^{(k)})}_{\rL_k^2(\Omega, W)}, \\
        - \bsprod{\bz^{(k)}_L -\bz^*,  \br^{(k)}_L }_{\rL_k^2(\Omega, W)}
        &\leq e_k \, \bnorm{\br^{(k)}_L }_{\rL_k^2(\Omega, W)}.
    \end{align*}
    Therefore, we can estimate
    \begin{align*}
        e_{k+1}^2 \leq \, e_k^2
        &+ 2 t_k \Big(e_k \, \bnorm{\br^{(k)}_L }_{\rL_k^2(\Omega, W)}
        + e_k \bnorm{\nabla J(\bz^{(k)}) - \nabla J (\bz_L^{(k)})}_{\rL_k^2(\Omega, W)} -\mu e_k^2 \Big) \\
        &+ 2 t_k^2 \big(\bnorm{\br_L^{(k)}}_{\rL_k^2(\Omega, W)}^2 + \bnorm{\nabla J(\bz^{(k)}) { - \nabla J(\bz^*)}}_{\rL_k^2(\Omega, W)}^2 \big) \,.
    \end{align*}
    Further, by~\eqref{eq:lipschitz-continuity-gradient}
    we see that
    \begin{align*}
        \bnorm{\nabla J(\bz^{(k)}) - \nabla J(\bz^{*})}_{\rL_k^2(\Omega, W)}^2
        &\leq c_{\mathrm{Lip}}^2 \bnorm{\bz^{(k)} - \bz^*}_{\rL_k^2(\Omega, W)}^2 \\
        &\leq 2 c_{\mathrm{Lip}}^2 \big(e_k^2 + \bnorm{\bz^{(k)} - \bz_L^{(k)}}_{\rL_k^2(\Omega, W)}^2 \big) \,.
    \end{align*}
    and since the true gradient in step $k$ is given by $\nabla J(\bz^{(k)}) = \lambda \bz^{(k)} - \EE_k[\bq^{(k)}]$,
    it is
    \begin{align*}
        \bnorm{\nabla J(\bz^{(k)}) - \nabla J (\bz_L^{(k)}))}_{\rL_k^2(\Omega, W)}
        &\leq \bnorm{\EE_k[\bq^{(k)} - \bq^{(k)}_L]}_{\rL_k^2(\Omega, W)}
        + \lambda \bnorm{\bz^{(k)} - \bz_L^{(k)}}_{\rL_k^2(\Omega, W)} \,.
    \end{align*}
    In conclusion, the estimate for $e_{k+1}^2$ is given by
    \begin{align*}
        e_{k+1}^2 \leq \, e_k^2
        &+ 2 t_k e_k \bnorm{\br^{(k)}_L}_{\rL_k^2(\Omega, W)}
        + 2 t_k e_k \bnorm{\EE_k[\bq^{(k)} - \bq^{(k)}_L]}_{\rL_k^2(\Omega, W)} \\
        &+ 2 t_k e_k \lambda \bnorm{\bz^{(k)} - \bz_L^{(k)}}_{\rL_k^2(\Omega, W)}
        + 2 t_k^2 \bnorm{\br^{(k)}_L}_{\rL_k^2(\Omega, W)}^2 \\
        &+ 4 t_k^2 c_{\mathrm{Lip}}^2 \bnorm{\bz^{(k)} - \bz_L^{(k)}}_{\rL_k^2(\Omega, W)}^2
        + 4 t_k^2 c_{\mathrm{Lip}}^2 e_k^2
        - 2 t_k \mu e_k^2 \,.
    \end{align*}
    Now, we apply~\Cref{lem:error-gradient-estimation} by choosing
    $\epsilon_k$ in every iteration $k$ as $\eta \cdot e_k$
    with a sufficiently small $\eta>0$.
    Then, there exists a sequence of multilevel
    batches $\bset{\tset{M_\ell^{(k)}}_{\ell=0}^{L}}_{k \in \NN}$,
    such that
    \begin{equation}
        \label{eq:quadratic-minimization}
        e_{k+1}^2 \leq \, e_k^2
        \big( (4 c_{\mathrm{Lip}}^2 (1+\eta^2 {c_{\lambda\cG}}) + {4}\eta^2) t_k^2
        + 2 (\sqrt{2}\eta +\eta + \sqrt{{c_{\lambda\cG}} }\lambda\eta - \mu) t_k + 1 \big)
        \eqqcolon e_k^2 \rho_k^2
    \end{equation}
    where we used $(1-\theta)\leq1$ and $c_{\lambda\cG} \coloneqq \tfrac{c_{\cG}}{\lambda^2 c_\cG + 1}$.
    By taking the square root of the above expression, it follows by induction
    that the error converges linearly if $\rho_k < 1$ in each step.
    This can be achieved by choosing the step size $t_k$ and the factor $\eta$,
    such that the quadratic expression of $\rho_k$ in~\eqref{eq:quadratic-minimization}
    is minimized and $t_k>0$ holds.
    As a result, by taking the expectation and the tower property,
    the total error in the final step $K$ is given by
    $\cO(\rho^K)$, with $\rho \coloneq \sup \tset{\rho_k \colon k = 1, \ldots, K}$.

    \paragraph{(ii)} We denote with $\rC_k$ again the cost of the $k$-th descent.
    By~\Cref{lem:error-gradient-estimation},
    the summed cost $\rC_\epsilon$ to achieve a total
    error of $e_K < \epsilon$ is given by
    \begin{align*}
        \rC_{\epsilon} &= \sum_{k=0}^K \rC_k \lesssim \sum_{k=0}^K
        \begin{cases}
            \epsilon_k^{-2} & \beta > \gamma, \\
            \epsilon_k^{-2} \log(\epsilon_k)^2 & \beta = \gamma, \\
            \epsilon_k^{-2-(\gamma-\beta)/\alpha} & \beta < \gamma.
        \end{cases}
    \end{align*}
    Since we chose the multilevel batch such that $\eta^{-1}\,\epsilon_k$ equals the
    error in each step, and since $e_k =\cO(\rho^k)$,
    we get similar to~\cite{van2019robust} for large $K \gg 1$ the asymptotic behavior
    \begin{align*}
        \rC_{\epsilon} \lesssim \sum_{k=0}^K
        \begin{cases}
            \rho^{-2k}   \\
            \rho^{-2k} (k \log(\rho))^2 \\
            \rho^{-2k - (\gamma-\beta)k/\alpha}
        \end{cases}
        \hspace{-0.3cm} \sim
        \begin{cases}
            \rho^{-2K-2} \\
            K^2 \, \rho^{-2K-2} \\
            \rho^{-2(K+1) - (\gamma-\beta)(K+1)/\alpha} \\
        \end{cases}
        \hspace{-0.5cm}\sim
        \begin{cases}
            \epsilon^{-2} & \beta > \gamma, \\
            \epsilon^{-2} (\log(\epsilon))^2 & \beta = \gamma, \\
            \epsilon^{-2 -(\gamma-\beta)/\alpha} & \beta < \gamma,
        \end{cases}
    \end{align*}
    using the geometric series and $\epsilon = \cO(\rho^K)$.
\end{proof}

\begin{remark}
    \begin{enumerate}
        \item Even though MLSGD improves the rate from $\cO(K^{-1/2})$
        to linear convergence $\cO(\rho^K)$, as discussed in~\Cref{subsec:bsgd-discussion},
        similar results can be achieved
        with large enough batches and gradient aggregation methods.
        However, as pointed out in~\cite{bottou2018optimization}
        and observed in~\Cref{subsec:bsgd-experiments},
        this comes with an increased computational cost.
        Our approach, leveraging the MLMC method, reduces the cost
        of the gradient estimation dramatically,
        as demonstrated numerically in~\Cref{subsec:mlsgd-experiments}
        and now proven by~\Cref{thm:convergence-mlsgd}.
        \item The parameter $\eta>0$ has to be chosen small enough
        that minimizing~\eqref{eq:quadratic-minimization} is achieved with $t_k>0$.
        Since $\eta~<~\frac{\mu}{\sqrt{2} +1 + \sqrt{{c_{\lambda\cG}}}\lambda}$ is independent of $k$,
        this suggests the usage of an adaptive algorithm that successively enlarges the multilevel batch.
        In particular, as the algorithm runs and $e_k$ decreases,
        this leads with $\epsilon_k = \eta \cdot e_k$ to a decreasing target error $\epsilon_k$
        and thus to an increased computational cost of each iteration.
    \end{enumerate}
\end{remark}

The following corollary motivates
implementing the targeted multilevel error
by $\epsilon_k = \eta \cdot \Vert E^{\text{ML}} [\bg_L^{(k)}] \Vert_W$.
This idea is similar to~\cite{van2019robust} and later used in \Cref{alg:bmlsgd}.

\begin{corollary}
    \label{cor:convergence-mlsgd}
    {Suppose $\bz^* \in {Z}$ denotes
    a solution to~\Cref{problem:ocp} satisfying~\eqref{eq:optimality-condition}}
    {a.e.~$\bx\in\cD$}.
    There exists a sequence of multilevel batches
    $\bset{\tset{M_\ell^{(k)}}_{\ell=0}^{L}}_{k=0}^{K-1}$,
    step sizes $\tset{t_k}_{k=0}^{K-1} \subset \RR_{\geq 0}$,
    and some $\rho \in (0, 1)$, such that
    \begin{equation*}
        \EE \big[J(\bz^{(K)}_L) - J(\bz^*)\big] \leq \cO (\rho^K) \quad\text{ and }\quad \bnorm{E^{\rM\rL}[\bg_L^{(K)}]}_{\rL^2(\Omega, W)} \leq \cO(\rho^K) \,.
    \end{equation*}
\end{corollary}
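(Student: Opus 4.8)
The plan is to reuse verbatim the construction from \Cref{thm:convergence-mlsgd}: the same sequence of multilevel batches $\bset{\tset{M_\ell^{(k)}}_{\ell=0}^L}_{k=0}^{K-1}$, the same step sizes $\tset{t_k}_{k=0}^{K-1}$, and the same contraction factor $\rho \in (0,1)$ obtained by setting $\epsilon_k = \eta\, e_k$ for sufficiently small $\eta > 0$. With this choice \Cref{thm:convergence-mlsgd} already delivers $e_K = \bnorm{\bz^{(K)}_L - \bz^*}_{\rL^2(\Omega, W)} \leq \cO(\rho^K)$, so both assertions of the corollary should follow by post-processing this single bound; no new construction is required, which is why it is stated as a corollary.

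For the objective gap I would exploit the Lipschitz continuity of $\nabla J$ in \eqref{eq:lipschitz-continuity-gradient} together with the optimality condition $\nabla J(\bz^*) = 0$ from \eqref{eq:optimality-condition}. Integrating $\nabla J$ along the segment from $\bz^*$ to an admissible $\bz \in Z$ (which stays in $Z$ by convexity) gives the standard quadratic upper bound $J(\bz) - J(\bz^*) \leq \tfrac{c_{\mathrm{Lip}}}{2}\bnorm{\bz - \bz^*}_W^2$. Applying this pointwise in the optimization randomness to $\bz = \bz^{(K)}_L \in Z_L \subset Z$ and taking total expectation yields $\EE\big[J(\bz^{(K)}_L) - J(\bz^*)\big] \leq \tfrac{c_{\mathrm{Lip}}}{2}\, e_K^2 \leq \cO(\rho^{2K}) \leq \cO(\rho^K)$, where the last step absorbs $\rho^2 \in (0,1)$ into a renamed $\rho$.

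For the gradient estimate I would start from the splitting $E^{\text{ML}}[\bg_L^{(K)}] = \br_L^{(K)} + \nabla J(\bz^{(K)})$ of \eqref{eq:error-gradient-estimation} and apply the triangle inequality in $\rL^2(\Omega, W)$. The first summand is controlled directly by \Cref{lem:error-gradient-estimation}, namely $\bnorm{\br_L^{(K)}}_{\rL^2_K(\Omega, W)}^2 \leq 2\epsilon_K^2 = 2\eta^2 e_K^2$. For the second summand I would again use $\nabla J(\bz^*) = 0$ and Lipschitz continuity to get $\bnorm{\nabla J(\bz^{(K)})}_{\rL^2_K(\Omega, W)} \leq c_{\mathrm{Lip}}\bnorm{\bz^{(K)} - \bz^*}_{\rL^2_K(\Omega, W)}$, then insert the discretized iterate via $\bnorm{\bz^{(K)} - \bz^*}_{\rL^2_K(\Omega,W)} \leq \bnorm{\bz^{(K)} - \bz^{(K)}_L}_{\rL^2_K(\Omega,W)} + \bnorm{\bz^{(K)}_L - \bz^*}_{\rL^2_K(\Omega,W)}$, bounding the first term by the control-error estimate $\bnorm{\bz^{(K)}_L - \bz^{(K)}}^2_{\rL^2_K(\Omega,W)} \leq c_\cG (1-\theta)\epsilon_K^2$ from \eqref{eq:gradient-control} (originating in \eqref{eq:assumption-alpha-z}) and the second by $e_K$. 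Collecting terms gives $\bnorm{E^{\text{ML}}[\bg_L^{(K)}]}_{\rL^2_K(\Omega, W)} \lesssim e_K$, and the tower property converts this conditional bound into the claimed unconditional bound $\bnorm{E^{\text{ML}}[\bg_L^{(K)}]}_{\rL^2(\Omega, W)} \leq \cO(\rho^K)$.

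The main obstacle is bookkeeping rather than a deep estimate. One must keep the conditional norm $\bnorm{\cdot}_{\rL^2_K(\Omega, W)}$ (an $\cF_K$-measurable random quantity, since the adaptive target $\epsilon_k = \eta e_k$ is itself $\cF_k$-measurable and random) cleanly separated from the unconditional $\rL^2(\Omega, W)$-norm appearing in the statement, passing between them only at the very end via the tower property, exactly as in \Cref{thm:convergence-mlsgd}. One must also carefully distinguish the spatially discretized control $\bz^{(K)}_L$ from the continuous-in-space but stochastically-approximated control $\bz^{(K)}$, bridging them through \eqref{eq:gradient-control}; each such step only inflates the estimate by a factor $1 + \cO(\eta)$, which leaves the geometric rate $\rho$ unaffected.
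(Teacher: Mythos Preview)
Your proposal is correct and follows essentially the same approach as the paper: both parts reduce to the quadratic bound $e_K^2 \leq \cO(\rho^{2K})$ from \Cref{thm:convergence-mlsgd}, the objective gap via a Lipschitz-gradient upper bound and the gradient norm via the splitting $E^{\text{ML}}[\bg_L^{(K)}] = \br_L^{(K)} + \nabla J(\bz^{(K)})$ with each piece controlled exactly as you describe and as in \Cref{lem:error-gradient-estimation} and \Cref{thm:convergence-mlsgd}. The only cosmetic difference is that the paper obtains the quadratic bound on $J(\bz^{(K)}_L) - J(\bz^*)$ from the strong-convexity inequality~\eqref{eq:strongly-convex-equivalent} combined with~\eqref{eq:lipschitz-continuity-gradient} (yielding constant $c_{\mathrm{Lip}} - \tfrac{\mu}{2}$) rather than the descent lemma you invoke (constant $\tfrac{c_{\mathrm{Lip}}}{2}$).
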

\begin{proof}
    The first expression follows from the strong
    $\mu$-convexity~\eqref{eq:strongly-convex-equivalent}
    together with~\eqref{eq:lipschitz-continuity-gradient}
    \begin{align*}
        J(\bz^{(K)}_L)  -  J(\bz^*) &\leq
        \bsprod{\nabla J[\bz^{(K)}_L], \bz^{(K)}_L  - \bz^*}_{W}
         - \tfrac{\mu}{2} \bnorm{\bz^{(K)}_L  - \bz^*}_{W}^2 \\
        &\leq (c_{\mathrm{Lip}}  -  \tfrac{\mu}{2}) \bnorm{\bz^{(K)}_L  - \bz^*}_{W}^2
    \end{align*}
    and by taking the expectation on both sides.
    The second expression follows from
    bounding $\bnorm{\nabla J(\bz^{(k)})}_{\rL^2_k(\Omega, W)}$
    and $\bnorm{\br_L^{(k)}}_{\rL_k^2(\Omega, W)}$
    in~\eqref{eq:error-gradient-estimation}
    by extending with the fact that \eqref{eq:optimality-condition} holds and with
    similar arguments as in~\Cref{thm:convergence-mlsgd}.
    Taking the expectation over all possible optimization
    trajectories concludes the result.
\end{proof}

    \section{Adaptivity and Budgeting}\label{sec:adaptivity-and-budgeting}

Having established the MLSGD method,
particularly its convergence in~\Cref{thm:convergence-mlsgd}
and its functionality in~\Cref{alg:mlsgd},
we now address the question of how to choose the multilevel
batches $\bset{\tset{M_{k,\ell}}_{\ell=0}^{L}}_{k=0}^{K-1}$
and step sizes $\tset{t_k}_{k=0}^{K-1}$
given computational constraints
such as memory limitations $\mathrm{Mem}_{0}$
and CPU-time budgets $\abs{\cP} \cdot \rT_{0}$,
$\rT_{0}$ being the reserved time budget on the cluster
and $\abs{\cP}$ being the cardinality of the set of processing units.
Building upon~\cite{baumgarten2024fully, baumgarten2025budgeted},
we formulate the following knapsack problem,
which is then solved with the budgeted
multilevel stochastic gradient descent (BMLSGD) method,
adaptively determining the multilevel batches and step sizes.

\begin{problem}[MLSGD Knapsack]
    \label{problem:knapsack-ocp}
    Suppose the solution $\bz^* \in {Z}$ to the optimal control problem~\eqref{eq:ocp-objective} is given,
    find the optimal sequence of step sizes $\tset{t_k}_{k=0}^{K-1}$
    and multilevel batches $\bset{\tset{M_{k,\ell}}_{\ell=0}^{L_k}}_{k=0}^{K-1}$,
    such that the total error $e_K$ is minimized,
    while staying within a CPU-time $\abs{\cP} \cdot \rT_{0}$ and memory ${\mathrm{Mem}}_{0}$ budget
    \begin{subequations}
        \label{eq:knapsack-ocp}
        \begin{align}
            \label{eq:knapsack-ocp-error}
            \hspace{-0.5cm} \min_{\,\,\,\,\,\set{t_k, \tset{M_{k,\ell}}_{\ell=0}^{L_k}}_{k=0}^K}
            \bnorm{\bz_L^{(K)} - \bz^*&}_{\rL^2(\Omega,W)} \\[1mm]
            \label{eq:knapsack-ocp-ct-constraints}
            \sum_{k=0}^{K_{\phantom{,}}\!\!-1}
            \sum_{\ell = 0}^{L_{k\phantom{,}}}
            \sum_{m=1}^{M_{k,\ell}} \rC^{\rC\rT} \big(({\bv}_\ell^{(m,k)}\!, \bp_\ell^{(m,k)}) \big)
            &\leq \abs{\cP} \cdot \rT_{0} \\
            \label{eq:knapsack-ocp-mem-constraints}
            \rC^{\mathrm{Mem}}\big(({\bv}_L^{(K-1)}\!, \bp_L^{(K-1)})\big)
            &< {\mathrm{Mem}}_{0} .
        \end{align}
    \end{subequations}
\end{problem}

\Cref{problem:knapsack-ocp} is an NP-hard integer resource allocation problem
which we solve with distributed dynamic programming techniques
utilizing an optimal policy to find the multilevel batches in
each iteration of the optimization algorithm.
To respect the cost constraints~\eqref{eq:knapsack-ocp-ct-constraints} and~\eqref{eq:knapsack-ocp-mem-constraints},
we utilize assumptions~\eqref{eq:assumption-gamma-CT} and~\eqref{eq:assumption-gamma-mem}.
Before we present the BMLSGD method in~\Cref{alg:bmlsgd},
we can conclude with similar arguments as
in~\cite{baumgarten2024fully, baumgarten2025budgeted}
that the error of every feasible solution is bounded
from below by the memory constraint
and from above by the CPU-time budget.

\begin{corollary}[Lower and Upper bound of MLSGD]
    \label{cor:upper-and-lower-bound}
    The minimum~\eqref{eq:knapsack-ocp-error} is bounded by
    the imposed constraints~\eqref{eq:knapsack-ocp-ct-constraints}
    and~\eqref{eq:knapsack-ocp-mem-constraints} as
    \begin{equation}
        \label{eq:upper-and-lower-bound}
        c_{\mathrm{Mem}}{\mathrm{Mem}}_{0}^{-\alpha/\gamma_{\mathrm{Mem}}}
        < \,\, \bnorm{\bz_L^{(K)} - \bz^*}_W \,\,  \lesssim \,\,
        (1 - \lambda_{\mathrm{p}}) \rT_{0}^{-\delta} + \lambda_{\mathrm{p}} (\abs{\cP} \cdot \rT_{0})^{-\delta}.
    \end{equation}
    Here, $\delta = \min \bset{\tfrac12, \frac{\alpha}{2 \alpha + (\gamma - \beta)}}$
    is the convergence rate with respect to the computational resources and
    $\lambda_{\mathrm{p}} \in [0, 1)$ is the parallelizable percentage of the code
    and $c_{\mathrm{Mem}} > 0$ is some memory cost constant.
\end{corollary}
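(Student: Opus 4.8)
The plan is to treat the two inequalities in~\eqref{eq:upper-and-lower-bound} separately, since they originate from the two distinct resource constraints of~\Cref{problem:knapsack-ocp}, and in both directions to reduce everything to the $\epsilon$-cost complexity already proven in~\Cref{thm:convergence-mlsgd} (equivalently~\Cref{lem:error-gradient-estimation}). The only genuinely new ingredients are the passage from accuracy to the physical budgets $\mathrm{Mem}_0$ and $\abs{\cP}\cdot\rT_0$, and the Amdahl-type decomposition of the computation into a serial and a parallelizable portion.

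For the lower bound I would first note that the memory constraint~\eqref{eq:knapsack-ocp-mem-constraints} involves only the peak footprint of a single finest-level field pair. By~\eqref{eq:assumption-gamma-mem} this footprint scales like $h_L^{-\gamma_{\mathrm{Mem}}}$, so $\mathrm{Mem}_0 > \rC^{\mathrm{Mem}}$ imposes a floor on the finest attainable mesh size, $h_L \gtrsim \mathrm{Mem}_0^{-1/\gamma_{\mathrm{Mem}}}$. The central point is that neither additional sampling nor further optimization steps can remove the discretization bias carried by this finest level: splitting $\bz_L^{(K)} - \bz^* = (\bz_L^{(K)} - \bz^{(K)}) + (\bz^{(K)} - \bz^*)$ and combining the control-error bound~\eqref{eq:assumption-alpha-z} with the adjoint numerical-error bound~\eqref{eq:assumption-alpha-q} isolates an irreducible contribution of order $h_L^{\alpha}$. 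Inserting the memory-induced floor on $h_L$ then produces the left inequality of~\eqref{eq:upper-and-lower-bound}, after tracking the exponent $\gamma_{\mathrm{Mem}}$.

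For the upper bound I would invert~\Cref{thm:convergence-mlsgd}: reaching accuracy $\epsilon$ costs $\rC_\epsilon \lesssim \epsilon^{-1/\delta}$ with $\delta = \min\bset{\tfrac12, \tfrac{\alpha}{2\alpha + (\gamma-\beta)}}$ and $\gamma = \gamma_{\rC\rT}$ (the logarithmic boundary case $\beta = \gamma$ being treated by the same bookkeeping), hence a CPU budget $B$ yields attainable accuracy $\epsilon(B) \lesssim B^{-\delta}$. It remains to feed in the wall-clock budget $\rT_0$ subject to the time constraint~\eqref{eq:knapsack-ocp-ct-constraints}. Here I would split the total work by the parallelizable percentage $\lambda_{\mathrm{p}}$: the non-parallelizable fraction $1-\lambda_{\mathrm{p}}$ only sees the single-unit budget $\rT_0$, whereas the parallelizable fraction $\lambda_{\mathrm{p}}$ exploits the full aggregate budget $\abs{\cP}\cdot\rT_0$. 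Applying the rate $\delta$ to each portion with its effective budget and summing the two error contributions gives $(1-\lambda_{\mathrm{p}})\rT_0^{-\delta} + \lambda_{\mathrm{p}}(\abs{\cP}\cdot\rT_0)^{-\delta}$, exactly as in~\cite{baumgarten2025budgeted}.

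The routine part is the exponent bookkeeping that turns the three-regime cost bounds of~\Cref{thm:convergence-mlsgd} into powers of $\mathrm{Mem}_0$ and $\rT_0$. The main obstacle is the Amdahl-type decomposition underlying the upper bound: one must justify that the complexity rate $\delta$ applies \emph{separately} to the serial and parallel portions with their respective effective budgets, which requires reasoning about how the knapsack optimizer of~\Cref{problem:knapsack-ocp} allocates samples $M_{k,\ell}$ across levels and across the $\abs{\cP}$ processing units, rather than merely inverting a single global cost bound. A secondary delicate point is the irreducibility claim in the lower bound: \Cref{assumption:mlmc} only furnishes \emph{upper} bounds on the bias, so the genuine lower bound relies on the bias saturating at order $h_L^{\alpha}$ at the finest affordable resolution, which is precisely where feasibility of the memory constraint must be invoked.
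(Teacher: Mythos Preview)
Your proposal is correct and follows the same two-part structure as the paper's (very terse) proof: the lower bound comes from the memory constraint fixing a floor on the finest mesh width and hence an irreducible bias of order $h_L^{\alpha}$, while the upper bound comes from inverting the $\epsilon$-cost complexity of \Cref{thm:convergence-mlsgd} and combining it with a parallelization model. One small correction: the paper explicitly invokes \emph{Gustafson's law} (weak-scaling, additive form) rather than an Amdahl-type argument; your description of the serial portion seeing budget $\rT_0$ and the parallel portion seeing $\abs{\cP}\cdot\rT_0$ is in fact the Gustafson picture, so only the label is off. Your flagged subtleties---that \Cref{assumption:mlmc} gives only upper bounds on the bias, and that the rate $\delta$ must be justified separately on the serial and parallel portions---are genuine, but the paper does not address them either and simply defers to~\cite{baumgarten2024fully} and~\cite{baumgarten2025budgeted}.
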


\begin{proof}
    Following~\cite{baumgarten2024fully, gustafson1988reevaluating},
    we state revised arguments for the right-hand side.
    Let $\rC_\epsilon^{\mathrm{ref}}$ denote the serial execution time required to reach accuracy $\epsilon$.
    Decomposing it into a parallelizable $\lambda_{\rp} \in [0, 1)$ fraction
    and a non-parallelizable fraction $(1 - \lambda_{\rp})$, we write
    $\rC_\epsilon^{\mathrm{ref}} = (1 - \lambda_{\rp}) \rC_\epsilon^{\mathrm{ref}} + \lambda_{\rp} \rC_\epsilon^{\mathrm{ref}}$.
    On parallel hardware, the execution time reduces to
    $\rC_\epsilon^{\mathrm{par}} = (1 - \lambda_{\rp}) \rC_\epsilon^{\mathrm{ref}} + \lambda_{\rp} f(\abs{\cP}) \rC_\epsilon^{\mathrm{ref}}$,
    where $f \colon \RR_{\geq 0} \to (0,1]$ is monotone decreasing.
    The resulting reduction factor is
    $\rR = \rC_\epsilon^{\mathrm{par}} / \rC_\epsilon^{\mathrm{ref}} = (1 - \lambda_{\rp}) + \lambda_{\rp} f(\abs{\cP}) < 1$.
    Now fix a time budget $\rT_0$.
    Inverting~\eqref{eq:cost-mlsgd} yields $\epsilon_{\rs} \lesssim \rT_0^{-\delta}$ on serial hardware.
    For parallel hardware, the reduction factor $\rR$ gives
    $\epsilon_{\rp} \lesssim \rR \cdot \rT_0^{-\delta} = (1 - \lambda_{\rp}) \rT_0^{-\delta} + \lambda_{\rp} f(\abs{\cP}) \rT_0^{-\delta}$.
    The stated bound follows if $f(|\cP|)=|\cP|^{-\delta}$,
    which by~\eqref{eq:cost-mlsgd} holds when each processing
    unit handles a sufficiently large share of the total workload.

    For the left-hand side, we know by using the multiindex algorithm of~\cite{baumgarten2025budgeted}
    that the memory footprint is dominated by the largest level $L$
    and that in the limit of infinite computing time the only error component
    remaining is the bias $h_L^\alpha \sim \norm{\EE[\bq_L - \bq]}_W \lesssim \tnorm{\bz_L^{(K)} - \bz^*}_W$
    by~\eqref{eq:assumption-alpha-q}.
    To enforce~\eqref{eq:knapsack-ocp-mem-constraints},
    we have to ensure that $m_k h_L^{-\gamma_{\mathrm{Mem}}} < \mathrm{Mem}_0$
    using assumption~\eqref{eq:assumption-gamma-mem}.
    Thus, $h_L > (m_k / \mathrm{Mem}_0)^{1/\gamma_{\mathrm{Mem}}}$, which implies
    $c_{\mathrm{Mem}} \mathrm{Mem}_0^{-\alpha/\gamma_{\mathrm{Mem}}} < \tnorm{\bz_L^{(K)} - \bz^*}_W$.
\end{proof}

    \subsection{Algorithm}
\label{subsec:adaptivity-and-budgeting-algorithm}

Building upon~\Cref{alg:bsgd} and~\Cref{alg:mlsgd},
we outline again the method as functional pseudocode.
Since \Cref{problem:knapsack-ocp} is intractable,
we use the estimators given in~\Cref{sec:multilevel-monte-carlo}
in combination with the theory from~\Cref{subsec:mlsgd-analysis}.
We define the global variables for the leftover time budget $\rT_k$
and the total available memory $\mathrm{Mem}_k$
(we store the history of these values along the optimization steps $k$)
and initialize them with the given $\rT_0$
and $\mathrm{Mem}_0$ from~\Cref{problem:knapsack-ocp}.
We further set the constants for the error target reduction $\eta \in (0,1]$
and the bias-variance tradeoff $\theta \in (0,1)$ to start the algorithm.

\smallskip

\paragraph{The Init function} Starting the call stack in
the \texttt{Init} function with an initial guess on $\bz_{L}^{(0)}$,
an initial multilevel batch $\tset{M_{0, \ell}}_{\ell=0}^{L_0}$,
and the initial step size $t_0$,
the \texttt{MultiLevelEstimation} function
and the \texttt{GradientDescent} function from~\Cref{alg:mlsgd} are called.
This gives the initial estimates on the adjoint $E^{\text{ML}}[\bq_{L}^{(0)}]$,
on the initial objective $J^{\text{ML}}(\bz_{L}^{(0)})$
and with that, the gradient $E^{\text{ML}}[\bg_{L}^{(0)}]$
and the first updated control $\bz_{L}^{(1)}$.
The computational cost is dominated by the \texttt{MultiLevelEstimation} function,
thus after its first call, the time budget and the leftover memory are updated with
$\rT_{1} \leftarrow \rT_{0} - \rC^{\rC\rT}_0$ and $\mathrm{Mem}_{1} \leftarrow \mathrm{Mem}_{0} - \rC^{\mathrm{Mem}}_0$,
where the cost measurements $\rC^{\rC\rT}_k$ and $\rC^{\mathrm{Mem}}_k$ are given by
\begin{equation}
    \label{eq:cost-measurements}
    \rC^{\rC\rT}_k
    = \sum_{\ell=0}^{\,L_{k\phantom{,}}} \sum_{m=1}^{M_{k,\ell}} \rC^{\rC\rT} \big(({\bv}_\ell^{(m,k)}, \bp_\ell^{(m,k)}) \big)
    \qquad \text{and} \qquad
    \rC^{\mathrm{Mem}}_k
    \sim m_k h_{L_k}^{-\gamma_{\mathrm{Mem}}}.
\end{equation}
We refer to~\cite{baumgarten2025budgeted} for a more detailed discussion on the time cost
measurement in parallel and why the memory cost can be bounded by the highest level $L_k$
following the assumption~\eqref{eq:assumption-gamma-mem}.
Subsequently, we call the \texttt{BMLSGD} function
with the new control $\bz_{L}^{(1)}$
and the first error target
$\epsilon_1 \leftarrow \eta \cdot \bnorm{E^{\text{ML}}[\bg_{L}^{(0)}]}_{W}$
(cf.~\cite{baumgarten2024fully} for a discussion why $\eta$
is relevant for the parallel efficiency).

\smallskip

\paragraph{The BMLSGD function}
The \texttt{BMLSGD} function in~\Cref{alg:bmlsgd}
acts as the main driver of the algorithm by recursively
minimizing~\eqref{eq:knapsack-ocp-error}, while
enforcing the computational constraints~\eqref{eq:knapsack-ocp-ct-constraints}
and~\eqref{eq:knapsack-ocp-mem-constraints}.
This involves checking the leftover time budget $\rT_k$ in a guard clause of the function,
and adapting the largest level $L_k$ if the FE error, estimated with~\eqref{eq:squared-bias-estimate-mlmc},
is larger than a predefined fraction of the target error $\theta \,\epsilon_k$.
To find the optimal multilevel batch $\tset{M_{k, \ell}^{\text{opt}}}_{\ell=0}^{L_k}$ in each iteration $k$,
we apply techniques introduced
in~\cite{giles2015multilevel, baumgarten2025budgeted}
utilizing estimates from the previous optimization step $k - 1$.
Particularly, we use
\begin{equation}
    \label{eq:optimal-Ml-estimated}
    M_{k, \ell}^{\text{opt}}
    = \ceil {
        \roundlr{\sqrt{\theta} \epsilon_{k}}^{-2}\!\!
        \sqrt{\frac{s_\ell^2[\bp_\ell^{(k-1)}]}{(M_{k - 1, \ell} - 1) \, \widehat{\rC}^{\rC\rT}_{k - 1, \ell}}}
        \roundlr{
            \sum_{\ell'=0}^{L_{k}}
            \sqrt{\frac{s_{\ell'}^2[\bp_{\ell'}^{(k-1)}] \, \widehat{\rC}^{\rC\rT}_{k-1, \ell'}}{M_{k - 1, \ell'} - 1}} \,\,
        }
    }
\end{equation}
to get the optimal number of samples $M_{k, \ell}^{\text{opt}}$ on each level $\ell$
where $s_\ell^2[\bp_\ell^{(k-1)}]$ is the second order sum from~\eqref{eq:sampling-error-estimator}.
The estimate $\widehat{\rC}^{\rC\rT}_{k - 1, \ell}$ is the averaged cost of the previous
iteration $k - 1$ on level $\ell$, thus with the sample size $M_{k - 1, \ell}$, it is
\begin{align*}
    \widehat{\rC}^{\rC\rT}_{k - 1, \ell}
    = \frac{1}{M_{k - 1, \ell}} \sum_{m=1}^{M_{k - 1, \ell}}
    \rC^{\rC\rT} \big(({\bv}_{\ell}^{(m,k-1)}, \bp_{\ell}^{(m,k-1)}) \big).
\end{align*}
Once the new multilevel batch $\tset{M_{k, \ell}^{\text{opt}}}_{\ell=0}^{L_k}$ is chosen,
the \texttt{NotFeasible} function is called.
This function stops the computation if the new batch
is expected to take longer than the leftover time budget $\rT_k$ or,
if, by appending a new level, the memory constraint is violated.
Else, we call the \texttt{MultiLevelEstimation} function to compute new estimates on the
adjoint $E^{\text{ML}}[\bq_{L}^{(k)}]$ and the objective $J^{\text{ML}}(\bz_{L}^{(k)})$
using the new multilevel batch $\tset{M_{k, \ell}^{\text{opt}}}_{\ell=0}^{L_k}$.
We then determine the new control $\bz_{L}^{(k+1)}$ by calling the
\texttt{AdaptiveGradientDescent} function, update the budgets with~\eqref{eq:cost-measurements},
and start a new optimization step with the
\texttt{BMLSGD} function taking the new control $\bz_{L}^{(k+1)}$,
and the new error target
$\epsilon_{k+1} \leftarrow \eta \cdot \bnorm{E^{\text{ML}}[\bg_{L}^{(k)}]}_{W}$
as input.

\smallskip

\paragraph{Feasibility check}
Within the \texttt{NotFeasible} function, the upcoming computational cost is estimated
and checked against the remaining budgets:
\begin{align*}
    \label{eq:cost-constraints}
    \underbrace{\sum_{\ell=0}^{L_k} M_{k,\ell}^{\text{opt}} \widehat{\rC}^{\rC\rT}_{k - 1, \ell} > \rT_{k}}_{\text{new batch is too expensive in CPU-time}}
    \quad \text{or} \quad
    \underbrace{m_k h_{L_k}^{-\gamma_{\mathrm{Mem}}} > {\mathrm{Mem}}_{k}}_{\text{new batch is too expensive in memory}}
\end{align*}
We recall that $\mathrm{Mem}_{k}$ is the leftover memory, i.e.~the difference between the
initial memory budget ${\mathrm{Mem}}_{0}$ and the memory footprint of
the last computational result.
In either case, if the new multilevel batch is too expensive
in CPU-time or memory, the algorithm stops and returns the current control
as the solution to~\Cref{problem:knapsack-ocp}.

\smallskip

\paragraph{Adaptive Gradient Descent}
By \Cref{thm:convergence-mlsgd}, we do have to determine besides the optimal multilevel
batch $\tset{M_{k, \ell}^{\text{opt}}}_{\ell=0}^{L_k}$ also the step size $t_k$ in
order to minimize~\eqref{eq:quadratic-minimization}.
To this end, we use the adaptive step size techniques
introduced in~\cite{koehne2024adaptivestepsizespreconditioned}
in combination with the already computed MLMC estimates
and the gradient $E^{\text{ML}}[\bg_{L}^{(k)}]$
used in~\eqref{eq:mlsgd-iteration}.
Particularly, we compute
\begin{equation}
    \label{eq:adaptive-step-size}
    t_{k} =
    \frac{\tnorm{E^{\text{ML}}[\bg_{L}^{(k)}]}_{W}^2 - \widehat{\err}^{\text{sam}}_k}
    {\widehat{c}_{\mathrm{Lip}} \tnorm{E^{\text{ML}}[\bg_{L}^{(k)}]}_{W}^2}
    \quad \text{with} \quad
    \widehat{c}_\mathrm{Lip} =
    \frac{\tnorm{E^{\text{ML}}[\bg_{L}^{(k)}]-E^{\text{ML}}[\bg_{L}^{(k-1)}]}_{W}}
    {\tnorm{t_{k-1}E^{\text{ML}}[\bg_{L}^{(k-1)}]}_{W}},
\end{equation}
where $\widehat{\err}_{\text{sam}}$ is determined with~\eqref{eq:sampling-error-estimator},
and $\widehat{c}_\mathrm{Lip}$ is an estimate of the Lipschitz
constant following~\eqref{eq:lipschitz-continuity-gradient}
which takes the previous gradient $E^{\text{ML}}[\bg_{L}^{(k-1)}]$ from memory.
With the new step size $t_k$, we finally compute the new control
$\bz_{L}^{(k+1)}$ following the update rule in~\eqref{eq:mlsgd-iteration}
and return to the \texttt{BMLSGD} function again.

\smallskip

\begin{algorithm}
    \caption{Budgeted Multilevel Stochastic Gradient Descent (BMLSGD)}
    \label{alg:bmlsgd}
    \begin{align*}
        &\texttt{global const } (\eta, \, \theta)^\top \leftarrow (0.9, 0.5)^\top
        \hspace{6.2mm} \text{// Tested choice for both values} \\[1mm]
        &\texttt{global } (\rT_k, \, \mathrm{Mem}_k)^\top \hspace{1mm} \leftarrow (\rT_0, \, \mathrm{Mem}_0)^\top
        \hspace{1mm} \text{// Initialize with time and memory budget} \\[5mm]
        &\texttt{function Init}(\bz_{L}^{(0)} \!, \tset{M_{0, \ell}}_{\ell=0}^{L}, t_0) \colon
        \hspace{0.1cm} \text{// Calls functions from~\Cref{alg:mlsgd}} \\[-1mm]
        &\quad
        \begin{cases}
            E^{\text{ML}}[\bq_{L}^{(0)}], \, J^{\text{ML}}(\bz_{L}^{(0)})
            \hspace{0.6cm} \leftarrow &\hspace{0.3cm} \texttt{MultiLevelEstimation}(\bz_{L}^{(0)} \!, \tset{M_{0, \ell}}_{\ell=0}^{L}) \\[1mm]
            \hspace{1.0cm} \bz_{L}^{(1)} \!, \, E^{\text{ML}}[\bg_{L}^{(0)}]
            \hspace{0.6cm} \leftarrow &\hspace{0.3cm} \texttt{GradientDescent}(\bz_{L}^{(0)} \!, E^{\text{ML}}[\bq_{L}^{(0)}], t_0) \\[1mm]
            \hspace{1.08cm} (\rT_1, \, \mathrm{Mem}_1)^\top\hspace{0.68cm}
            \leftarrow &\hspace{0.3cm} (\rT_{0} - \rC_0^{\mathrm{CT}} \!, \, \mathrm{Mem}_0 - \rC^\mathrm{Mem}_0)^\top \\[1mm]
            \texttt{return }
            &\hspace{0.3cm} \texttt{BMLSGD}(\bz_{L}^{(1)} \!, \, \eta \cdot \bnorm{E^{\text{ML}}[\bg_{L}^{(0)}]}_{W})
        \end{cases} \\ \\[-1mm]
        &\texttt{function BMLSGD}(\bz_{L}^{(k)} \!, \, \epsilon_k) \colon \\[-1mm]
        &\quad
        \begin{cases}
            \texttt{if } \rT_{k} < 0.05 \, \rT_{0} \colon
            &\texttt{return } \bz_L^{(k)} \\[1mm]
            \texttt{if } \widehat{\err}^{\text{num}}_{k-1} \geq (1 - \theta) \, \epsilon_{k}^2 \colon
            &{L_{k}} \leftarrow {L_{k-1}} + 1 \\[1mm]
            \texttt{for } \ell=0,\dots, L_k \colon
            &M_{k, \ell}^{\text{opt}} \leftarrow \eqref{eq:optimal-Ml-estimated} \\[1mm]
            \texttt{if } \texttt{NotFeasible}(\tset{M_{k, \ell}^{\text{opt}}}_{\ell=0}^{L_k}) \colon
            \hspace{-0.2cm}&\texttt{return } \bz_L^{(k)} \\[4mm]
            E^{\text{ML}}[\bq_{L}^{(k)}], \, J^{\text{ML}}(\bz_{L}^{(k)})
            \hspace{0.55cm} \leftarrow &\texttt{MultiLevelEstimation}(\bz_{L}^{(k)} \!, \tset{M_{k, \ell}^{\text{opt}}}_{\ell=0}^{L_k}) \\[1mm]
            \hspace{0.65cm} \bz_{L}^{(k+1)} \!, \, E^{\text{ML}}[\bg_{L}^{(k)}]
            \hspace{0.55cm} \leftarrow &\texttt{AdaptiveGradientDescent}(\bz_{L}^{(k)} \!, E^{\text{ML}}[\bq_{L}^{(k)}]) \\[1mm]
            \hspace{0.6cm} (\rT_{k+1}, \, \mathrm{Mem}_{k+1})^\top\hspace{0.4cm}
            \leftarrow &\hspace{0.0cm} (\rT_{k} - \rC_k^{\mathrm{CT}} \!, \, \mathrm{Mem}_k - \rC^\mathrm{Mem}_k)^\top \\[1mm]
            \texttt{return } &\texttt{BMLSGD}(\bz_{L}^{(k+1)} \!, \, \eta \cdot \bnorm{E^{\text{ML}}[\bg_{L}^{(k)}]}_W) \\
        \end{cases} \\ \\[-1mm]
        &\texttt{function NotFeasible}(\tset{M_{k, \ell}^{\text{opt}}}_{\ell=0}^{L_k}) \colon \\[-1mm]
        &\quad
        \begin{cases}
            \texttt{if } \sum_{\ell=0}^{L_k} M_{k,\ell}^{\text{opt}} \widehat{\rC}^{\rC\rT}_{k - 1, \ell} > \rT_{k} \colon
            &\hspace{0.1cm} \texttt{return } \texttt{true} \\[1mm]
            \texttt{if } m_k h_{L_k}^{-\gamma_{\mathrm{Mem}}} > {\mathrm{Mem}}_{k} \colon
            &\hspace{0.1cm} \texttt{return } \texttt{true} \\[1mm]
            \texttt{else} \colon
            &\hspace{0.1cm} \texttt{return } \texttt{false}
        \end{cases} \\ \\[-1mm]
        &\texttt{function } \texttt{AdaptiveGradientDescent}(\bz_{L}^{(k)} \!, E^{\text{ML}} [\bq_{L}^{(k)}]) \colon \\[-1mm]
        &\quad
        \begin{cases}
            E^{\text{ML}} [\bg_{L}^{(k)}]
            &\leftarrow \hspace{0.3cm}
            \lambda \bz_{L}^{(k)} - E^{\text{ML}} [\bq_{L}^{(k)}] \\[2mm]
            \hspace{0.2cm} \widehat{c}_\mathrm{Lip}
            &\leftarrow \hspace{0.3cm}
            \frac{\tnorm{E^{\text{ML}}[\bg_{L}^{(k)}]-E^{\text{ML}}[\bg_{L}^{(k-1)}]}_{W}}
            {\tnorm{t_{k-1}E^{\text{ML}}[\bg_{L}^{(k-1)}]}_{W}}
            \hspace{0.4cm} \text{// } t_{k-1}, E^{\text{ML}}[\bg_{L}^{(k-1)}] \text{ from memory} \\[2mm]
            \hspace{0.2cm} t_{k}
            &\leftarrow \hspace{0.3cm}
            \frac{\tnorm{E^{\text{ML}}[\bg_{L}^{(k)}]}_{W}^2 - \widehat{\err}^{\text{sam}}_k}
            {\widehat{c}_{\mathrm{Lip}} \tnorm{E^{\text{ML}}[\bg_{L}^{(k)}]}_{W}^2}
            \hspace{1.15cm} \text{// } \widehat{\err}^{\text{sam}}_k
            \text{ estimated with~\eqref{eq:sampling-error-estimator}} \\[2mm]
            \hspace{0.2cm} \bz_{L}^{(k+1)}
            &\leftarrow \hspace{0.3cm}
            \pi_{Z} \big(\bz_{L}^{(k)} - t_{k} E^{\text{ML}} [\bg_{L}^{(k)}] \big) \\[2mm]
            \texttt{return } &\hspace{0.75cm} \bz^{(k+1)}, \,\, E^{\text{ML}} [\bg_{L}^{(k)}]
        \end{cases}
    \end{align*}
\end{algorithm}

    \subsection{Experiments}
\label{subsec:bmlsgd-experiments}

We now present our final numerical results,
evaluating the performance of \Cref{alg:bmlsgd}
on the PDE example from~\Cref{subsec:example-problem}.
In particular, we focus on estimating the convergence
rate~$\delta$ as defined in~\Cref{cor:upper-and-lower-bound}.
Following the approach of~\cite{baumgarten2024fully},
we estimate $\delta$ via linear regression on the
logarithmic decay of the gradient norm over time:
\begin{align*}
    {\argmin_{(\widehat{\delta}, \, \widehat{c}_{\delta})}}
    \sum_{k} \big( \log_2 \big(\bnorm{E^{\text{ML}}[\bg_L^{(k)}]}_{W} \big) - \widehat{\delta}
    \log_2(\rT_{k}) + \widehat{c}_{\delta} \big)^2
\end{align*}
All figures in this section display estimates of $\delta$.
To clarify the plots, we represent data points directly
(excluding early iterations within the first 60 seconds)
along with the corresponding linear fit.

\paragraph{Method comparison}
\Cref{fig:metod-comparison} presents a direct comparison
between the BMLSGD and BSGD methods.
To improve the BSGD performance,
we used a decaying step size $t_k = t_0 \cdot K^{-0.5}$
satisfying~\eqref{eq:step-size-assumption} with $t_0=250$
and increased the iteration count to $K = 150$.
With $h_\ell = 2^{-7}$ and $M_\ell = 256$,
this configuration gave the best BSGD results
within one hour of computation.
Despite these optimizations,
BMLSGD significantly outperforms BSGD on our example — achieving
comparable results 18× faster and yielding errors 5× smaller
for the same computational cost.
As shown in \Cref{fig:metod-comparison},
BMLSGD also achieves a markedly better convergence rate
of $\delta \approx 0.5$,
compared to $\delta \approx 0.37$ for BSGD.
This aligns with theoretical expectations:
for BSGD, $\delta = \tfrac{\alpha}{2\alpha+\gamma}$
(cf.~discussion in Sections~\ref{subsec:bsgd-discussion} and~\ref{sec:multilevel-monte-carlo}),
while for BMLSGD, \Cref{cor:upper-and-lower-bound} gives
$\delta = \min \tset{\tfrac{1}{2}, \tfrac{\alpha}{2\alpha + (\gamma - \beta)}}$.
Finally, BMLSGD achieves these improvements with fewer iterations,
lower bias, and reduced noise (see left plot of~\Cref{fig:metod-comparison}).

\begin{figure}
    \begin{center}
        \includegraphics[width=0.8\textwidth]{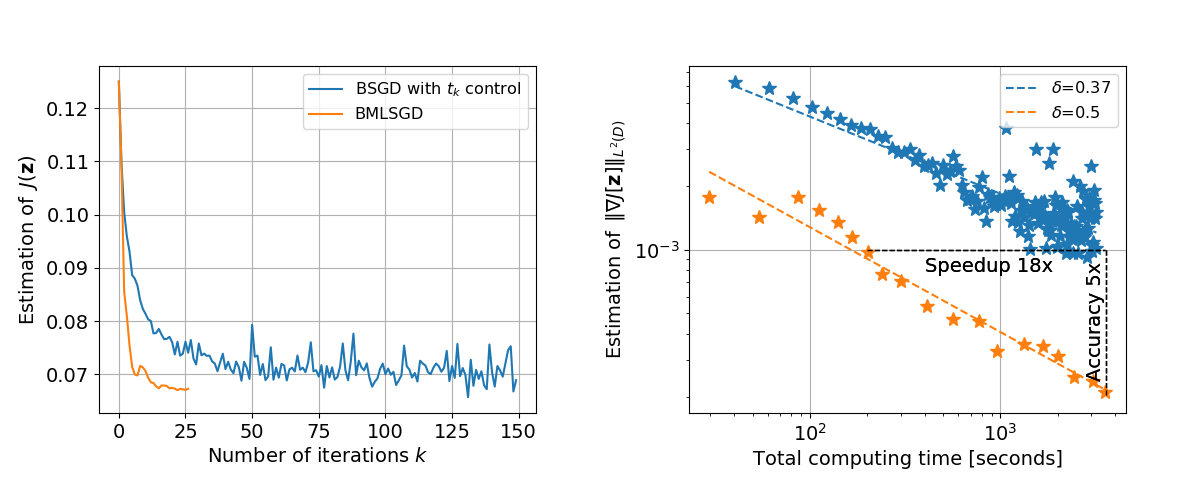}
    \end{center}

    \vspace{-0.6cm}

    \caption{Comparison of \Cref{alg:bsgd} with \Cref{alg:bmlsgd}.}
    \label{fig:metod-comparison}
\end{figure}

Unlike in Sections~\ref{subsec:bsgd-experiments} and~\ref{subsec:mlsgd-experiments},
as well as the preceding discussion, Figures~\ref{fig:adaptive} and~\ref{fig:nodes}
additionally present results on multilevel statistics and highlight the decay
of the gradient $\tnorm{E^{\text{ML}}[\bg_L^{(k)}]}_{W}$ over time.
The aim is to justify~\Cref{assumption:mlmc} by estimating the exponents
$\alpha_{\bu}, \alpha_{\bq}, \beta_{\bv}, \beta_{\bp}, \gamma_{\rC\rT}$, and $\gamma_{\mathrm{Mem}}$.
The top row in both figures displays the experimentally measured exponents:
for the state (dash-dotted line) and the adjoint solutions (dashed line)
in the left and center plots, respectively.
The rightmost plot shows the measured computational cost exponents $\gamma^{\mathrm{CT}}$ and $\gamma^{\mathrm{Mem}}$
in a dual-axis format:
the increasing solid line indicates average computational costs per level,
while the bar plot represents memory costs per level.
The horizontal lines in this plot mark the total memory footprint in megabytes.
The slight overhead of memory costs at the lowest level stems from initialization
of the algorithm and the solvers.

\paragraph{Step size choice}
Following the approach of~\cite{baumgarten2024fully},
we assess the multilevel results to identify the optimal step size rule
under a fixed computational budget through direct performance comparisons.
The algorithm is initialized with the multilevel batch configuration:
$(h_0, M_0)^\top = (2^{-4}, 64)^\top$, $(h_1, M_1)^\top = (2^{-5}, 16)^\top$,
and $(h_2, M_2)^\top = (2^{-6}, 4)^\top$.
Performance is evaluated using the same settings as in~\Cref{fig:metod-comparison}.
In~\Cref{fig:adaptive}, we compare fixed step sizes $t_k \equiv 100$ and $t_k \equiv 150$
with the adaptive step size rule~\eqref{eq:adaptive-step-size} using $t_0 = 200$.
Note that the method does not necessarily require~\eqref{eq:step-size-assumption}
to be fulfilled as long as the constant step size keeps $\rho_k$ below one in~\eqref{eq:quadratic-minimization}
The center plot in the second row of~\Cref{fig:adaptive} shows
that all three configurations result in a comparable load distribution across levels
and remain within the 1-hour computational budget
(indicated by the red line above the horizontal lines representing the total time cost of each run).
The lower right plot indicates that the adaptive step size yields the best
convergence rate and lowest gradient norm at the end of the optimization.
In the context of BMLSGD, the adaptive rule utilizes each batch optimally,
thereby enabling larger batches that lead to improved parallel efficiency.

\paragraph{Node scaling}
Finally, we demonstrate that the BMLSGD method scales well with
increased computational resources (cf.~\Cref{fig:nodes}).
To this end, we run the method with adaptive step size and $t_0 = 200$
on $\abs{\cP} = 64$ (1 node), $\abs{\cP} = 256$ (4 nodes), and $\abs{\cP} = 1024$ (16 nodes).
Notably, the method leverages the additional resources to compute more
samples—and, for $\abs{\cP} = 1024$, also includes an additional
level (cf.~lower left plot of~\Cref{fig:nodes} showing the total
number of samples $M_\ell$ over the optimization).
As shown in the lower right plot of~\Cref{fig:nodes},
and similarly observed in~\cite{baumgarten2024fully},
increasing computational resources improves the solution quality.
The smaller gap between the green ($\abs{\cP} = 1024$)
and orange ($\abs{\cP} = 256$) lines,
compared to that between the orange and blue ($\abs{\cP} = 64$),
suggests diminishing parallel efficiency with more nodes.
This is due to the inherently unparallelizable portion $\lambda_{\rp}$
of the code (see~\Cref{cor:upper-and-lower-bound} and~\cite{baumgarten2024fully} for further details).

\begin{figure}
    \hspace{-8mm}
    \includegraphics[width=1.05\textwidth]{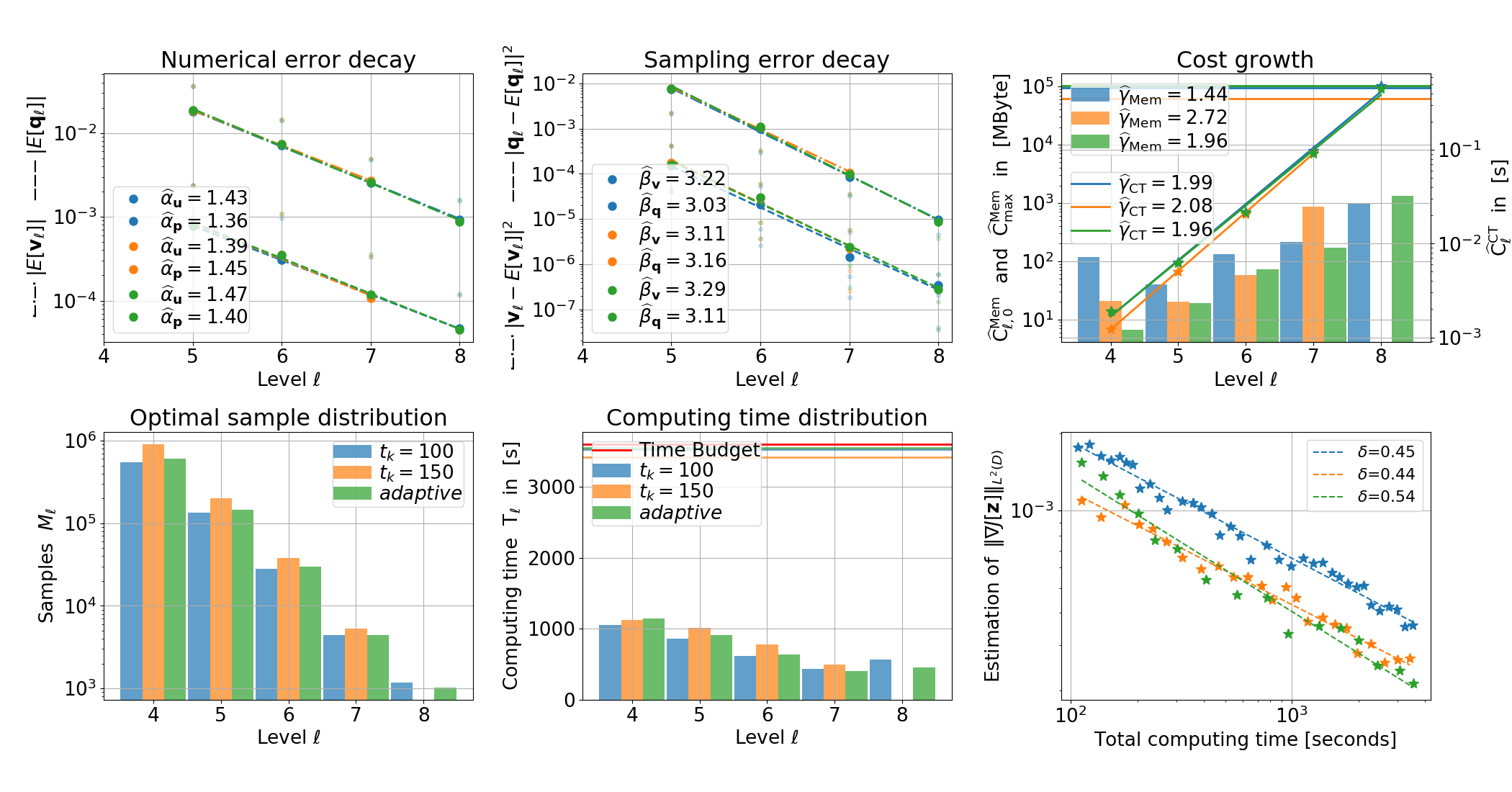}
    \vspace*{-1.0cm}
    \caption{Comparison of $t_k\equiv 100$ and $t_k\equiv 150$
        with the adaptive step size rule~\eqref{eq:adaptive-step-size}.}
    \label{fig:adaptive}
\end{figure}
\begin{figure}
    \hspace{-8mm}
    \includegraphics[width=1.05\textwidth]{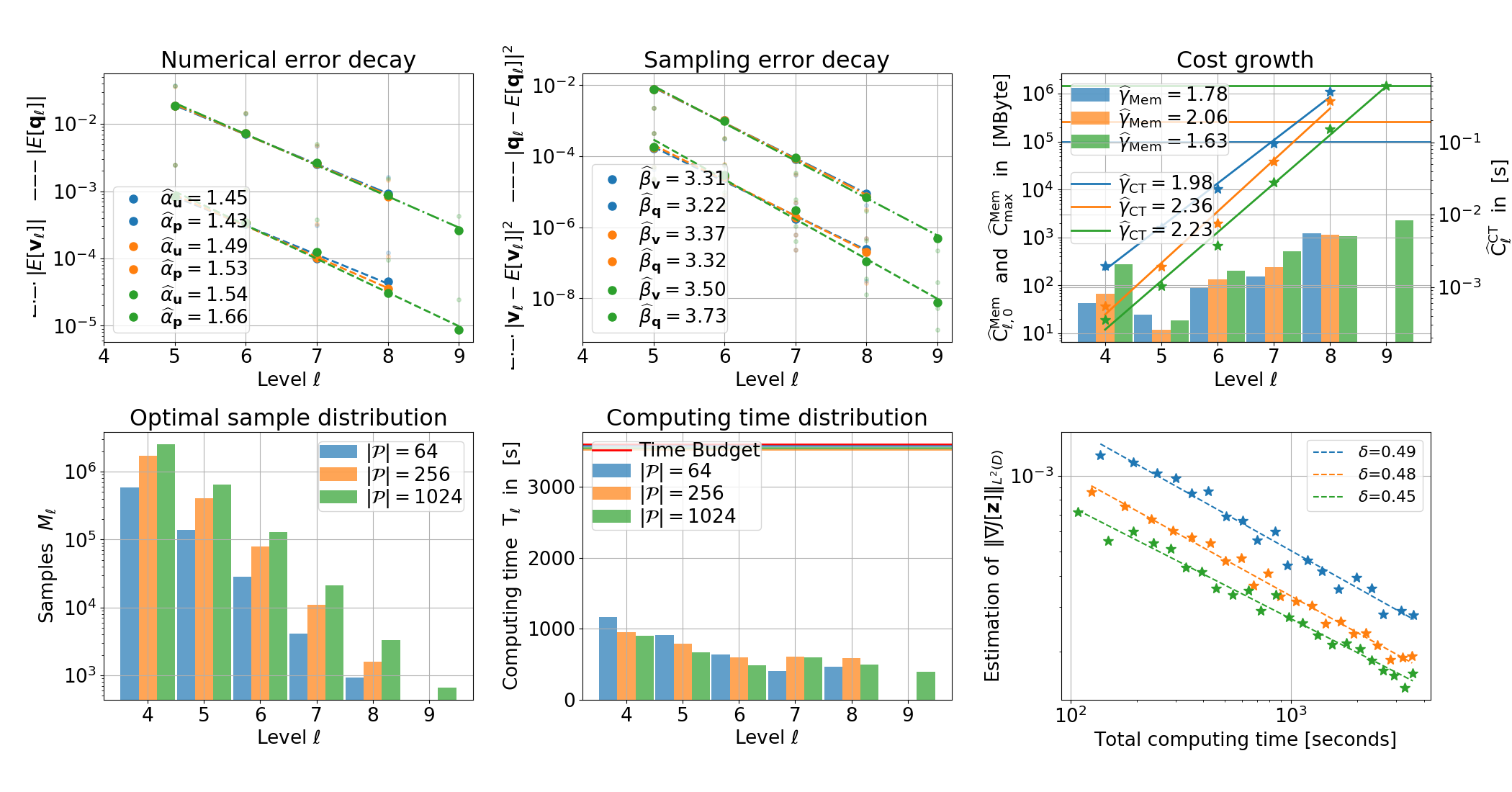}
    \vspace*{-1.0cm}
    \caption{CPU-scaling experiment with $\abs{\cP}=64$, $\abs{\cP}=256$ and $\abs{\cP}=1024$.}
    \label{fig:nodes}
\end{figure}

    \section{Outlook and Conclusion}\label{sec:discussion-and-outlook}

In this paper, we introduced the MLSGD method and its budgeted variant.
We showed that the method solves optimal control problems of the
form~\eqref{problem:ocp} with a linear convergence rate in the number of optimization steps.
The total error is controlled via multilevel Monte Carlo estimation,
as established in \Cref{lem:error-gradient-estimation},
\Cref{thm:convergence-mlsgd}, and \Cref{cor:convergence-mlsgd}.
Our numerical experiments in~\Cref{subsec:mlsgd-experiments}
and~\Cref{subsec:bmlsgd-experiments} clearly demonstrate the superior
performance of MLSGD compared to the baseline BSGD method.
In particular, the budgeted variant of~\Cref{sec:adaptivity-and-budgeting}
shows significant improvements in convergence speed, accuracy, parallel scalability,
and robustness, while ensuring total error control relative to available CPU-time and memory,
as shown in~\Cref{cor:upper-and-lower-bound}.


Future work may explore other PDEs, such as wave equations~\cite{baumgarten2024fully}
or coupled PDEs~\cite{baumgarten2023fully}, applications which are already
supported by our software M++~\cite{baumgarten2021parallel}.
The method is designed for high-dimensional problems and large computing clusters,
where its inherent parallelism, based on~\cite{baumgarten2025budgeted},
enables efficient usage of resources.
Algorithmically, the method can be extended in several directions,
such as incorporating limited-memory BFGS methods~\cite{liu1989limited}
as an alternative to gradient descent,
or combining it with QMC methods as in~\cite{guth2021quasi, guth2024parabolic}.
Further improvements may also be achieved by refining the step size strategies
and by using new averaging schemes~\cite{kohne2025exponential}.
We also suspect that the MLSGD method can excel in
risk-averse PDE-constrained optimization using conditional value-at-risk (CVaR) estimation,
as described in~\cite{kouri2016risk}, potentially leveraging the multilevel techniques
introduced in~\cite{ayoul2023quantifying} to estimate the CVaR.
Lastly, we want to point out again that the MLSGD method
is not limited to PDE-constrained optimization in application.
We note that simultaneously with our work,
a related formulation to the MLSGD method appeared in
\cite{rowbottom2025multi}, focusing on operator learning.
We believe that the findings in our paper,
e.g.~how to realize adaptivity and parallel scalability,
will improve future applications of the MLSGD method
in machine learning.

    \section*{Acknowledgments}

We thank Christian Wieners, Sebastian Krumscheid and Robert Scheichl
for their feedback and advice on the presented work.
We further acknowledge the technical support by the National
High-Performance Computing Center (NHR) at KIT and the possibility
to access to the HoreKa supercomputer.
LLMs were used to correct some spelling and grammar of this article.
Funded by the Deutsche Forschungsgemeinschaft (DFG, German Research Foundation) – Project-ID 258734477 – SFB 1173.

    \bibliographystyle{ieeetr}

    \bibliography{bibliography}

@article{baumgarten2024fully,
    title = {A Fully Parallelized and Budgeted Multilevel {M}onte {C}arlo Method and the Application to Acoustic Waves},
    author = {Baumgarten, Niklas and Krumscheid, Sebastian and Wieners, Christian},
    journal = {SIAM/ASA Journal on Uncertainty Quantification},
    volume = {12},
    number = {3},
    pages = {901--931},
    year = {2024},
    publisher = {SIAM}
}

@article{geiersbach2023optimization,
    title = {Optimization of piecewise smooth shapes under uncertainty using the example of {N}avier-{S}tokes flow},
    author = {Geiersbach, Caroline and Suchan, Tim and Welker, Kathrin},
    journal = {arXiv preprint arXiv:2308.07742},
    year = {2023}
}

@inproceedings{khirirat2017mini,
    title = {Mini-batch gradient descent: Faster convergence under data sparsity},
    author = {Khirirat, Sarit and Feyzmahdavian, Hamid Reza and Johansson, Mikael},
    booktitle = {2017 IEEE 56th Annual Conference on Decision and Control (CDC)},
    pages = {2880--2887},
    year = {2017},
    organization = {IEEE}
}

@article{kingma2014adam,
    title = {Adam: A method for stochastic optimization},
    author = {Kingma, Diederik P and Ba, Jimmy},
    journal = {arXiv preprint arXiv:1412.6980},
    year = {2014}
}

@article{lindgren2011explicit,
    title = {An explicit link between {G}aussian fields and {G}aussian {M}arkov random fields: the stochastic partial differential equation approach},
    author = {Lindgren, Finn and Rue, H{\aa}vard and Lindstr{\"o}m, Johan},
    journal = {Journal of the Royal Statistical Society Series B: Statistical Methodology},
    volume = {73},
    number = {4},
    pages = {423--498},
    year = {2011},
    publisher = {Oxford University Press}
}

@article{bierig2015convergence,
    title = {Convergence analysis of multilevel {M}onte {C}arlo variance estimators and application for random obstacle problems},
    author = {Bierig, Claudio and Chernov, Alexey},
    journal = {Numerische Mathematik},
    volume = {130},
    pages = {579--613},
    year = {2015},
    publisher = {Springer}
}

@article{van2019robust,
    title = {Robust optimization of {PDE}s with random coefficients using a multilevel {M}onte {C}arlo method},
    author = {Van Barel, Andreas and Vandewalle, Stefan},
    journal = {SIAM/ASA Journal on Uncertainty Quantification},
    volume = {7},
    number = {1},
    pages = {174--202},
    year = {2019},
    publisher = {SIAM}
}

@article{guth2021quasi,
    AUTHOR = {Guth, Philipp A. and Kaarnioja, Vesa and Kuo, Frances Y. and
 Schillings, Claudia and Sloan, Ian H.},
    TITLE = {A quasi-{M}onte {C}arlo method for optimal control under
 uncertainty},
    JOURNAL = {SIAM/ASA J. Uncertain. Quantif.},
    FJOURNAL = {SIAM/ASA Journal on Uncertainty Quantification},
    VOLUME = {9},
    YEAR = {2021},
    NUMBER = {2},
    PAGES = {354--383},
    MRCLASS = {49M41 (49J20 49N30 65C05)},
    MRNUMBER = {4241521},
}

@article{geiersbach2019projected,
    AUTHOR = {Geiersbach, Caroline and Pflug, Georg Ch.},
    TITLE = {Projected stochastic gradients for convex constrained problems
 in {H}ilbert spaces},
    JOURNAL = {SIAM J. Optim.},
    FJOURNAL = {SIAM Journal on Optimization},
    VOLUME = {29},
    YEAR = {2019},
    NUMBER = {3},
    PAGES = {2079--2099},
    ISSN = {1052-6234},
    MRCLASS = {62L20 (35Q93 35R60 60H35)},
    MRNUMBER = {3986565},
    MRREVIEWER = {Snje\v{z}ana Lubura Strunjak},
}

@article{baumgarten2021parallel,
    AUTHOR = {Baumgarten, Niklas and Wieners, Christian},
    TITLE = {The parallel finite element system {M}++ with integrated
 multilevel preconditioning and multilevel {M}onte {C}arlo
 methods},
    JOURNAL = {Comput. Math. Appl.},
    FJOURNAL = {Computers \& Mathematics with Applications. An International
 Journal},
    VOLUME = {81},
    YEAR = {2021},
    PAGES = {391--406},
    ISSN = {0898-1221},
    MRCLASS = {65N30 (65C05 65N22 65Y15)},
    MRNUMBER = {4189816},
}

@article{giles2015multilevel,
    AUTHOR = {Giles, Michael B.},
    TITLE = {Multilevel {M}onte {C}arlo methods},
    JOURNAL = {Acta Numer.},
    FJOURNAL = {Acta Numerica},
    VOLUME = {24},
    YEAR = {2015},
    PAGES = {259--328},
    ISSN = {0962-4929},
    MRCLASS = {65C05},
    MRNUMBER = {3349310},
    MRREVIEWER = {Marco Bee},
}

@article{cliffe2011multilevel,
    AUTHOR = {Cliffe, K. A. and Giles, M. B. and Scheichl, R. and
 Teckentrup, A. L.},
    TITLE = {Multilevel {M}onte {C}arlo methods and applications to
 elliptic {PDE}s with random coefficients},
    JOURNAL = {Comput. Vis. Sci.},
    FJOURNAL = {Computing and Visualization in Science},
    VOLUME = {14},
    YEAR = {2011},
    NUMBER = {1},
    PAGES = {3--15},
    ISSN = {1432-9360},
    MRCLASS = {65C05 (35R60 60H25 62M09 65N75)},
    MRNUMBER = {2835612},
}

@article{teckentrup2013further,
    AUTHOR = {Teckentrup, A. L. and Scheichl, R. and Giles, M. B. and
 Ullmann, E.},
    TITLE = {Further analysis of multilevel {M}onte {C}arlo methods for
 elliptic {PDE}s with random coefficients},
    JOURNAL = {Numer. Math.},
    FJOURNAL = {Numerische Mathematik},
    VOLUME = {125},
    YEAR = {2013},
    NUMBER = {3},
    PAGES = {569--600},
    ISSN = {0029-599X},
    MRCLASS = {65N30 (35J25 35R60 60H25 60H35 65C05 65N75)},
    MRNUMBER = {3117512},
    MRREVIEWER = {Erich Novak},
}

@article{giles2008multilevel,
    AUTHOR = {Giles, Michael B.},
    TITLE = {Multilevel {M}onte {C}arlo path simulation},
    JOURNAL = {Oper. Res.},
    FJOURNAL = {Operations Research},
    VOLUME = {56},
    YEAR = {2008},
    NUMBER = {3},
    PAGES = {607--617},
    ISSN = {0030-364X},
    MRCLASS = {65C05 (68Q25 91B28)},
    MRNUMBER = {2436856},
    MRREVIEWER = {Gunther Leobacher},
}

@article{charrier2013finite,
    AUTHOR = {Charrier, J. and Scheichl, R. and Teckentrup, A. L.},
    TITLE = {Finite element error analysis of elliptic {PDE}s with random
 coefficients and its application to multilevel {M}onte {C}arlo
 methods},
    JOURNAL = {SIAM J. Numer. Anal.},
    FJOURNAL = {SIAM Journal on Numerical Analysis},
    VOLUME = {51},
    YEAR = {2013},
    NUMBER = {1},
    PAGES = {322--352},
    ISSN = {0036-1429},
    MRCLASS = {65N30 (35R60 60H35 65C05 65N15 65N75)},
    MRNUMBER = {3033013},
    MRREVIEWER = {Elisabeth Ullmann},
}

@article{collier2015continuation,
    AUTHOR = {Collier, Nathan and Haji-Ali, Abdul-Lateef and Nobile, Fabio
 and von Schwerin, Erik and Tempone, Ra\'{u}l},
    TITLE = {A continuation multilevel {M}onte {C}arlo algorithm},
    JOURNAL = {BIT},
    FJOURNAL = {BIT. Numerical Mathematics},
    VOLUME = {55},
    YEAR = {2015},
    NUMBER = {2},
    PAGES = {399--432},
    ISSN = {0006-3835},
    MRCLASS = {65C05 (35R60 65N22 65N75)},
    MRNUMBER = {3348197},
    MRREVIEWER = {Erich Novak},
}

@article{kouri2014multilevel,
    AUTHOR = {Kouri, D. P.},
    TITLE = {A multilevel stochastic collocation algorithm for optimization
 of {PDE}s with uncertain coefficients},
    JOURNAL = {SIAM/ASA J. Uncertain. Quantif.},
    FJOURNAL = {SIAM/ASA Journal on Uncertainty Quantification},
    VOLUME = {2},
    YEAR = {2014},
    NUMBER = {1},
    PAGES = {55--81},
    MRCLASS = {49M15 (65K05 65N35 90C15)},
    MRNUMBER = {3283900},
    MRREVIEWER = {Daniel Wachsmuth},
}

@article{geiersbach2020stochastic,
    AUTHOR = {Geiersbach, Caroline and Wollner, Winnifried},
    TITLE = {A stochastic gradient method with mesh refinement for
              {PDE}-constrained optimization under uncertainty},
    JOURNAL = {SIAM J. Sci. Comput.},
    FJOURNAL = {SIAM Journal on Scientific Computing},
    VOLUME = {42},
    YEAR = {2020},
    NUMBER = {5},
    PAGES = {A2750--A2772},
    ISSN = {1064-8275},
    MRCLASS = {62L20 (35Q93 35R60 49J55 49M25 49M41 60H35)},
    MRNUMBER = {4149553},
}

@article{kutri2024dirichlet,
    title = {Dirichlet-{N}eumann {A}veraging: The {DNA} of Efficient {G}aussian Process Simulation},
    author = {Kutri, Robert and Scheichl, Robert},
    journal = {arXiv preprint arXiv:2412.07929},
    year = {2024}
}

@article{martin2021complexity,
    AUTHOR = {Martin, Matthieu and Krumscheid, Sebastian and Nobile, Fabio},
    TITLE = {Complexity analysis of stochastic gradient methods for
              {PDE}-constrained optimal control problems with uncertain
 parameters},
    JOURNAL = {ESAIM Math. Model. Numer. Anal.},
    FJOURNAL = {ESAIM. Mathematical Modelling and Numerical Analysis},
    VOLUME = {55},
    YEAR = {2021},
    NUMBER = {4},
    PAGES = {1599--1633},
    ISSN = {2822-7840},
    MRCLASS = {49M41 (35Q93 49J45 65C05 65N12 65N30)},
    MRNUMBER = {4294188},
    MRREVIEWER = {Muoi Quy Pham},
}

@article{bottou2018optimization,
    title = {Optimization methods for large-scale machine learning},
    author = {Bottou, L{\'e}on and Curtis, Frank E and Nocedal, Jorge},
    journal = {SIAM review},
    volume = {60},
    number = {2},
    pages = {223--311},
    year = {2018},
    publisher = {SIAM}
}

@article{rowbottom2025multi,
    title = {Multi-Level {M}onte {C}arlo Training of Neural Operators},
    author = {Rowbottom, James and Fresca, Stefania and Lio, Pietro and Sch{\"o}nlieb, Carola-Bibiane and Boull{\'e}, Nicolas},
    journal = {arXiv preprint arXiv:2505.12940},
    year = {2025}
}

@article{barth2011multi,
    title = {Multilevel {Monte Carlo} {Finite Element Method} for elliptic {PDE}s with stochastic coefficients},
    author = {Barth, A. and Schwab, C. and Zollinger, N.},
    journal = {Numerische Mathematik},
    volume = {119},
    number = {1},
    pages = {123--161},
    year = {2011},
    publisher = {Springer}
}

@article{ayoul2023quantifying,
    title = {Quantifying uncertain sytem outputs via the multi-level {M}onte {C}arlo method - distribution and robustness measures},
    author = {Ayoul-Guilmard, Quentin and Ganesh, Sundar and Krumscheid, Sebastian and Nobile, Fabio},
    journal = {International Journal for Uncertainty Quantification},
    volume = {13},
    number = {5},
    year = {2023},
    publisher = {Begel House Inc.}
}

@article{ali2017multilevel,
    title = {Multilevel {M}onte {C}arlo analysis for optimal control of elliptic {PDE}s with random coefficients},
    author = {Ali, Ahmad Ahmad and Ullmann, Elisabeth and Hinze, Michael},
    journal = {SIAM/ASA Journal on Uncertainty Quantification},
    volume = {5},
    number = {1},
    pages = {466--492},
    year = {2017},
    publisher = {SIAM}
}

@article{gustafson1988reevaluating,
    title = {Reevaluating {A}mdahl's law},
    author = {Gustafson, John L},
    journal = {Communications of the ACM},
    volume = {31},
    number = {5},
    pages = {532--533},
    year = {1988},
    publisher = {ACM New York, NY, USA}
}

@phdthesis{baumgarten2023fully,
    author = {Baumgarten, Niklas},
    year = {2023},
    title = {A Fully Parallelized and Budgeted Multi-level Monte Carlo Framework for Partial Differential Equations},
    publisher = {{Karlsruher Institut für Technologie (KIT)}},
    keywords = {Uncertainty Quantification, Finite Element Methods, Multi-level Monte Carlo Methods, Parallelization, High Performance Computing, Continuous Delivery},
    pagetotal = {174},
    school = {Karlsruher Institut für Technologie (KIT)},
    language = {english}
}

@article{nobile2024combination,
    title = {A combination technique for optimal control problems constrained by random {PDE}s},
    author = {Nobile, Fabio and Vanzan, Tommaso},
    journal = {SIAM/ASA Journal on Uncertainty Quantification},
    volume = {12},
    number = {2},
    pages = {693--721},
    year = {2024},
    publisher = {SIAM}
}

@article{ciaramella2024multigrid,
    title = {A multigrid solver for {PDE}-constrained optimization with uncertain inputs},
    author = {Ciaramella, Gabriele and Nobile, Fabio and Vanzan, Tommaso},
    journal = {Journal of Scientific Computing},
    volume = {101},
    number = {1},
    pages = {13},
    year = {2024},
    publisher = {Springer}
}

@article{robbins1951stochastic,
    author = {Herbert Robbins and Sutton Monro},
    title = {{A Stochastic Approximation Method}},
    volume = {22},
    journal = {The Annals of Mathematical Statistics},
    number = {3},
    publisher = {Institute of Mathematical Statistics},
    pages = {400 -- 407},
    year = {1951},
}

@article{polyak1992averaging,
    author = {Polyak, Boris and Juditsky, Anatoli},
    year = {1992},
    month = {07},
    pages = {838-855},
    title = {Acceleration of Stochastic Approximation by Averaging},
    volume = {30},
    journal = {SIAM Journal on Control and Optimization},
}

@article{toraman2023momentum,
    title = {A stochastic gradient algorithm with momentum terms for optimal control problems governed by a convection–diffusion equation with random diffusivity},
    journal = {Journal of Computational and Applied Mathematics},
    volume = {422},
    pages = {114919},
    year = {2023},
    issn = {0377-0427},
    author = {Sıtkı Can Toraman and Hamdullah Yücel},
    keywords = {PDE-constrained optimization, Uncertainty quantification, Monte Carlo, Stochastic momentum},
    abstract = {In this paper, we focus on a numerical investigation of a strongly convex and smooth optimization problem subject to a convection–diffusion equation with uncertain terms. Our approach is based on stochastic approximation where true gradient is replaced by a stochastic ones with suitable momentum term to minimize the objective functional containing random terms. A full error analysis including Monte Carlo, finite element, and stochastic momentum gradient iteration errors is done. Numerical examples are presented to illustrate the performance of the proposed stochastic approximations in the PDE-constrained optimization setting.}
}

@article{chen2024minibatch,
    title = {Efficient mini-batch stochastic gradient descent with Centroidal {V}oronoi Tessellation for {PDE}-constrained optimization under uncertainty},
    journal = {Physica D: Nonlinear Phenomena},
    volume = {467},
    pages = {134216},
    year = {2024},
    issn = {0167-2789},
    author = {Liuhong Chen and Meixin Xiong and Ju Ming and Xiaoming He},
    keywords = {PDE-constrained optimization, Optimization with uncertainty, Mini-batch stochastic gradient descent, Sampling methods, Centroidal Voronoi Tessellation},
    abstract = {The study of optimal control problems under uncertainty plays an important role in scientific numerical simulations. This class of optimization problems is frequently utilized in engineering, biology and finance. Although stochastic gradient descent is a well-known stochastic optimization technique for solving the approximate optimal control problem, it often exhibits a slow convergence rate due to the inherent variance in gradient approximation. To address this issue, we propose a more efficient stochastic optimization algorithm based on Centroidal Voronoi Tessellation sampling. This strategy reduces the error in gradient estimation compared to the conventional mini-batch stochastic gradient descent method. Our approach involves dividing the whole snapshot set into several Voronoi cells with low variance and extracting samples with good uniformity from each region to construct an unbiased estimation of the full gradient. Our method can economically and stably approximate numerical optimal control functions even with a fixed step size. Numerical results demonstrate that the proposed method is a reliable algorithm with great potential for applications in the field of optimization.}
}

@article{koehne2024adaptivestepsizespreconditioned,
    title = {Adaptive Step Sizes for Preconditioned Stochastic Gradient Descent},
    author = {Frederik Köhne and Leonie Kreis and Anton Schiela and Roland Herzog},
    year = {2024},
}

@inproceedings{weissmann2022multilevel,
    title = {Multilevel optimization for inverse problems},
    author = {Weissmann, Simon and Wilson, Ashia and Zech, Jakob},
    booktitle = {Conference on Learning Theory},
    pages = {5489--5524},
    year = {2022},
    organization = {PMLR}
}

@article{nobile2025multilevel,
    title={Multilevel quadrature formulae for
        the optimal control of random PDEs},
    author={Nobile, Fabio and Vanzan, Tommaso},
    journal={Numerische Mathematik},
    pages={1--32},
    year={2025},
    publisher={Springer},
}

@article{beiser2023adaptive,
    title={Adaptive sampling strategies for risk-averse stochastic optimization with constraints},
    author={Beiser, Florian and Keith, Brendan and Urbainczyk, Simon and Wohlmuth, Barbara},
    journal={IMA Journal of Numerical Analysis},
    volume={43},
    number={6},
    pages={3729--3765},
    year={2023},
    publisher={Oxford University Press}
}

@article{weissmann2024mean,
    title = {On the mean-field limit for {S}tein variational gradient descent: stability and multilevel approximation},
    author = {Weissmann, Simon and Zech, Jakob},
    journal = {arXiv preprint arXiv:2402.01320},
    year = {2024}
}

@article{guth2023multilevel,
    title = {Multilevel quasi-{M}onte {C}arlo for optimization under uncertainty},
    author = {Guth, Philipp A and Van Barel, Andreas},
    journal = {Numerische Mathematik},
    volume = {154},
    number = {3-4},
    pages = {443--484},
    year = {2023},
    publisher = {Springer}
}

@article{guth2024parabolic,
    title = {Parabolic {PDE}-constrained optimal control under uncertainty with entropic risk measure using quasi-{M}onte {C}arlo integration},
    key = {value},journal = {SpringerLink},
    publisher = {Springer Berlin Heidelberg},
    author = {Guth, Philipp A. and Kaarnioja, Vesa and Kuo, Frances Y. and Schillings, Claudia and Sloan, Ian H.},
    year = {2024},
    month = {Mar},
}

@article{geiersbach2023nonlinear,
    title = {A stochastic gradient method for a class of nonlinear {PDE}-constrained optimal control problems under uncertainty},
    journal = {Journal of Differential Equations},
    volume = {364},
    pages = {635-666},
    year = {2023},
    issn = {0022-0396},
    author = {Caroline Geiersbach and Teresa Scarinci},
    keywords = {PDE-constrained optimization, Stochastic gradient method, Averaged cost minimization, PDEs with randomness, Nonconvex infinite-dimensional optimization},
    abstract = {The study of optimal control problems under uncertainty plays an important role in scientific numerical simulations. This class of optimization problems is strongly utilized in engineering, biology and finance. In this paper, a stochastic gradient method is proposed for the numerical resolution of a nonconvex stochastic optimization problem on a Hilbert space. We show that, under suitable assumptions, strong or weak accumulation points of the iterates produced by the method converge almost surely to stationary points of the original optimization problem. Measurability and convergence rates of a stationarity measure are handled, filling a gap for applications to nonconvex infinite dimensional stochastic optimization problems. The method is demonstrated on an optimal control problem constrained by a class of elliptic semilinear partial differential equations (PDEs) under uncertainty.}
}

@article{noufel2016multilevelstochasticapproximation,
    author = {Noufel Frikha},
    title = {{Multi-level stochastic approximation algorithms}},
    volume = {26},
    journal = {The Annals of Applied Probability},
    number = {2},
    publisher = {Institute of Mathematical Statistics},
    pages = {933 -- 985},
    keywords = {Euler scheme, Multi-level Monte Carlo methods, Ruppert–Polyak averaging principle, stochastic approximation},
    year = {2016},
}

@article{shamir2013optimalAveragingSGD,
    title = {Stochastic Gradient Descent for Non-smooth Optimization: Convergence Results and Optimal Averaging Schemes},
    author = {Shamir, Ohad and Zhang, Tong},
    booktitle = {Proceedings of the 30th International Conference on Machine Learning},
    pages = {71--79},
    year = {2013},
    editor = {Dasgupta, Sanjoy and McAllester, David},
    volume = {28},
    number = {1},
    series = {Proceedings of Machine Learning Research},
    address = {Atlanta, Georgia, USA},
    month = {17--19 Jun},
    publisher = {PMLR},
    pdf = {http://proceedings.mlr.press/v28/shamir13.pdf},
    abstract = {Stochastic Gradient Descent (SGD) is one of the simplest and most popular stochastic optimization methods. While it has already been theoretically studied for decades, the classical analysis usually required non-trivial smoothness assumptions, which do not apply to many modern applications of SGD with non-smooth objective functions such as support vector machines. In this paper, we investigate the performance of SGD \emphwithout such smoothness assumptions, as well as a running average scheme to convert the SGD iterates to a solution with optimal optimization accuracy. In this framework, we prove that after T rounds, the suboptimality of the \emphlast SGD iterate scales as O(\log(T)/\sqrtT) for non-smooth convex objective functions, and O(\log(T)/T) in the non-smooth strongly convex case. To the best of our knowledge, these are the first bounds of this kind, and almost match the minimax-optimal rates obtainable by appropriate averaging schemes. We also propose a new and simple averaging scheme, which not only attains optimal rates, but can also be easily computed on-the-fly (in contrast, the suffix averaging scheme proposed in \citetRakhShaSri12arxiv is not as simple to implement). Finally, we provide some experimental illustrations.}
}

@Inbook{caflisch2024adjoint,
    author = "Caflisch, Russel and Yang, Yunan",
    editor = "Carrillo, Jos{\'e} Antonio and Tadmor, Eitan",
    title = "Adjoint Monte Carlo Method",
    bookTitle = "Active Particles, Volume 4: Theory, Models, Applications",
    year = "2024",
    publisher = "Springer Nature Switzerland",
    address = "Cham",
    pages = "461--505",
    abstract = "This survey explores the development of adjoint Monte Carlo methods for solving optimization problems governed by kinetic equations, a common challenge in areas such as plasma control and device design. These optimization problems are particularly demanding due to the high dimensionality of the phase space and the randomness in evaluating the objective functional, a consequence of using a forward Monte Carlo solver. To overcome these difficulties, a range of ``adjoint Monte Carlo methods'' have been devised. These methods skillfully combine Monte Carlo gradient estimators with PDE-constrained optimization, introducing innovative solutions tailored for kinetic applications. In this review, we begin by examining three primary strategies for Monte Carlo gradient estimation: the score function approach, the reparameterization trick, and the coupling method. We also delve into the adjoint-state method, an essential element in PDE-constrained optimization. Focusing on applications in the radiative transfer equation and the nonlinear Boltzmann equation, we provide a comprehensive guide on how to integrate Monte Carlo gradient techniques within both the optimize-then-discretize and the discretize-then-optimize frameworks from PDE-constrained optimization. This approach leads to the formulation of effective adjoint Monte Carlo methods, enabling efficient gradient estimation in complex, high-dimensional optimization problems.",
    isbn = "978-3-031-73423-6",
}

@book{hinze2009optimization,
    author = {Hinze, Michael and Pinnau, Rene and Ulbrich, Michael and Ulbrich, Stefan},
    title = {Optimization with PDE Constraints},
    publisher = {Springer},
    year = {2009},
    series = {Mathematical Modelling: Theory and Applications},
}

@book{pflug1996optimization,
    author = {Pflug, Georg},
    title = {Optimization of Stochastic Models},
    subtitle = {The Interface Between Simulation and Optimization},
    publisher = {Kluwer Acad. Publ.},
    year = {1996},
    series = {The Kluwer international series in engineering and computer science},
}

@book{lord2014computationalSPDEs,
    place = {Cambridge},
    series = {Cambridge Texts in Applied Mathematics},
    title = {An Introduction to Computational Stochastic PDEs},
    publisher = {Cambridge University Press},
    author = {Lord, Gabriel J. and Powell, Catherine E. and Shardlow, Tony},
    year = {2014},
    collection = {Cambridge Texts in Applied Mathematics},
}

@article{liu1989limited,
    title = {On the limited memory {BFGS} method for large scale optimization},
    author = {Liu, Dong C and Nocedal, Jorge},
    journal = {Mathematical programming},
    volume = {45},
    number = {1},
    pages = {503--528},
    year = {1989},
    publisher = {Springer}
}

@article{kohne2025exponential,
    title = {An Exponential Averaging Process with Strong Convergence Properties},
    author = {K{\"o}hne, Frederik and Schiela, Anton},
    journal = {arXiv preprint arXiv:2505.10605},
    year = {2025}
}

@article{kouri2016risk,
    title = {Risk-averse {PDE}-constrained optimization using the conditional value-at-risk},
    author = {Kouri, Drew P and Surowiec, Thomas M},
    journal = {SIAM Journal on Optimization},
    volume = {26},
    number = {1},
    pages = {365--396},
    year = {2016},
    publisher = {SIAM}
}

@misc{wieners2025mpp350,
    author = {Christian Wieners and Daniele Corallo and David Schneiderhan and Laura Stengel and Hai Dang Nguyen Pham and Niklas Baumgarten},
    keywords = {FEM and HPC},
    language = {English},
    title = {Mpp 3.5.0},
    publisher = {Karlsruhe Institute of Technology (KIT)},
    year = {2025}
}

@article{baumgarten2025budgeted,
    title = {A Budgeted Multi-Level {M}onte {C}arlo Method for Full Field Estimates of Multi-{PDE} Problems},
    author = {Niklas Baumgarten and Robert Kutri and Robert Scheichl},
    year = {2025},
    journal = {arXiv preprint arXiv:2506.01644},
    publisher = {arXiv},
    primaryClass = {math.NA},
}

\end{document}